\documentclass[11pt]{article}
\usepackage{graphicx}
\usepackage[top=1in,bottom=1in,left=1in,right=1in]{geometry}
\usepackage[sort&compress]{natbib} \setlength{\bibsep}{0.0pt}
\usepackage{amsfonts, times, amsmath, amssymb, amsthm, constants, bbm}
\usepackage{MnSymbol}
\usepackage{mathtools,caption}
\usepackage[mathscr]{eucal}
\usepackage[noline,ruled]{algorithm2e}

\usepackage{color,times,float}
\usepackage[inline]{enumitem}
\setlist[enumerate,1]{label=\textit{\alph*)}}
\definecolor{darkred}{RGB}{100,0,0}
\definecolor{darkgreen}{RGB}{0,100,0}
\definecolor{darkblue}{RGB}{0,0,150}
\newenvironment{CompactEnumerate}{
\begin{list}{\arabic{enumi}.}{%
\usecounter{enumi}
\setlength{\leftmargin}{4pt}
\setlength{\itemindent}{1  pt}
\setlength{\topsep}{-1pt}
\setlength{\itemsep}{-2pt}
}}
{\end{list}}
\usepackage{hyperref}
\hypersetup{colorlinks=true, linkcolor=darkred, citecolor=darkgreen, urlcolor=darkblue}

\usepackage{url}
\usepackage{enumitem}
\usepackage[nameinlink]{cleveref}

\def\fdr{\text{FDR}}
\def\fdp{\text{FDP}}
\def\tdp{\text{TDP}}
\def\tdr{\text{TDR}}


\newtheorem{thm}{Theorem}
\newtheorem{prp}{Proposition}
\newtheorem{lem}{Lemma}

\newtheorem{remark}{Remark}
\newtheorem{Def}{Definition}
\newtheorem{rem}{Remark}
\newtheorem{example}{Example}

\def\beq{\begin{equation*}} 
\def\eeq{\end{equation*}}
\def\beqn{\begin{eqnarray*}}
\def\eeqn{\end{eqnarray*}}
\def\Bitem{\begin{itemize}\setlength{\itemsep}{.2in}}
\def\bitem{\begin{itemize}\setlength{\itemsep}{.05in}}
\def\eitem{\end{itemize}}
\def\Benum{\begin{enumerate}\setlength{\itemsep}{.2in}}
\def\benum{\begin{enumerate}\setlength{\itemsep}{.05in}}
\def\eenum{\end{enumerate}}
\def\bmult{\begin{multline*}}
\def\emult{\end{multline*}}
\def\bcenter{\begin{center}}
\def\ecenter{\end{center}}
\def\bframe{\begin{frame}}
\def\eframe{\end{frame}}

\newcommand{\thmref}[1]{Theorem~\ref{thm:#1}}
\newcommand{\prpref}[1]{Proposition~\ref{prp:#1}}

\newcommand{\lemref}[1]{Lemma~\ref{lem:#1}}
\newcommand{\secref}[1]{Section~\ref{sec:#1}}
\newcommand{\figref}[1]{Figure~\ref{fig:#1}}






\def\cC{\mathcal{C}}

\def\cF{\mathcal{F}}
\def\cG{\mathcal{G}}
\def\cH{\mathcal{H}}

\def\cL{\mathcal{L}}

\def\cN{\mathcal{N}}

\def\cR{\mathcal{R}}

\def\cX{\mathcal{X}}

\def\DR{\mathrm{EDR}}

\def\bP{\mathbf{P}}

\def\bR{\mathbf{R}}

\def\bX{\mathbf{X}}




\newcommand{\E}{\operatorname{\mathbb{E}}}



\def\eps{\varepsilon}

\def\1{\mathbbm{1}}

\newcommand{\CGAI}{Contextual Generalized Alpha-Investing }
\newcommand{\CwGAI}{Context-weighted Generalized Alpha-Investing }
\newcommand{\LORD}{Level based On Recent Discovery }

\newcommand{\CwLORDplus}{context-weighted LORD++ }

\usepackage{caption}
\usepackage{subcaption}
\definecolor{purple}{rgb}{0.4,.1,.9}
\definecolor{new}{rgb}{0.5,0.1,0.1}




\pagestyle{myheadings}

\begin{document}

\thispagestyle{empty}

\title{Contextual Online False Discovery Rate Control}
 \author{Shiyun Chen\thanks{ Department of Mathematics, University of California, San Diego. \href{mailto:shc176@ucsd.edu}{shc176@ucsd.edu}. Part of the work done with the author was an intern at Amazon.}\and Shiva Kasiviswanathan\thanks{Amazon, Sunnyvale, USA. \href{mailto:kasivisw@gmail.com}{kasivisw@gmail.com}.}} 
\date{}
\maketitle
\begin{abstract}
Multiple hypothesis testing, a situation when we wish to consider many hypotheses, is a core problem in statistical inference that arises in almost every scientific field. In this setting, controlling the false discovery rate (FDR), which is the expected proportion of type I error, is an important challenge for making meaningful inferences. In this paper, we consider the problem of controlling FDR in an online manner. Concretely, we consider an ordered, possibly infinite, sequence of hypotheses, arriving one at each timestep, and for each hypothesis we observe a p-value along with a set of features specific to that hypothesis. The decision whether or not to reject the current hypothesis must be made immediately at each timestep, before the next hypothesis is observed. The model of multi-dimensional feature set provides a very general way of leveraging the auxiliary information in the data which helps in maximizing the number of discoveries.

We propose a new class of powerful online testing procedures, where the rejections thresholds (significance levels) are learnt sequentially by incorporating contextual information and previous results. We prove that any rule in this class controls online FDR under some standard assumptions. We then focus on a subclass of these procedures, based on weighting significance levels, to derive a practical algorithm that learns a parametric weight function in an online fashion to gain more discoveries. We also theoretically prove, in a stylized setting, that our proposed procedures would lead to an increase in the achieved statistical power over a popular online testing procedure proposed by~\citet{javanmard2018online}. Finally, we demonstrate the favorable performance of our procedure, by comparing it to state-of-the-art online multiple testing procedures, on both synthetic data and real data generated from different applications.
\end{abstract}

\newpage
\tableofcontents
\newpage

\section{Introduction} \label{sec:intro}
Multiple hypotheses testing - controlling overall error rates when performing simultaneous hypothesis tests - is a well-established area in statistics with applications in a variety of scientific disciplines~\citep{dudoit2007multiple, dickhaus2014simultaneous, roquain2011type}. This problem has become even more important with modern data science, where standard data pipelines involve performing a large number of hypotheses tests on complex datasets, e.g., does this change to my webpage improve my click-through rate, is this ad effective for this population of users, or is this gene mutation associated with certain trait? 

In hypothesis testing problems, each hypothesis is summarized to one p-value, which we use to decide whether to reject the null hypothesis, i.e., claim it as a non-null. Typically, a hypothesis is rejected if p-value is below some significance level. The rejected hypotheses are called {\em discoveries}, and the subset of these that were truly null but mistakenly rejected are called {\em false discoveries}. The {\em false discovery rate} (FDR) namely, the expected fraction of discoveries that are false positives is the criterion of choice for statistical inference in multiple hypothesis testing problems. The traditional multiple testing research has focused on the offline setting, which means we have an entire batch of hypotheses and the corresponding p-values. In their seminal work,~\citet{benjamini1995controlling} developed a procedure (called {\em BH procedure}) to control FDR below a preassigned level for this offline setting.
However, the fact that offline FDR control techniques require aggregating p-values from all the tests and processing them jointly, makes it impossible to utilize them for a number of applications which are best modeled as an {\em online hypothesis testing} problem~\citep{foster2008alpha} (a more formal definition will be provided later).
In this scenario, we assume that an infinite sequence of hypotheses arrive sequentially in a stream, and decisions are made only based on previous results before next hypothesis arrives. In other words, the decisions have to be made without access to the number of hypotheses in the stream or the future p-values, but solely based on the previous decisions. For example, in marketing research a sequence of A/B tests can be carried out in an online fashion, or in a pharmaceutical drug test a sequence of clinical trial are conducted over time, or with publicly available datasets where new hypotheses are tested in an on-going fashion by different researchers.


\citet{foster2008alpha} designed the first online alpha-investing procedures that use and earn alpha-wealth in order to control a modified definition of FDR (referred to as {\em mFDR}), which was later extended to a class of {\em generalized alpha-investing} (GAI) rules by~\cite{aharoni2014generalized}. ~\citet{javanmard2015online,javanmard2018online} showed that monotone GAI rules, appropriately parametrized, can control the FDR
for independent p-values as opposed to the modified FDR controlled in~\citep{foster2008alpha,aharoni2014generalized}. Within this class, of special note here is an procedure (testing rule) called {\em LORD} that the authors noted performs consistently well in practice.~\citet{ramdas2017online} demonstrated a modification to the GAI class (called GAI++) that improved the earlier GAI algorithms (uniformly), and the improved LORD++ method arguably represents the current state-of-the-art in online multiple hypothesis testing. Very recently,~\citep{ramdas2018saffron} empirically demonstrated that using adaptiveness, some further improvements in the power over LORD++ can be obtained. We survey these online testing procedures in more detail in Section~\ref{sec:onlineFDR}.

All these above online testing procedures take p-values as input and decide at each time based on previous decision outcomes. However, these procedures ignore additional information that is often available
in modern applications. In addition to the p-value $P_i$, each hypothesis $H_i$ could also have a feature vector $X_i$ lying in some space $\cX \subseteq \mathbb{R}^d$. The feature vector encode auxiliary\footnote{Also sometimes referred to as {\em prior} or {\em side} information.} information related to the tested hypothesis, and is also often referred as {\em context}. The feature vector $X_i$ only carry indirect information about the likelihood of the hypothesis $H_i$ to be false but the relationship is not fully known ahead of time. For example, when conducting an A/B test for a logo size change in a website, contextual information such as text, layouts, images and colors in this specific page can be useful in making a more informative decision.  Similarly another example arises when testing whether a mutation is correlated with the trait, here contextual information about both the mutation and the trait such as its location, epigenetic status, etc.,  could provide valuable information that can increase the power of these tests.

While the problem of using auxiliary information in testing has been considered in offline setting~\citep{ignatiadis2016data, genovese2006false, li2016multiple, ramdas2017unified, xia2017neuralfdr, lei2018adapt}, in this paper we focus on the more natural online setting, where unlike the batch setting p-values and contextual features are not available at the onset, and a decision about a hypothesis should be made when it is presented. To the best of our knowledge, this generalization of the online testing problem has not been considered before. Our main contributions in this paper are as follows.
\begin{list}{{\bf (\arabic{enumi})}}{\usecounter{enumi}
\setlength{\leftmargin}{20pt}
\setlength{\listparindent}{\parindent}
\setlength{\parsep}{0pt}}
\item \textbf{Incorporating Contextual Information.} Building on GAI/GAI++ rules~\citep{aharoni2014generalized,ramdas2017online}, we propose a new broad class of powerful online testing rules, that we refer to as {\em contextual generalized alpha-investing} (CGAI) rules. This new class of rules incorporates the available contextual features in the testing process, and we prove that any monotone rule from this class can control online FDR under some standard assumption on p-values. More formally, we assume each hypothesis $H$ is characterized by a tuple $(P,X)$ where $P \in (0, 1)$ is the p-value, $X$ is the contextual feature vector. We use a very general model for these $X$'s, as a vector coming from some generic space $\cX \subseteq \mathbb{R}^d$. We consider a sequence of hypotheses $(H_1,H_2,\dots)$ that arrive sequentially in a stream one at each timestep $t=1,2,\dots$, with corresponding $((P_1,X_1),(P_2,X_2),\dots)$, our testing rule generates a sequence of significance levels $(\alpha_1,\alpha_2,\dots)$ at each timestep based on previous decisions and contextual information seen so far. Our decision rule for each hypothesis $H_t$ takes the form $\1 \{P_t \leq \alpha_t\}$, and under mutual independence of the p-values and independence between the p-values $P_t$'s and the contextual features $X_t$'s for null hypotheses, we show that any monotone rule from this class ensures that FDR remains below a preassigned level at any time. In the proof the sigma-field formalizes ``what we know'' at time $t$, and we define a filtration via the sigma-fields of both previous decisions and the contextual features. We also show that we can have mFDR control under a weaker assumption on p-values.

\item \textbf{Context Weighting.} While contextual generalized alpha-investing (CGAI) is a rich class of FDR control rules, we focus on a subclass of these rules for designing a practical online FDR control procedure and to compare the statistical power of various procedures. In particular, we focus on a subclass, that we refer to as {\em context-weighted generalized alpha-investing} (CwGAI) rules, where the contextual features are used for weighting the significance levels in an online setting. In particular, we take a parametric function $\omega(;\theta)$ for a set of parameters $\theta$, and at timestep $t$ use $\omega(X_t;\theta)$ as a real-valued weight on $\alpha_t$ generated through GAI rules, with the intuition that larger weights should reflect an increased willingness to reject the null. Since the parameter set $\theta$ is unknown, a natural idea here will be to learn it in an online fashion to maximize the number of empirical discoveries. 
This gives rise to a new class of online testing rules that incorporates the context weights through a learnt parametric function.

\item \textbf{Statistical Power Analysis.} Having established that our procedures control the false discovery rate, we next turn to the question of what effect context weighting has on the statistical power in an online setting. Lots of factors, such as frequency of true hypotheses, or on their order, affect the power. Hence, to make a rigorous comparison, we consider a standard mixture model used in such analyses where each null hypothesis is false with a fixed (unknown) probability~\citep{genovese2006false,javanmard2018online,lei2018adapt}, and focus on a slightly weaker variant of the popular online testing LORD procedure~\citep{javanmard2018online}. By considering a very general model of weighting where weights are random variables, and under the assumption that weights are positively associated with the null hypotheses being false, we derive a natural sufficient condition on the weights under which weighting improves the power in an online setting, while still guaranteeing FDR control. This is the first result that demonstrates the benefits of appropriate weighting in the online setting. Prior to this such results were only known in the offline setting~\citep{genovese2006false}.

\item \textbf{A Practical Procedure.} To design a practical online FDR control procedure with good performance, we model the parametric function $\omega(;\theta)$ as a neural network (multilayer perceptron), and train it in an online fashion to maximize the number of empirical discoveries. Our experimental evidence on a range of synthetic and real datasets show that our proposed procedure makes substantially more correct decisions compared to state-of-the-art  online testing procedures.
\end{list}



We formally describe the online multiple testing problem setup and review the published literature in this area in Section~\ref{sec:onlineFDR}. Here we start with a review of some relevant prior work in offline multiple testing.

\subsection{Related Work in the Offline Setting} 
In the offline setting, where we have access to the entire batch of p-values at one time instant, a number of procedures have been proposed to take advantage of the available auxiliary information to increase the power of test (to make more true discoveries). As we note below, the modeling of auxiliary information varies.

\citet{storey2002direct} proposed an adaptive FDR-control procedure based on estimating the proportion of true nulls from data. Reweighting the p-values by applying priors was considered by~\citep{benjamini1997multiple, genovese2006false, dobriban2016general, dobriban2015optimal}. In scenarios where priors are about spatial or temporal structure on hypotheses, \textit{Independent Hypothesis Weighting} procedure was proposed by \citet{ignatiadis2016data}, which clusters similar hypotheses into groups and assigns different weights to  these groups. \citet{hu2010false} utilized the idea of both grouping and estimating the true null proportions within each group. Some more procedures in \citep{barber2015controlling, g2016sequential, li2016accumulation, lei2016power} incorporate a prior ordering as the auxiliary information to focus on more promising hypotheses near the top of the ordering. This motivation underlies also the first online multiple testing paper of~\citep{foster2008alpha}.

{\em Structure-adaptive BH algorithm} (SABHA)~\citep{li2016multiple} and  {\em Adaptive p-value Thresholding} (AdaPT)~\citep{lei2018adapt} are two recent FDR control adaptive methods which derive the feature vector dependent decision rules. SABHA first censors the p-values below a fixed level, and then uses the censored p-values to estimate the non-null proportion (using non-parametric methods in practice), and then applies the weighted BH procedure of~\citep{genovese2006false}. AdaPT is based on adaptively estimating a Bayes-optimal p-value rejection threshold. At each iteration of AdaPT, an analyst proposes a significance threshold and observes partially censored p-values, then estimates the false discovery proportion (FDP) below the threshold, and proposes another threshold, until the estimated FDP is below the desired level. 

The offline testing algorithm mostly related to our results is the {\em NeuralFDR} procedure proposed by \cite{xia2017neuralfdr}, which uses a neural network to parametrize the decision rule. This procedure in the offline setting, with  access to all the p-values and the contextual features, comes up with a single decision rule $t(X)$ based on training a neural network for optimizing on the number of discoveries. In contrast, our method is in online multiple testing setup where we do not know all the p-values or the contextual features at once, and decision rules are different at each time, and varies as a function of previous outcomes and features. 

\subsection{Notation and Organization}
We denote $[n] = \{1,\dots, n\}$. Vectors are denoted by boldface letters. Given a sequence $(\Gamma_i)_{i \in \mathbb{N}}$, we denote by $\Gamma(n) = \sum_{i=1}^n \Gamma_i$ its partial sum.  
$\text{Bernoulli}(\cdot)$ and $\text{Uniform}(\cdot)$, with appropriate parameters, represent random variables drawn from the standard Bernoulli and uniform distributions respectively. $\cN(\mu,\sigma^2)$ denotes a random variable from normal distribution with mean $\mu$ and variance $\sigma^2$.

\paragraph{Organization.} The rest of the paper is organized as follows. In~\secref{onlineFDR}, we  introduce the online multiple testing problem and survey some of the prior work in this area.  In~\secref{CWFDR}, we propose a new  broad class of powerful testing procedures which can incorporate contextual (side) information of the hypothesis, and present our theoretical guarantee of online FDR control for this class.  In~\secref{weighted-rule}, we focus on a subclass of this above broad class, which are based on converting contextual features into weights, and using that to reweight the significance levels. In~\secref{power}, we theoretically show the increase in power those can be obtained by using weighted procedures in the online setting.  In~\secref{algorithm}, we design a practical algorithm for online multiple testing with contextual information,  and demonstrate its performance on synthetic and real datasets.


\section{Online False Discovery Rate Control: A Survey} \label{sec:onlineFDR}
We start with a review of the relevant notions in online multiple testing. The  online model was first introduced by~\citep{foster2008alpha}. Here we want to test an ordered (possibly infinite) sequence of hypotheses arriving in a stream, denoted by $\cH = (H_1, H_2, H_3, \dots, $), where at each timestep $t$ we have to decide whether to reject $H_t$ having only access to previous information. As is standard notation, $H_t \in \{0,1\}$ indicates if hypothesis $t$ is {\em null} ($H_t = 0$) or {\em alternative} ($H_t = 1$). Each hypothesis is associated with a p-value $P_t$. The results in this paper do not depend on the actual test used for generating the p-value. By definition of a {\em valid} p-value, if the hypothesis $H_t$ is {\em truly null}, then the corresponding p-value ($P_t$) is stochastically larger than the uniform distribution, i.e., 
\begin{align} \label{eqn:superp}
\mbox{\textbf{Super-uniformity of $P_t$:} If $H_t=0$ ($H_t$ is null) then } \Pr[P_t \leq u] \leq u, \,\, \mbox{ for all } u \in [0,1].
\end{align} 
No assumptions are made on the marginal distribution of the p-values for hypotheses that are non-nulls (alternatives). Although they can be arbitrary, they should be stochastically smaller than the uniform distribution, since only then do they carry signal that differentiates them from nulls. Let $\cH^0$ be the set $\{t \mid H_t = 0\}$ of indices of true null hypotheses in $\cH$ and let $\cH^1$ be the set $\{t \mid H_t = 1\}$ of remaining hypotheses in $\cH$.

An online multiple testing procedure is defined as a {\em decision rule} which provides a sequence of significance level $\{ \alpha_t \}$ and makes the following decisions:
\begin{align} \label{eqn:rts}
R_t = \1 \{P_t \le \alpha_t\} = \begin{cases} 
	1 & P_t \le \alpha_t \quad\;\;\;\, \Rightarrow \text{reject } H_t,\\ 
	0 & \text{otherwise} \quad \Rightarrow \text{accept } H_t.
\end{cases}
\end{align}
A rejection of the null hypothesis $H_t$ indicated by the event $R_t=1$ is also referred as discovery.

Let us start by defining the false discovery rate ($\fdr$), and true discovery rate ($\tdr$) formally in the online setting. For any time $T$, denote the first $T$ hypotheses in the stream by $\cH(T) = (H_1, \dots, H_T)$. Let $R(T) = \sum_{t= 1}^{T}R_t$ be the total number of discoveries (rejections) made by the online testing procedure till time $T$, and let $V(T) = \sum_{t \in \cH^0} R_t $ be the number of false discoveries (rejections) in them. Then online false discovery proportion (denoted as $\fdp$)  and the corresponding online false discovery rate (denoted as $\fdr$) till time $T$ are defined as:
\beq
\fdp(T) := \frac{V(T)}{R(T) \vee 1},  \quad  \quad  \fdr(T) := \E[\fdp(T)].
\eeq
Here, $R(T) \vee 1$ is the shorthand for $\max\{R(T),1\}$. The expectation is over the underlying randomness. The false discovery rate is the expected proportion of false discoveries among the rejected hypotheses. As mentioned earlier, this criterion was first introduced by~\citep{benjamini1995controlling} in the offline multiple testing scenario. Similarly, let $S (T) = \sum_{t \in \cH^1} R_t$ be the number of true discoveries (rejections) made by the online testing procedure till time $T$, and let $N_1(T)$ be the number of true alternatives (non-nulls).  Then online true discovery proportion (denoted as $\tdp$) and  online true discovery rate (denoted as $\tdr$) till time $T$ are defined as:
\beq
\tdp(T) := \frac{S(T)}{N_1(T) \vee 1}, \quad  \quad  \tdr(T) := \E[\tdp(T)].
\eeq
The true discovery rate is also referred as to as {\em power}.

In the online setting, the question that arises is how can we generate a sequence of $(\alpha_1,\alpha_2,\dots,\alpha_t,\dots)$ such that we have control over the false discovery rate ($\fdr$). Formally, the goal is to figure out a  sequence of significance levels $(\alpha_t)_{t \in \mathbb{N}}$ such that the $\fdr$ can be controlled under a given level $\alpha$ at any time $T \in \mathbb{N}$, i.e.,
\beq \label{goal}
\sup_{T} \; \fdr(T) \le \alpha.
\eeq
Note that none of these above four metrics (FDP, FDR, TDP, or TDR) can be computed without the underlying true labels (ground truth). 

A variant of FDR that arose from early works~\citep{foster2008alpha} on online multiple hypothesis testing is that of {\em marginal FDR} (denoted as mFDR), defined as:
$$\text{mFDR(T)}_{\eta} = \frac{\E[V(T)]}{\E[R(T)] + \eta}.$$
A special case of $\text{mFDR(T)} = \frac{\E[V(T)]}{\E[R(T)] + 1}$ is when $\eta = 1$. $\text{mFDR}$ measures the ratio of expected number of false discoveries to expected number of discoveries. Informally, while FDR controls a property of the realized set of tests, $\text{mFDR}$ is the ratio of the two expectations over many realizations. In general, the gap between FDR and mFDR can be very significant, and in particular controlling mFDR does not ensure controlling FDR at a similar level~\citep{javanmard2018online}. We will also provide a theoretical guarantee on mFDR control in a contextual setting under some weaker assumptions on p-values.

\paragraph{Generalized Alpha-Investing Rules.}\citet{foster2008alpha} proposed the first class of online multiple testing rules (referred to as alpha-investing rules) to control mFDR (under some technical assumptions).~\citet{aharoni2014generalized} further extended this class to generalized alpha-investing (GAI) rules, again to control mFDR. The general idea behind these rules is that you start with an initial wealth, which is under the desired control level. Some fraction of the wealth is spent (i.e., wealth decreases) for testing each hypothesis. However, each time a discovery occurs, a reward is earned, i.e., wealth increases for further tests. Building on these results,~\citet{javanmard2015online} demonstrated that monotone GAI rules, appropriately parameterized, can control the (unmodified) FDR for independent p-values. We now introduce the GAI rules conceptualized in these works.

Given a sequence of input p-values $(P_1, P_2,\dots )$, a generalized alpha-investing rule generates a sequence of significance levels $(\alpha_1,\alpha_2,\dots)$, which is then used for generating the sequence of decisions $R_t$'s as in~\eqref{eqn:rts}. In a GAI rule, the significance levels at time $t$ is a function of prior decisions (till time $t-1$):
\beq
\alpha_t = \alpha_t(R_1, \dots, R_{t-1}).
\eeq
In particular, this means that $\alpha_t$ does not directly depend on the observed p-values but only on past decisions. Let $\cF^{t} = \sigma (R_1, \dots, R_t)$ denote the sigma-field of decisions till time $t$. In GAI rules, we insist that $\alpha_t \in \cF^{t-1}$. 

Any GAI rule begins with an wealth of $W(0) > 0$, and keeps track of the wealth $W(t)$ available after $t$ testing steps. Formally, a generalized alpha-investing rule is specified by three (sequences of) positive functions $\alpha_t,\phi_t,\psi_t \in \cF^{t-1}$. Here, $\phi_t$ is the penalty of testing a new hypothesis $H_t$, and $\psi_t$ is the reward for making a rejection (discovery) at time $t$. In other words, a part of the wealth is used to test the $t$th hypothesis at level  $\alpha_t$, and the wealth is immediately decreased by an amount $\phi_t$. If the $t$th hypothesis is rejected, that is if $R_t= 1$, then an extra wealth equaling an amount $\psi_t$ is added to the current wealth. This can be explicitly stated as:
\begin{align}
	& W(0) = w_0, \\
	& W(t) = W(t - 1) - \phi_t + R_t \cdot \psi_t,
\end{align}
where $0 < w_0 < \alpha$ is the initial wealth. The parameters $w_0$ and the nonnegative sequences $\alpha_t,\phi_t,\psi_t$ are all user-defined. A requirement is that the total wealth $W(t)$ is always non-negative, and hence $\phi_t \le W(t-1)$. If the wealth ever equals zero, the procedure is not allowed to reject any more hypotheses since it has to set $\alpha_t=0$ from then on. An additional restriction is needed from the goal to control FDR, in that whenever a rejection takes place, $\psi_t$ should be bounded. Formally, this constraint is defined as:
\begin{equation} \label{eqn:psi}
	\psi_t \le \min \{\phi_t + b_t, \frac{\phi_t}{\alpha_t} + b_t -1\}.
\end{equation}
\citet{javanmard2015online} defined $b_t$ as a user-defined constant $B_0 > 0$, setting it to $\alpha - w_0$. Recently,~\citet{ramdas2017online} demonstrated that setting $$b_t = \alpha - w_0 \1 \{\rho_1 > t-1\}$$ could potentially lead to larger statistical power. Here, for a positive integer $k$, 
$$\rho_k := \min_{i \in \mathbb{N}} \, \{\sum_{t=1}^i R_t = k\},$$ 
is the time of $k$th rejection (discovery).~\citet{ramdas2017online} refer to this particular setting of $b_t$ as GAI++ rules. Unless otherwise specified, we use this improved setting of $b_t$ throughout this paper.  

Another important property when dealing with GAI rules is that of monotonicity. As studied by~\citep{javanmard2015online, javanmard2018online} in the context of GAI rules, and as is predominantly the case in offline multiple testing, monotonicity is the following condition:
\begin{align} \label{eqn:mono}
\mbox{if $\tilde{R_i} \le R_i$ for all $i \le t-1$ then } \;\; \alpha_t(\tilde{R}_1, \dots, \tilde{R}_{t-1}) \le \alpha_t (R_1, \dots, R_{t-1}).
\end{align}

While these online FDR procedures are widely used, a major shortcoming of them is that they ignore additional information that is often available during testing. Each hypothesis, in addition to the p-value, could have a feature vector which encodes contextual information related to the tested hypothesis. For example, in genetic association studies, each hypothesis tests the correlation between a variant and the trait. We have a rich set of features for each variant (e.g., its location, conservation, epigenetics, etc.) which could inform how likely the variant is to have a true association.

In this paper, we introduce the problem contextual online multiple testing, which captures the presence of this auxiliary information in modern applications.

\paragraph{\LORD (LORD) Rules.} One specific subclass of GAI rules (proposed by~\citep{javanmard2015online,javanmard2018online}) that arguably is state-of-the-art in online multiple hypothesis testing and performs consistently well in practice is known as \textit{\LORD} (LORD). We will focus on weighted variants of LORD later in this paper when we discuss about statistical power of tests. In fact, we will consider the recently improved LORD++ rules (proposed by~\citep{ramdas2017online}) that achieves the same or better statistical power than the LORD rules (uniformly).

The idea behind LORD (and LORD++) rules is that the significance level $\alpha_t $ is a function based only on {\em most recent discovery time}. Formally, we start with any sequence of nonnegative numbers $\gamma = (\gamma_t)_{t = 1} ^ {\infty}$, which is monotonically non-increasing  with $\sum_{t = 1}^{\infty} \gamma_t = 1$. At each time $t$, let $\tau_t$ be the last time a discovery was made before $t$, i.e., 
$$\tau_t := \max \{i \in \{1,\dots, t-1\}: R_i = 1 \},$$
with $\tau_t =0$ for all $t$ before the first discovery. The LORD rule defines $\alpha_t,\phi_t,\psi_t$ in the following generalized alpha-investing fashion.\footnote{Note that~\citet{javanmard2018online} defined three versions of LORD that slightly vary in how they set the significance levels. In this paper, we stick to one version, though much of the discussion in this paper also holds for the other versions.}

\noindent\fbox{%
\parbox{\textwidth}{%
\LORD (LORD)~\citep{javanmard2018online,javanmard2015online}:
\begin{eqnarray} 
	& W(0) = w_0,& \nonumber \\ 
	& \phi_t = \alpha_t = \begin{cases} 
	\gamma_{t} w_0 & \mbox{if $t \leq \rho_1$} \\ 
	\gamma_{t - \tau_t} b_0 & \mbox{if $t > \rho_1$},
\end{cases}&\nonumber \\
	& \psi_t = b_0,& \nonumber\\
	& b_0 = \alpha - w_0.& \nonumber
\end{eqnarray}
}}
Typically, we will set $w_0 = \alpha / 2$, in which case, the above rule could be simplified as $\phi_t = \alpha_t = \gamma_{t - \tau_t} b_0 = \gamma_{t - \tau_t} \alpha/2$. 

As with any GAI rule,~\citep{ramdas2017online} showed that one could replace $b_0$ with $b_t = \alpha - w_0 \1 \{\rho_1 > t-1\}$ to achieve potentially better power, while still achieving online FDR control at level $\alpha$. With this replacement, we defined LORD++ as follows.

\noindent\fbox{%
\parbox{\textwidth}{%
Improved \LORD (LORD++)~\citep{ramdas2017online}:
\begin{eqnarray} 
	& W(0) = w_0 \ge \alpha/2,& \nonumber\\ 
	& \phi_t = \alpha_t = \gamma_{t - \tau_t} b_t,& \nonumber \\
	& \psi_t = b_t =  \alpha - w_0 \1 \{\rho_1 > t-1\}.& \nonumber
\end{eqnarray}
}}

It can be easily observed that both LORD and LORD++ rules satisfy the monotonicity condition from~\eqref{eqn:mono}.

\paragraph{SAFFRON Procedure.}This is a very recently proposed online FDR control procedure by~\citet{ramdas2018saffron}. The main difference between SAFFRON (Serial estimate of the Alpha Fraction that is Futilely Rationed On true Null hypotheses) and the previously discussed LORD/LORD++ procedures comes in that SAFFRON is an adaptive method, based on adaptively estimating the proportion of true nulls. SAFFRON can be viewed as an online extension of Storey's adaptive version of BH procedure in the offline setting. SAFFRON does not belong to the GAI class, whose extension to the contextual online setting is the main focus of this paper. See Appendix~\ref{app:saffron} for more details about SAFFRON, and the experiments with SAFFRON that suggests that contextual information could potentially help here too.


\section{Contextual Online FDR Control} \label{sec:CWFDR}
We start with an informal definition of the contextual online multiple testing problem. Consider a setting where we test an ordered (possibly infinite) sequence of null hypotheses, denoted $\cH = (H_1, H_2, H_3, \dots, $).  Each hypothesis $H_t$ is associated with a p-value $P_t \in (0,1)$  and a vector of contextual features $X_t \in \cX \subseteq \mathbb{R}^d$, thus can be represented by a tuple $(H_t, P_t, X_t)$. We observe $P_t$ and $X_t$, but do not know $H_t$. The goal of contextual online testing, is at each step $t$, decide whether to reject $H_t$ having only access to previous decisions and contextual information seen so far. The overall goal is to control online FDR under a given level $\alpha$ at any time and improve the number of useful discoveries by using the contextual information. All missing details in this section are collected in Appendix~\ref{app:CWFDR}.

Under the alternative, we denote the density distribution (PDF) of p-values as $f_1(p\mid X)$ (for $X \in \cX$), and the cumulative distribution (CDF) of p-values as $F_1(p\mid X)$. Here $f_1(p\mid X)$ can be any {\em arbitrary unknown} function, as long as the p-values are stochastically smaller than those under the null. Note that $f_1(p\mid X)$ is not identifiable from the data as we never observe $H_t$'s directly. This can be illustrated through a simple example described in Appendix~\ref{app:CWFDR}. Let us now formally define the contextual online FDR control problem.
\begin{Def} [Contextual Online FDR Control Problem]
Given a (possibly infinite) sequence of $(P_t,X_t)$'s ($t \in \mathbb{N}$) where $P_t \in (0,1)$ and $X_t \in \cX$, the goal is to generate a significance levels $\alpha_t's$ as a function of prior decisions and contextual features
\beq
\alpha_t = \alpha_t(R_1, \dots, R_{t-1}, X_1, \dots, X_{t}),
\eeq
and a corresponding set of decisions
\beq 
R_t = \begin{cases} 
1 & P_t \le \alpha_t (R_1, \dots, R_{t-1}, X_1, \dots, X_t), \\ 
0 & \text{otherwise.}
\end{cases}
\eeq
such that $\sup_{T} \; \fdr(T) \le \alpha$.
\end{Def}
A related definition would be to control mFDR (instead of FDR) under level $\alpha$. Note that in the contextual setting, we consider the significance levels to be functions of prior decisions and contextual features seen so far. This differs from existing online multiple testing rules described in the previous section where the significance levels ($\alpha_t$'s) depend only on prior results ($\alpha_t \in \cF^{t-1}$), i.e., does not use any contextual information. 

In the presence of contextual information, we use the sigma-field of decisions till time $t$ as $\cF^{t} = \sigma (R_1, \dots, R_t)$, and the sigma-field of features till time $t$ as $\cG^t = \sigma(X_1, \dots, X_t)$. Our first contribution in this paper is to define a contextual extension of GAI rules, that we refer to as \textit{\CGAI} (contextual GAI or CGAI) rules. A contextual GAI rule is defined through three functions, $\alpha_t, \phi_t, \psi_t \in \sigma(\cF^{t-1} \cup \cG^t)$, that are all computable at time $t$. 

A valid contextual GAI rule is required to satisfy the following conditions:

\noindent\fbox{%
\parbox{\textwidth}{%
Contextual GAI:
\begin{eqnarray}
	& \textbf{Initial wealth: } W(0) = w_0, \text{with $0 < w_0 < \alpha$}, \label{eq1}& \\
	& \textbf{Wealth Update: } W(t) = W(t - 1) - \phi_t + R_t \cdot \psi_t, \label{eq2}&\\
	& \textbf{Non-negativity: } \phi_t \le W(t-1),\label{eq3}& \\
	& \textbf{Upper bound on reward: } \psi_t \le \min \{\phi_t + b_t, \frac{\phi_t}{\alpha_t} + b_t -1\}. \label{eq4}&
\end{eqnarray}
We set $b_t = \alpha - w_0 \1 \{\rho_1 > t-1\}$ as proposed by~\citep{ramdas2017online}. 
}}
Also as with GAI rules, we can add a monotonicity property to a contextual GAI rule as follows:
\begin{multline} \label{eq5}
\textbf{Monotoncity: } \text{If $\tilde{R_i} \le R_i$ for all $i \le t-1$ then }\\ \alpha_t(\tilde{R}_1, \dots, \tilde{R}_{t-1}, X_1, \dots, X_t) \le \alpha_t (R_1, \dots, R_{t-1}, X_1, \dots, X_t), \mbox{ for any fixed $\bX^t = (X_1, \dots, X_t)$}.
\end{multline}
A contextual GAI rule satisfying the monotonicity condition is referred to as {\em monotone contextual GAI}.

Our first result establishes the online FDR control for any monotone contextual GAI rule under independence of p-values, and the mutual independence of p-values and contextual features under the null.\!\footnote{Note that a standard assumption in hypothesis testing is that the p-values under the null are uniformly distributed in $(0,1)$, which implies the mutual independence of p-values and contextual features under the null.} 

We start by presenting the following lemma, which is an intermediate result for the proof of FDR later. Note that $R_t = \1 \{P_t \le \alpha_t\}$, where $\alpha_t = \alpha_t(R_1, \dots, R_{t-1}, X_1, \dots, X_t)$ is $\sigma(\cF^{t-1} \cup \cG^t)$-measurable and is a coordinatewise non-decreasing function of $R_1,\dots, R_{t-1}$ for any fixed $\bX^t = (X_1, \dots, X_t)$. Due to the super-uniformity of p-values under the null~\eqref{eqn:superp}, we immediately have that for independent p-values under the null the following conditional super-uniformity condition holds. 
\begin{align} \label{eqn:super-uniformity}
 \Pr[P_t \le \alpha_t  \mid \sigma(\cF^{t-1} \cup \cG^t) ] \le \alpha_t.
\end{align}
\lemref{super-uniform} states a more general result about super-uniformity of independent p-values under the null.  These independence assumptions are standard in multiple testing literature (see, e.g.,~\cite{ramdas2017online,javanmard2018online,xia2017neuralfdr} among others). The proof is based on a {\em leave-one-out} technique, a variant of which was also used by~\citet{ramdas2017online} (and also by~\citet{javanmard2018online}) in their analyses. The main distinction for us comes in that we consider the sigma-field at each time $t$ as $\sigma(\cF^{t-1} \cup \cG^t)$ including the information of contextual features till time $t$, instead of just $\cF^{t-1}$.
\begin{lem}[Super-uniformity]\label{lem:super-uniform}
	Let $g: \{0, 1\}^{T} \to \mathbb{R}$ be any coordinatewise non-decreasing function such that $g(\bR) > 0$ for any vector $\bR \neq (0, \dots, 0)$. Then for any index $t \le T$ such that $t \in \cH^0$, we have
	\begin{align*}
		&\E \bigg [\frac{\1 \{P_t \le \alpha_t(R_1, \dots, R_{t-1}, X_1, \dots, X_t)\}}{g(R_1, \dots, R_T) \vee 1}  \bigg \vert \sigma(\cF^{t-1} \cup \cG^t) \bigg ] \le \E \bigg [ \frac{\alpha_t (R_1, \dots, R_{t-1}, X_1, \dots, X_T)}{g(R_1, \dots, R_T) \vee 1} \bigg \vert \sigma(\cF^{t-1} \cup \cG^t) \bigg ].
	\end{align*}
\end{lem}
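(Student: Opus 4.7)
The plan is to apply a leave-one-out coupling at time $t$ combined with the super-uniformity of null p-values. Define a surrogate decision vector $\tilde{\bR} = (\tilde{R}_1, \ldots, \tilde{R}_T)$ by $\tilde{R}_i = R_i$ for $i < t$, $\tilde{R}_t = 1$, and for $s > t$, $\tilde{R}_s = \1\{P_s \le \tilde{\alpha}_s\}$ where $\tilde{\alpha}_s := \alpha_s(\tilde{R}_1, \ldots, \tilde{R}_{s-1}, X_1, \ldots, X_s)$. By construction $\tilde{\bR}$ is a measurable function of $(R_1, \ldots, R_{t-1}, X_1, \ldots, X_T, P_{t+1}, \ldots, P_T)$ and does not depend on $P_t$. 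Two easy properties follow. First, by the contextual GAI monotonicity condition~\eqref{eq5} applied inductively on $s = t, t+1, \ldots, T$ (with base case $\tilde{R}_t = 1 \ge R_t$), one obtains $\tilde{\alpha}_s \ge \alpha_s(R_1,\ldots,R_{s-1},X_1,\ldots,X_s)$ and hence $\tilde{R}_s \ge R_s$, so coordinatewise non-decreasingness of $g$ gives $g(\tilde{\bR}) \ge g(\bR)$. Second, on the event $\{P_t \le \alpha_t\}$ we have $R_t = 1 = \tilde{R}_t$, the inductive equality propagates through both rollouts, the full vectors agree, and therefore
\begin{align*}
\frac{\1\{P_t \le \alpha_t\}}{g(R_1, \ldots, R_T) \vee 1} \;=\; \frac{\1\{P_t \le \alpha_t\}}{g(\tilde{\bR}) \vee 1}.
\end{align*}

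Next, enlarge the conditioning sigma-field to $\mathcal{J}^t := \sigma\bigl(\cF^{t-1} \cup \cG^T \cup \sigma(P_{t+1},\ldots,P_T)\bigr)$; by construction both $\alpha_t$ and $g(\tilde{\bR})$ are $\mathcal{J}^t$-measurable. Under the mutual independence of the p-values together with the independence of $P_t$ from the contextual features when $t \in \cH^0$, the null p-value $P_t$ is independent of $\mathcal{J}^t$ and remains super-uniform conditionally, giving
\begin{align*}
\E\!\left[\frac{\1\{P_t \le \alpha_t\}}{g(\tilde{\bR}) \vee 1}\;\bigg|\;\mathcal{J}^t\right] \;=\; \frac{\Pr[P_t \le \alpha_t \mid \mathcal{J}^t]}{g(\tilde{\bR}) \vee 1} \;\le\; \frac{\alpha_t}{g(\tilde{\bR}) \vee 1}.
\end{align*}
Applying the tower property to descend from $\mathcal{J}^t$ back to $\sigma(\cF^{t-1}\cup\cG^t) \subseteq \mathcal{J}^t$, and then using $g(\tilde{\bR}) \vee 1 \ge g(\bR) \vee 1$ to replace the denominator on the right, yields the claimed inequality.

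The main obstacle is the independence bookkeeping in the super-uniformity step: one must verify that $P_t$ is genuinely independent of $\mathcal{J}^t$ for $t \in \cH^0$, which uses the full strength of the stated assumptions, namely mutual independence among p-values and independence of each null p-value from the entire context sequence (these together imply $P_t$ is also independent of $R_1,\ldots,R_{t-1}$, since each $R_i$ is a function of $P_1,\ldots,P_{i-1},X_1,\ldots,X_i$). Everything else is a routine adaptation of the non-contextual leave-one-out argument used in~\citet{ramdas2017online}, with the enriched filtration $\sigma(\cF^{t-1} \cup \cG^t)$ standing in for $\cF^{t-1}$.
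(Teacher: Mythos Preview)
Your proposal is correct and follows essentially the same leave-one-out argument as the paper: define a surrogate decision vector that forces a rejection at time $t$ (the paper does this by setting $P_t=0$, you do it by setting $\tilde{R}_t=1$ directly---these produce the same $\tilde{\bR}$), show it coincides with $\bR$ on the event $\{P_t\le\alpha_t\}$, use super-uniformity of the null $P_t$ to bound the indicator, and then use monotonicity to pass from $g(\tilde{\bR})$ back to $g(\bR)$. Your explicit enlargement to $\mathcal{J}^t$ and subsequent tower-property step is a slightly more transparent way of carrying out the conditioning than the paper's one-line ``taking expectation with respect to $P_t$,'' but the substance is identical.
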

The following theorem establishes the FDR property on any monotone contextual GAI rule under some independence assumptions on the p-values. Note that, as mentioned above, the p-values ($P_t$) could be arbitrary related to the contextual features ($X_t$) under the alternative (when $H_t=1$). The proof considers two cases: (a) $H_t=0$ in which case it uses the super-uniformity statement established in~\lemref{super-uniform} and the fact that $\psi_t \le \phi_t/\alpha_t + b_t -1$ (by definition), and (b) $H_t=1$ in which case we use the fact that $\psi_t \le \phi_t + b_t$ (by definition). 
\begin{thm} \label{thm:fdr-control} [FDR Control]
Consider a sequence of $((P_t,X_t))_{t \in \mathbb{N}}$  of p-values and contextual features. If the p-values $P_t$'s are independent, and additionally $P_t$ are independent of all $(X_t)_{t \in \mathbb{N}}$ under the null $($whenever $H_t=0)$, then for any monotone contextual generalized alpha-investing rule $($i.e., satisfying conditions~\eqref{eq1},~\eqref{eq2},~\eqref{eq3},~\eqref{eq4}, and~\eqref{eq5}$)$, we have online FDR control,
	\beq
	\sup_{T \in \mathbb{N}} \; \text{\em FDR}(T) \le \alpha.
	\eeq
\end{thm}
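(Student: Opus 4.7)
My strategy is to mirror the FDR analysis of monotone GAI++ rules in~\citet{ramdas2017online}, the key adaptation being the use of an enlarged filtration $\sigma(\cF^{t-1} \cup \cG^t)$ that records past decisions together with the features observed up through time $t$. I would start from the decomposition
\[
\text{FDR}(T) \;=\; \sum_{t \in \cH^0,\, t \le T} \E\!\left[\frac{R_t}{R(T) \vee 1}\right],
\]
and for each null $t$ condition on $\sigma(\cF^{t-1} \cup \cG^t)$. Lemma~\ref{lem:super-uniform}, applied with the coordinatewise non-decreasing function $g(R_1,\dots,R_T) = R(T)$ (positive on any nonzero input), then swaps $R_t$ for $\alpha_t$ inside the conditional expectation. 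The independence hypotheses (mutual independence of the $P_t$'s plus independence of each null $P_t$ from the whole feature sequence) are exactly what makes the conditional super-uniformity~\eqref{eqn:super-uniformity} valid on this enlarged filtration, and the monotonicity condition~\eqref{eq5} supplies the coordinatewise non-decrease of $\alpha_t$ in the past $R_s$ that the leave-one-out step inside the lemma relies on. Taking the outer expectation yields
\[
\text{FDR}(T) \;\le\; \E\!\left[\sum_{t \in \cH^0,\, t \le T} \frac{\alpha_t}{R(T) \vee 1}\right].
\]

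\paragraph{Bounding the surrogate via the wealth machinery.} From $W(T) \ge 0$ and the wealth recursion~\eqref{eq2} I get $\sum_{t \le T}(\phi_t - R_t \psi_t) \le W(0)$. I then insert the two case-dependent reward bounds flagged in the theorem statement: for $t \in \cH^0$ the inequality $\psi_t \le \phi_t/\alpha_t + b_t - 1$ rearranges to $\phi_t \ge \alpha_t + \alpha_t \psi_t - \alpha_t b_t$, and for $t \in \cH^1$ the inequality $\psi_t \le \phi_t + b_t$ rearranges to $\phi_t \ge \psi_t - b_t$. Substituting and rearranging isolates $\sum_{t \in \cH^0} \alpha_t$ on one side, leaving a null residual $\sum_{t \in \cH^0}(R_t - \alpha_t)\psi_t$ and a non-positive alternative residual $-\sum_{t \in \cH^1}(1 - R_t)\psi_t$, together with remainder terms in the $b_t$'s. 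Since $\psi_t \in \sigma(\cF^{t-1} \cup \cG^t)$ and $\E[R_t \mid \sigma(\cF^{t-1} \cup \cG^t)] \le \alpha_t$ for nulls, a second use of super-uniformity renders the null residual non-positive in conditional expectation, so both residuals drop out once the outer expectation is taken.

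\paragraph{Closing with the GAI++ form of $b_t$.} The remaining $b_t$-terms collapse into $W(0) + \sum_{t \le T} R_t b_t$, and the GAI++ choice $b_t = \alpha - w_0\1\{\rho_1 > t-1\}$ is tailor-made so that $W(0) + \sum_{t \le T} R_t b_t \le \alpha\,(R(T) \vee 1)$ on every sample path: if $R(T) = 0$, the sum vanishes and the bound reduces to $w_0 < \alpha$; if $R(T) \ge 1$, the first discovery contributes $\alpha - w_0$ and each subsequent discovery contributes $\alpha$, so the total equals $w_0 + (\alpha - w_0) + \alpha(R(T) - 1) = \alpha R(T)$. Dividing through by $R(T) \vee 1$ and taking expectation gives $\text{FDR}(T) \le \alpha$ uniformly in $T$, which is the claim.

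\paragraph{Main obstacle.} The filtration extension is the new conceptual ingredient relative to~\citet{ramdas2017online}, but it is absorbed almost for free by the way Lemma~\ref{lem:super-uniform} is stated, provided the independence hypotheses are correctly invoked. The delicate part is the middle step: the per-$t$ reward inequalities do not individually telescope or carry obvious signs, and folding them into a clean bound of the form $W(0) + \sum_{t \le T} R_t b_t$ requires combining the two cases carefully, invoking super-uniformity a second time to dispatch the $(R_t - \alpha_t)\psi_t$ residuals in conditional expectation, and then using the precise GAI++ form of $b_t$ to turn the surviving $b_t$-terms into exactly $\alpha\,(R(T) \vee 1)$. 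That algebraic consolidation is where I expect most of the proof effort to go.
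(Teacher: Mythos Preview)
Your opening move---applying Lemma~\ref{lem:super-uniform} to pass from $R_t$ to $\alpha_t$ and obtain the surrogate bound $\text{FDR}(T)\le \E\big[\sum_{t\in\cH^0,\,t\le T}\alpha_t/(R(T)\vee 1)\big]$---is fine, and so is your plan to dispatch the $(R_t-\alpha_t)\psi_t$ residuals by a second conditional super-uniformity. The gap is in the ``remainder terms in the $b_t$'s.'' Carrying out your substitution exactly, the pointwise inequality you obtain from $W(T)\ge 0$ together with the two reward bounds is
\[
\sum_{t\in\cH^0}\alpha_t \;\le\; w_0 \;+\; \sum_{t\in\cH^0}(R_t-\alpha_t)\psi_t \;-\; \sum_{t\in\cH^1}(1-R_t)\psi_t \;+\; \underbrace{\sum_{t\in\cH^0}\alpha_t b_t \;+\; \sum_{t\in\cH^1} b_t}_{\text{your }b_t\text{-remainder}}.
\]
This remainder does \emph{not} collapse into $\sum_{t\le T} R_t b_t$. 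For non-nulls you get the full $b_t$, not $R_t b_t$, so every unrejected alternative contributes roughly $\alpha$; the excess $\sum_{t\in\cH^1}(1-R_t)b_t$ is nonnegative pointwise and can be of order $\alpha\cdot|\cH^1(T)|$, which after dividing by $R(T)\vee 1$ need not be bounded by $\alpha$. For nulls the discrepancy $(\alpha_t-R_t)b_t$ likewise has the wrong sign in conditional expectation. So the final step of your plan fails as stated.

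The paper avoids this by never isolating $\sum_{t\in\cH^0}\alpha_t$. It bounds $\E\big[(V(T)+W(T))/(R(T)\vee 1)\big]$ directly (valid since $W(T)\ge 0$), writes the numerator as $\sum_t\big(w_0/T + R_t(\psi_t+\1\{t\in\cH^0\})-\phi_t\big)$, and applies the two reward bounds termwise \emph{inside} this expression. The key is that in the null case the bound $\psi_t+1\le \phi_t/\alpha_t + b_t$ produces $R_t b_t + (\phi_t/\alpha_t)(R_t-\alpha_t)$, and in the non-null case $\psi_t\le \phi_t+b_t$ produces $R_t b_t + (R_t-1)\phi_t$; both residuals are nonpositive (one via Lemma~\ref{lem:super-uniform}, one pointwise), and both leave exactly $R_t b_t$. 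That is what makes the $b_t$-terms line up with $\sum_t R_t b_t$ and hence with $\alpha(R(T)\vee 1)$. Your two-stage route---surrogate first, wealth second---loses precisely this coupling, because the surrogate step has already traded $R_t$ for $\alpha_t$ on nulls, and the wealth inequality cannot recover $R_t b_t$ for non-nulls.
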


Turning our attention to mFDR, we can prove a guarantee for mFDR control under a weaker condition than that in \thmref{fdr-control}. In particular, one can relax the independence assumptions to a weaker conditional super-uniformity assumption from~\eqref{eqn:super-uniformity} (as mentioned above, the validity of p-values~\eqref{eqn:superp} and independence implies conditional super-uniformity). We define that a null p-value is conditionally super-uniform on past discoveries and contextual features so far, if the following holds
\begin{align} \label{eqn:condsup}
\textbf{Conditional super-uniformity of $P_t$: } \text{If $H_t=0$ ($H_t$ is null) then } \Pr[P_t \le \alpha_t  \mid \sigma(\cF^{t-1} \cup \cG^t) ] \le \alpha_t.
\end{align}
Our next theorem formalizes the mFDR control for any contextual GAI rule (not necessarily monotone) under the above condition.
\begin{thm} \label{thm:mfdr-control} [mFDR Control]
Consider a sequence of $((P_t,X_t))_{t \in \mathbb{N}}$  of p-values and contextual features. If the p-values $P_t$'s are conditionally super-uniform distributed (as in~\eqref{eqn:condsup}), then for any contextual generalized alpha-investing rule $($i.e., satisfying conditions~\eqref{eq1},~\eqref{eq2},~\eqref{eq3}, and~\eqref{eq4}$)$, we have online mFDR control,
\beq
\sup_{T \in \mathbb{N}} \; \text{\em mFDR}(T) \le \alpha.
\eeq
\end{thm}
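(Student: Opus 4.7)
The plan is to bound $\E[V(T)]$ in terms of $\alpha\E[R(T)] + \alpha$, which is exactly what $\mathrm{mFDR}(T) \le \alpha$ requires. Unlike the FDR proof, there is no need for a leave-one-out argument or monotonicity: conditional super-uniformity from~\eqref{eqn:condsup} gives the necessary moment bound directly, because $\phi_t,\alpha_t,b_t$ are all $\sigma(\cF^{t-1}\cup\cG^t)$-measurable by construction of a CGAI rule.

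First I would exploit the non-negativity condition~\eqref{eq3}, which together with the wealth recursion~\eqref{eq2} and initial wealth condition~\eqref{eq1} implies the telescoped inequality
\[
\sum_{t=1}^T R_t \psi_t \;\ge\; \sum_{t=1}^T \phi_t \;-\; W(0).
\]
Second, I would split this sum over nulls and non-nulls and apply the two halves of the reward constraint~\eqref{eq4} separately. For $t\in\cH^1$, the bound $\psi_t\le\phi_t+b_t$ gives $R_t\psi_t\le R_t\phi_t+R_tb_t$, while for $t\in\cH^0$, the tighter bound $\psi_t\le\phi_t/\alpha_t+b_t-1$ yields
\[
R_t \;\le\; R_t\phi_t/\alpha_t + R_tb_t - R_t\psi_t.
\]
Summing this inequality over $t\in\cH^0$, substituting the lower bound on $\sum_{t\in\cH^0} R_t\psi_t = \sum_t R_t\psi_t - \sum_{t\in\cH^1} R_t\psi_t$ obtained from the wealth identity and the non-null half of~\eqref{eq4}, and simplifying, I expect to reach the pathwise bound
\[
V(T) \;\le\; \sum_{t\in\cH^0} R_t\phi_t/\alpha_t \;-\; \sum_{t\in\cH^0} \phi_t \;+\; \sum_{t=1}^T R_t b_t \;+\; W(0).
\]

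The final step is to take expectations and invoke conditional super-uniformity~\eqref{eqn:condsup}. Since $\phi_t/\alpha_t$ is $\sigma(\cF^{t-1}\cup\cG^t)$-measurable, for each $t\in\cH^0$,
\[
\E\!\left[R_t\phi_t/\alpha_t \,\Big|\, \sigma(\cF^{t-1}\cup\cG^t)\right] = (\phi_t/\alpha_t)\,\Pr[P_t\le\alpha_t\mid\sigma(\cF^{t-1}\cup\cG^t)] \le \phi_t,
\]
so $\sum_{t\in\cH^0}\E[R_t\phi_t/\alpha_t-\phi_t]\le 0$. Using $b_t\le\alpha$ gives $\sum_t R_tb_t\le\alpha R(T)$, and combining with $W(0)=w_0<\alpha$ yields $\E[V(T)]\le\alpha\E[R(T)]+\alpha=\alpha(\E[R(T)]+1)$, which is the claimed mFDR control and holds uniformly in $T$.

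The main obstacle, such as it is, is bookkeeping: carefully separating the null and non-null contributions so that the negative term $-\sum_{t\in\cH^0}\phi_t$ is available to cancel against $\sum_{t\in\cH^0}\E[R_t\phi_t/\alpha_t]$, while the non-null budget telescopes cleanly through the wealth identity. No monotonicity is needed because we never compare two trajectories of $(R_1,\ldots,R_{t-1})$; we only condition on the current filtration and apply~\eqref{eqn:condsup} once per null step. This is exactly why the theorem holds for any (not necessarily monotone) contextual GAI rule and under the weaker conditional super-uniformity assumption rather than full independence.
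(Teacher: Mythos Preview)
Your proposal is correct and follows essentially the same approach as the paper: both exploit the nonnegativity of wealth $W(T)\ge 0$, split into null and non-null cases via the two halves of~\eqref{eq4}, and apply conditional super-uniformity~\eqref{eqn:condsup} to the null terms. The paper's organization is marginally more compact---it bounds $\E[V(T)+W(T)]$ term by term rather than first deriving your pathwise inequality on $V(T)$, and it uses the explicit form $b_t=\alpha-w_0\1\{\rho_1>t-1\}$ to reach the slightly sharper $\E[V(T)]\le\alpha\,\E[R(T)\vee 1]$---but the two arguments are otherwise identical.
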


\begin{rem}
For arbitrary dependent p-values and contextual features, the FDR control can also be obtained by using a modified LORD rule defined in~\cite{javanmard2018online}, under a special case where the contextual features are transformed into weights satisfying certain conditions. See Proposition~\ref{prp:FDR-dependent-general-weight}  for a formal statement.
\end{rem}

\section{Context-weighted Generalized Alpha-Investing Rules} \label{sec:weighted-rule}
The contextual GAI rules, introduced in the previous section, form a very general class of online multiple testing rules. In this section, we focus on a subclass of these rules, which we refer to as \textit{\CwGAI} (context-weighted GAI or CwGAI) rules. Specifically, we consider $\alpha_t$ to be a product of two functions with the first one of previous decisions and second one based on the current contextual feature. Borrowing from offline multiple testing literature, we refer to the second parametric function as a {\em weight function} (denoted by $\omega(\cdot)$). Therefore, a context-weighted GAI rule has this form, 
\begin{align} \label{Context-weightedGAI}
\alpha_t(R_1,\dots,R_{t-1},X_1,\dots,X_t) := \alpha_t(R_1, \dots, R_{t-1}) \cdot \omega (X_t; \theta),
\end{align}
where $\omega(X_t;\theta)$ is a parametric function with a parameter set $\theta \in \Theta$. Since CwGAI is a subclass of CGAI rules, the above FDR and mFDR control theorems are valid for this class too. 

Our reasons for considering this subclass of contextual GAI rules include: \begin{enumerate*} \item With context-weighted generalized alpha-investing rules we obtain a simpler form of $\alpha_t$ by separating the contextual features from that of previous outcomes, making it easier to design functions that satisfy the monotonicity requirement of the GAI rules,
\item It is now convenient to model the weight function by any parametric function, and
\item We can learn the parameters of the weight function empirically by maximizing the number of discoveries. This forms the basis of a practical algorithm for contextual online FDR control that we describe in Section~\ref{sec:algorithm}.\end{enumerate*}
Note that the GAI rules are still a subclass of context-weighted generalized alpha-investing rules by taking the weight function as a constant $1$. We illustrate the relationship among various classes of testing rules in \figref{relation}. 

The idea of weighting p-values by using prior information has been widely studied in offline multiple testing setup~\citep{genovese2006false, ignatiadis2016data, li2016multiple, lei2018adapt, xia2017neuralfdr, ramdas2017unified}. In many applications, contextual information can provide some prior knowledge about the true underlying state at current time, which may be incorporated in by a weight $\omega_t = \omega(X_t;\theta)$. Intuitively, the weights indicate the strength of a prior belief whether the underlying hypothesis is null or not. Therefore, a larger weight $\omega_t > 1$ provides more belief of a hypothesis being an alternative which makes the procedure to reject it more aggressively, while a smaller weight $\omega_t < 1$ indicates a higher likelihood of a null which makes the procedure reject it more conservatively. 
\begin{figure}\centering
	\includegraphics[width = 15cm]{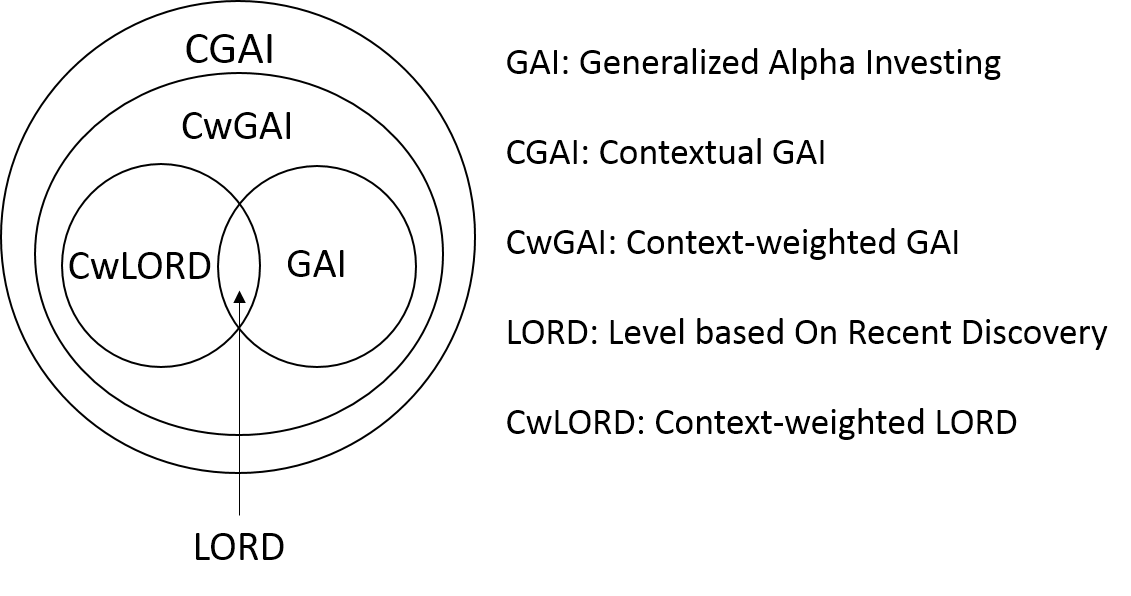}
	\caption{Relationship among various testing rules. One could replace LORD with LORD++ (resp.\ CwLORD with CwLORD++).}
	\label{fig:relation}
\end{figure}

\paragraph{Weighting in Online vs.\ Offline Setting with FDR Control.} In the offline setting with a fixed set $\{P_1,\dots,P_n\}$ of p-values, the idea of using prior weights is as follows: a) first the given set of weights (say, $\hat{\omega}_i'$s) are rescaled to make the average equal to one giving rise to new $\omega_i$'s ($i \in [n]$), and b) then any existing offline FDR control algorithm is applied by replacing $P_i$ with the weighted p-value $P_i/\omega_i$. This p-value weighting in the offline setting was first suggested by~\citep{genovese2006false}, and we refer the reader to that paper for a detailed account on p-value weighting. However, in an online setting, the weights $\omega_t$'s are computed at each timestep $t$ without access to the total number of hypothesis or contextual information ahead of time, which means weights cannot be rescaled to have unit mean in advance. 

Instead of weighting the p-values, as represented in~\eqref{Context-weightedGAI}, we consider weighting the significance levels $\alpha_t$'s, as was also considered by~\citet{ramdas2017unified}. Note that weighting p-values is equivalent to weighting significance levels in terms of decision rules conditioning on the same significance levels, i.e., given the same $\alpha_t$'s, we have $\{P_t/\omega_t \leq \alpha_t \} \equiv \{P_t \le  \alpha_t \omega_t \}$ for all $t$. The main difference is that when $\alpha_t$'s are weighted, the penalty $\phi_t$'s and rewards $\psi_t$'s also needs to be adjusted according to the GAI constraints. For example, as dictated by~\eqref{eqn:psi}, if we overstate our prior belief in the hypothesis being alternative by assigning a large $\omega_t > 1$, the penalty will need to be more or the reward will need to be less.  

\paragraph{Context-weighted LORD++.} Next let us see how the principle of context weighting can be applied to LORD++  rules defined in Section~\ref{sec:onlineFDR}. Given a weight function, $\omega : \cX \times \Theta \rightarrow \mathbb{R}$, we define the context-weighted LORD++ (CwLORD++) testing rule as follows.

\noindent\fbox{%
\parbox{\textwidth}{%
Context-weighted LORD++ (CwLORD++):
\begin{eqnarray*}
	& W(0) = w_0,& \\
	& \phi_t = \alpha_t = \min \{\gamma_{t - \tau_t} b_t \cdot \omega(X_t; \theta), W(t-1) \},&\\
	& \psi_t = b_t = \alpha - w_0 \1 \{\rho_1 > t-1\}.&
\end{eqnarray*}
}}
Similarly, given the function $\omega$, we can also define context-weighted LORD using the LORD rules from Section~\ref{sec:onlineFDR}. As pointed out before, when $\alpha_t$'s are reweighted, the penalty $\phi_t$'s are also adjusted accordingly. This provides a clean way of incorporating the weights in an online setup without having to rescale the weights to have unit mean.

In the next section, we see how weighting could be beneficial in an online setting. In Section~\ref{sec:algorithm}, we discuss an online multiple testing procedure based on CwLORD++ (see Algorithm~\ref{alg:NeuralOnlineFDR}), where we use a neural network to model the weight function $\omega$ with parameters trained to adaptively increase the discovery proportion over time. Our experimental investigation will be based on this procedure.

\section{Statistical Power of Weighted Online Rules} \label{sec:power}
In this section, we answer the question whether weighting helps in an online setting in terms of increased power. We answer this question in affirmative, in the context of the popular LORD procedure of~\citet{javanmard2018online}. The benefits of weighting in the offline setting, in terms of increased power was first studied by \cite{genovese2006false}, who showed that a weighted BH procedure improves the power over the corresponding unweighted procedure if weighting is {\em informative}, which roughly means that the weights are positively associated with the non-nulls.  Missing details from section are collected in Appendix~\ref{app:power}.

We use a model setup as in \citep{genovese2006false} modified to an online setting. In particular, we consider a mixture model where each null hypothesis is false with probability $\pi_1$ independently of other hypotheses, and the p-values corresponding to different hypotheses are all independent. While the mixture model is admittedly {\em idealized}, it does offer a natural ground for comparing the power of various testing procedures~\citep{genovese2006false,javanmard2018online}. Let us start with a formal description of the mixture model. The rest of the discussion in this section will be with respect to this mixture model.

\paragraph{Mixture Model.} For any $t \in \mathbb{N}$, let
\begin{eqnarray*}
& H_1, \dots, H_t \stackrel{\text{i.i.d.}}{\sim} \text{Bernoulli} (\pi_1), & \\
& X_t \mid H_t = 0 \sim \cL_0(\cX), \quad  X_t \mid H_t = 1 \sim \cL_1(\cX),  &\\
& P_t \mid H_t = 0, X_t \,  \sim \text{Uniform} (0,1), & \\
& P_t \mid H_t = 1, X_t  \, \sim F_1 (p \mid X_t). &
\end{eqnarray*}
where $0 < \pi_1 < 1$ and where $\cL_0(\cX)$, $\cL_1(\cX)$ are two probability distribution on the contextual feature space $\cX$. Let $F = \int F_1(p \mid X) \mathrm{d} \cL_1(\cX)$ be the marginal distribution of p-value under alternative. Marginally, the p-values are i.i.d.\ from the CDF $G(a) = (1-\pi_1)U(a) + \pi_1 F(a)$, where $U(a)$ is the CDF of Uniform(0,1). For this mixture model, a lower bound (and in some cases tight) bounds on the power of the LORD procedure was established by~\citep{javanmard2018online}. Our aim will be to compare this with the power of a weighted version of LORD to establish a separation.

\paragraph{General Weighting Scheme.}  We consider the model of general weighting where the weight $\omega_t$ is a random variable that is conditionally independent of $P_t$ given $H_t$ for all $t = 1 \dots, \infty$. More concretely, we assume that the weights have different marginal distributions under null and alternative,
\begin{align} \label{weight}
\omega\mid H_t = 0 \sim Q_0,  \quad \omega\mid H_t = 1 \sim Q_1,
\end{align}
for some unknown probability distributions $Q_0, Q_1$ that are continuous on $(0, \infty)$. For the power analysis, we draw the weight 
\begin{align} \label{eqn:w}
\omega_t \stackrel{\text{i.i.d.}}{\sim} (1-\pi_1)Q_0 + \pi_1 Q_1,
\end{align} 
with $P_t$ and $\omega_t$ being conditionally independent given $H_t$ for all $t =1 \dots, \infty$.  

\paragraph{Contextual Weighting Scheme.} This framework of weighting in~\eqref{weight} is very general. For example, it includes as a special case, the following contextual weighting scheme, where we assume that there exists a weight function of contextual features $\omega\,:\, \cX \rightarrow \mathbb{R}$, and the distributions of weights are defined as:
\begin{align} \label{eqn:sep}
\omega \mid H_t = 0 \, \sim \omega(X), \text{ with } X \sim \cL_0, \quad \omega \mid H_t = 1 \, \sim \omega (X), \text{ with } X \sim \cL_1.
\end{align}
Now $Q_0$ and $Q_1$ in~\eqref{weight} are defined as the distributions of $\omega (X)$ under the null and alternative, respectively.  Given $Q_0$ and $Q_1$, the weight $\omega_t$ is sampled as in~\eqref{eqn:w}.\footnote{In case , $X_t \stackrel{\text{i.i.d.}}{\sim} (1-\pi_1)\cL_0 + \pi_1 \cL_1$, then one can define  $\omega_t$ directly as $\omega_t= \omega(X_t)$ with $Q_0$ and $Q_1$ defined as the distributions of $\omega(X_t)$ under the null and alternative, respectively.}
We do not require that the contextual features $X_t$'s be independent, but only that they be identically distributed as $\cL_0$ (under null) or $\cL_1$ (under alternative). A reader might notice that that while the distributions $Q_0$ and $Q_1$ for weights are defined through $X_t$'s distribution, the weight $\omega_t$ is sampled i.i.d.\ from the mixture model $(1-\pi_1)Q_0 + \pi_1 Q_1$, regardless of the value of $X_t$. 

Note that the independence assumption on p-values can still be satisfied even when the $X_t's$ are dependent.\!\footnote{For example, in practice it is common that the contextual features are dependent (e.g., same genes or genetic variants may be tested in multiple independent experiments at different time), but as long as the tests are carried out independently the p-values are still independent.} Since this contextual weighting scheme is just a special case of the above general weighting scheme, in the remainder of this section, we work with the general weighting scheme.

\paragraph{Informativeness.}  Under~\eqref{weight}, the marginal distribution of $\omega$ is $Q = (1-\pi_1) Q_0 + \pi_1 Q_1$. For $j = 0,1$, let $ u_j = \E[\omega \mid H_t = j]$ be the means of $Q_0$ and $Q_1$ respectively. We also assume that the following notion of informative-weighting, based on a similar condition used by~\citep{genovese2006false} in the offline setting.
\begin{align} \label{weight-condition}
\textbf{Informative-weighting: } u_0 < 1, \quad u_1 > 1, \quad u = \E[\omega] = (1-\pi_1) u_0 + \pi_1 u_1 = 1.
\end{align}
\begin{remark}
Informative-weighting places a very natural condition on the weights. Roughly it means that the weight should be positively associated to the true alternatives, or equivalently, the weight under alternative is more likely to be larger than that under the null. The marginal mean of weight $\E[\omega]$ is not necessary to be one. But for the theoretical comparison of the power of different procedures, it is convenient to scale the weight to have unit mean so that we can use the p-value reweighting akin to the offline setting. For empirical experiments, we will use an instantiation of CwLORD++ (see Section~\ref{sec:experiments}), that does not require the weight to have mean one. 
\end{remark}
Let the weights $\omega_t$'s be random variables that are drawn i.i.d.\ from this mixture model with marginal distribution $Q$. Taking the LORD++ rule from Section~\ref{sec:onlineFDR}, we define a weighted LORD++ rule as follows.
\begin{Def} [Weighted LORD++] \label{def:weightedlord++}
Given a sequence of p-values, $(P_1,P_2,\dots)$ and weights $(\omega_1,\omega_2,\dots)$, apply LORD++ with level $\alpha$ to the weighted p-values $(P_1/\omega_1, P_2/\omega_2, \dots)$.
\end{Def}

We want to emphasize that the weighted LORD++ rule actually reweights the p-values (as done in the offline weighted BH procedure~\citep{genovese2006false}) and then applies the original LORD++ to these reweighted p-values. So this is slightly different from the idea of CwLORD++, that we mentioned above, which reweights the significance levels and then applies it to the original p-values. To understand the difference, let us start from their definitions.

In LORD++, the penalty is $\phi_t = \alpha_t = \gamma_{t - \tau_t} b_t < W(t-1)$, which is always less than current wealth due to the construction of $\gamma_t$'s and $w_0$. So in weighted LORD++, we are comparing reweighted p-value $P_t'$ to the level $\alpha_t = \gamma_{t - \tau_t} b_t$, which is equivalent to comparing the original p-value $P_t$ to the level $\alpha_t' = \gamma_{t - \tau_t} b_t \omega_t$.

On the other hand, in CwLORD++, we take a penalty of the form $\tilde{\phi}_t = \tilde{\alpha}_t = \min \{\gamma_{t - \tau_t} b_t \omega_t, W(t-1) \}$. We take the minimum of reweighted significance level and the current wealth, to prevent the penalty $\gamma_{t - \tau_t} b_t \omega_t$ from exceeding the current wealth which would violate a tenet of the alpha-investing rules.

A simple corollary is that the actual significance levels used in weighted LORD++ are greater than those in CwLORD++, i.e., 
$$\alpha_t' = \gamma_{t - \tau_t} b_t \omega_t \ge \min \{\gamma_{t - \tau_t} b_t  \omega_t, W(t-1)\} = \tilde{\alpha}_t.$$ 
That implies the power of weighted LORD++ is equal to or greater than the power of CwLORD++, whereas the FDR of weighted LORD++ may also be higher than that of CwLORD++. From Theorem~\ref{thm:fdr-control}, we know that we have FDR control with CwLORD++, however that result does not hold for weighted LORD++ (as weighted LORD++ is not strictly a contextual GAI rule) 
We now show that the above weighted LORD++ can still control online FDR at any given level $\alpha$ under the condition $\E[\omega] = 1$, which we do in the following proposition.

\begin{prp} \label{prp:FDR-general-weight}
Suppose that the weight distribution satisfies the informative-weighting assumption in~\eqref{weight-condition}. Suppose that p-values $P_t$'s are independent, and are conditionally independent of the weights $\omega_t$'s given $H_t$'s. Then the weighted LORD++ rule can control the online FDR at any given level $\alpha$, i.e.,
\beq
\sup_{T \in \mathbb{N}}\; \text{\em FDR}(T) \le \alpha.
\eeq
\end{prp}

\paragraph{Weakening the Assumptions from Proposition~\ref{prp:FDR-general-weight}.} In most applications, the independence between p-values and weights needed in Proposition~\ref{prp:FDR-general-weight} is not guaranteed.~\citet{javanmard2018online} achieved the FDR control under dependent p-values by using a modified LORD rule, which sets $\psi_t = b_0$ and $\alpha_t = \phi_t = \gamma_t W(\tau_t)$ with the fixed sequence $(\gamma_t)$ satisfying $\sum_{t = 1}^{\infty} \gamma_t (1 + \log(t)) \le \alpha / b_0$.

We can extend the FDR control results to the dependent weighed p-values. In particular, as long as the following condition is satisfied, i.e., for each weighted p-value $(P_t /\omega_t)$ marginally
\begin{align} \label{eqn:depend}
\Pr[P_t/\omega_t \le u \mid H_t = 0] \le u, \quad \text{for all } u \in [0,1], 
\end{align}
then the upper bound of FDR stated in Theorem 3.7 in \cite{javanmard2018online} is valid for weighted p-values. Specifically, if the modified LORD rule in Example 3.8 of \cite{javanmard2018online} is applied to the weighted p-values under the assumption in~\eqref{eqn:depend}, then the FDR can be controlled below level $\alpha$. 

We formally state the results in the following proposition.
\begin{prp} \label{prp:FDR-dependent-general-weight}
	Suppose that the weight distribution satisfies the informative-weighting assumption in~\eqref{weight-condition}. And weighted p-values $(P_t /\omega_t)$ marginally satisfy
	\eqref{eqn:depend}. Then the modified LORD++ rule that applies to the weighted p-values can control the online FDR at any given level $\alpha$, i.e.,
	\beq
	\sup_{T \in \mathbb{N}}\; \text{\em FDR}(T) \le \alpha.
	\eeq
\end{prp}
The proofs of these extension are almost the same as those in \cite{javanmard2018online} and are omitted here.

\paragraph{Lower Bound on Statistical Power of Weighted LORD++.} In order to compare different procedures, it is important to estimate their statistical power. Here, we analyze the power of the weighted LORD++.
Define $D(a) = \Pr[P/\omega \le a]$ as the marginal distribution of weighted p-values. Under the assumptions on the weight distribution from~\eqref{weight}, the marginal distribution of weighted p-value equals,
\begin{align}
D(a)  & = \Pr[P / \omega \le a]  = \int \Pr [P / \omega \le a \mid \omega = w] \, \mathrm{d} Q(w) \nonumber \\
& = \int \sum_{h \in \{0,1\}} \Pr [P / \omega \le a \mid \omega = w, H = h] g(h\mid w)\, \mathrm{d} Q(w) \nonumber \\
& = \int \sum_{h \in \{0,1\}} \Pr[P / w \le a \mid  H = h] g(h\mid w) \, \mathrm{d} Q(w) \nonumber\\
& = \int \sum_{h \in \{0,1\}} ((1-h) aw + h F(aw)) g(h\mid w)\, \mathrm{d} Q(w) \nonumber \\
& = \int \sum_{h \in \{0,1\}} ((1-h) aw + h F(aw))\, \mathrm{d} Q(w\mid h) g(h)  \nonumber\\
& = \sum_{h \in \{0,1\}} \int ((1-h) aw + h F(aw))\, \mathrm{d} Q(w\mid h) g(h)  \nonumber \\
& = (1 - \pi_1) \int aw d Q(w\mid h = 0)+ \pi_1 \int F(aw)\, \mathrm{d} Q(w \mid h = 1) \nonumber \\
& = (1 - \pi_1) \mu_0 a + \pi_1 \int F(aw) \mathrm{d} Q_1(w). \label{eqn:da}
\end{align}

\begin{thm} \label{thm:lowerbound}
Let $D(a) = \Pr[P/\omega \le a]$ be the marginal distribution of weighted p-values as in~\eqref{eqn:da}. Then, the average power of weighted LORD++ rule is almost surely bounded as follows:
\beq 
	\liminf_{T \to \infty} \text{\em TDR}(T) \ge (\sum_{m =1}^{\infty} \prod_{j = 1}^{m}(1-D(b_0 \gamma_j)))^{-1}.
\eeq
\end{thm}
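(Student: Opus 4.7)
The idea is to reduce weighted LORD++ to an auxiliary renewal-type process by lower-bounding the effective significance levels, and then invoke renewal-theoretic limits together with the law of large numbers on the mixture model.

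First, observe that $b_t = \alpha - w_0 \1\{\rho_1 > t-1\} \ge b_0$ for every $t$, so the actual weighted LORD++ significance levels $\alpha_t = \gamma_{t-\tau_t} b_t$ dominate the auxiliary levels $\tilde{\alpha}_t := b_0 \gamma_{t-\tau_t}$ pointwise. By an inductive monotone coupling built on a single realization of $(H_t, P_t, \omega_t)_t$, every rejection produced by the auxiliary process is also a rejection of the actual process at the same time step and with the same $H_t$-label, so the counting variables satisfy $\tilde{R}(T) \le R(T)$ and $\tilde{S}(T) \le S(T)$ path-wise. Hence it suffices to lower bound the true discovery proportion of the auxiliary process.

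Second, in the auxiliary process the significance levels restart as $b_0 \gamma_1, b_0 \gamma_2, \ldots$ after every rejection, and under the mixture model the weighted p-values $P_t/\omega_t$ are i.i.d.\ with marginal CDF $D$ given in~\eqref{eqn:da}. Therefore the inter-rejection gaps $\xi_1, \xi_2, \ldots$ form an i.i.d.\ sequence satisfying
\[
\Pr[\xi \ge m+1] \;=\; \prod_{j=1}^m \bigl(1 - D(b_0 \gamma_j)\bigr), \qquad \E[\xi] \;=\; 1 + \sum_{m=1}^\infty \prod_{j=1}^m \bigl(1 - D(b_0 \gamma_j)\bigr).
\]
By the strong renewal theorem, $\tilde{R}(T)/T \to 1/\E[\xi]$ a.s., and by the usual SLLN applied to $(H_t)$, $N_1(T)/T \to \pi_1$ a.s. A renewal-reward argument with reward $\1\{H_{\rho_k} = 1\}$ attached to the $k$th rejection then yields $\tilde{S}(T)/T \to \pi_1 \bar{r}$ a.s., where $\bar{r}$ is the expected per-cycle rate of true (non-null) rejections, expressible through $F$ and $Q_1$.

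Finally, dividing gives $\liminf_T \tilde{S}(T)/N_1(T) \ge \bar{r}$ a.s., and Fatou's lemma transfers the bound to $\liminf_T \mathrm{TDR}(T)$. The main obstacle is the algebraic reduction of $\bar{r}$ to $\bigl(\sum_{m \ge 1} \prod_{j=1}^m (1 - D(b_0 \gamma_j))\bigr)^{-1} = (\E[\xi]-1)^{-1}$: this requires combining the cycle-position-dependent posterior probability that a rejected hypothesis is non-null with the explicit decomposition of $D$ in~\eqref{eqn:da}, and leveraging the informative-weighting condition $\E[\omega]=1$ from~\eqref{weight-condition} so that the null contribution cancels cleanly. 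A secondary subtlety lies in the coupling step: it must simultaneously track $H_t$-labels of rejections to ensure $\tilde{S}(T) \le S(T)$, not merely $\tilde{R}(T) \le R(T)$.
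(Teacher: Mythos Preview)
Your overall architecture matches the paper's proof almost step for step: reduce weighted LORD++ to the auxiliary LORD process via $b_t \ge b_0$, observe that the weighted p-values $P_t/\omega_t$ are i.i.d.\ with CDF $D$, model the inter-rejection gaps as an i.i.d.\ renewal sequence, apply a renewal--reward argument with reward $r_i = \1\{\rho_i \in \cH^1\}$, divide by $N_1(T)/T \to \pi_1$, and finish with Fatou. Your pathwise coupling to get $\tilde S(T)\le S(T)$ (not merely $\tilde R(T)\le R(T)$) is in fact more carefully stated than the paper's one-line ``$b_t \ge b_0$ so LORD++ has at least the power of LORD.''

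Where you diverge from the paper is in the reward computation, and this is where your proposal goes astray. The paper does \emph{not} treat $\E[r_i]$ as an obstacle at all: it simply takes $\E[r_i]=\pi_1$ and combines it with $\E[\Delta_1]=\sum_{m\ge 1}\prod_{j=1}^m(1-D(b_0\gamma_j))$ to obtain the stated bound in one line, following Javanmard--Montanari. In particular, the paper never invokes the decomposition of $D$ in the reward step and never uses the informative-weighting condition $\E[\omega]=1$ --- that assumption belongs to Proposition~\ref{prp:FDR-general-weight} and Theorem~\ref{thm:comparison}, not to this theorem. Your plan to reach $\bar r$ via ``cycle-position-dependent posteriors'' and a cancellation driven by $\E[\omega]=1$ is therefore both unnecessary for reproducing the paper's argument and aimed at the wrong target.

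A related bookkeeping point: your tail-sum gives $\E[\xi]=1+\sum_{m\ge 1}\prod_{j=1}^m(1-D(b_0\gamma_j))$, whereas the paper writes $\E[\Delta_1]=\sum_{m\ge 1}\prod_{j=1}^m(1-D(b_0\gamma_j))$. That off-by-one is exactly why you end up chasing $(\E[\xi]-1)^{-1}$ rather than $1/\E[\Delta_1]$; with the paper's indexing the bound drops out directly once $\E[r_i]=\pi_1$ is in hand.
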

The proof of \thmref{lowerbound} uses the similar technique as the proof of the statistical power of LORD in \citep{javanmard2018online}. The main distinction is that we replace the marginal distribution of p-values by the marginal distribution of weighted p-values. Theorem~\ref{thm:lowerbound} also holds for a weighted LORD procedure (replacing LORD++ by LORD in Definition~\ref{def:weightedlord++}).

\paragraph{Comparison of Power.} Next, we establish conditions under which a weighting could lead to increased power for LORD. We work with (a version of) the popular LORD procedure from~\cite{javanmard2018online}, which sets
\begin{align} \label{simpleLORD}
W(0) = w_0 = b_0 = \alpha/2, \,\, \phi_t = \alpha_t = b_0 \gamma_{t - \tau_t}, \,\, \psi_t = b_0.
\end{align}
As shown by \cite{javanmard2018online}, the average power of LORD, under the mixture model, almost surely equals\!\footnote{\citet{javanmard2018online} proposed multiple versions of LORD, and as noted by them the bound in~\eqref{lbLORD} lower bounds the average power on all the versions of LORD for the above mixture model.}
\begin{align} \label{lbLORD}
\text{For LORD: }\liminf_{T \to \infty} \tdr(T) = (\sum_{m =1}^{\infty} \prod_{j = 1}^{m}(1-G(b_0 \gamma_j)))^{-1},
\end{align}
where $G(a) = (1-\pi_1)U(a) + \pi_1 F(a)$ as defined earlier.

\begin{Def} [Weighted LORD]
Given a sequence of p-values, $(P_1,P_2,\dots)$ and weights $(\omega_1,\omega_2,\dots)$, apply LORD~\eqref{simpleLORD} with level $\alpha$ to the weighted p-values $(P_1/\omega_1, P_2/\omega_2, \dots)$.
\end{Def}
The FDR control of weighted LORD under an independence assumption on p-values and an assumption~\eqref{weight-condition} on weights follows from Proposition~\ref{prp:FDR-general-weight}.
From the proof of Theorem~\ref{thm:lowerbound}, the average power of weighted LORD almost surely equals 
\begin{align} \label{lbLORD++}
\text{For weighted LORD: } \liminf_{T \to \infty} \tdr(T) = \sum_{m =1}^{\infty} \prod_{j = 1}^{m}(1-D(b_0 \gamma_j))^{-1}.
\end{align}

Assume $F$ is differentiable and let $f = F'$ be the PDF of p-values under alternative. Due to the fact that p-values under alternative are stochastically dominated by the uniform distribution, there exists some $a_0 > 0$ such that $f(a) > 1$ for all $0 \le a < a_0$. The following theorem is based on comparing this power on weighted LORD from~\eqref{lbLORD++} with the power on LORD from from~\eqref{lbLORD}.

\begin{thm}\label{thm:comparison} [Power Separation]
Suppose that the parameters in LORD~\eqref{simpleLORD} satisfy $b_0 \gamma_1 < a_0$, and the weight distribution satisfies $\Pr[\omega < a_0/(b_0 \gamma_1 ) \mid H_t = 1] = 1$ for every $t \in \mathbb{N}$ and the informative-weighting assumption in~\eqref{weight-condition}. Then, the average power of weighted LORD is greater than equal to that of LORD almost surely.
\end{thm}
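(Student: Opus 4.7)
The plan is to reduce the power comparison to a pointwise comparison of marginal rejection probabilities at the LORD thresholds, and then exploit the structure of $f$ on $[0,a_0]$ together with the informative-weighting condition. First I would note that both the LORD power in~\eqref{lbLORD} and the weighted LORD power in~\eqref{lbLORD++} share the form $\bigl(\sum_{m=1}^\infty\prod_{j=1}^m(1-\Phi(b_0\gamma_j))\bigr)^{-1}$ with $\Phi=G$ for LORD and $\Phi=D$ for weighted LORD; this quantity is monotone non-decreasing in each $\Phi(b_0\gamma_j)$, so it suffices to establish the pointwise bound $D(b_0\gamma_j)\ge G(b_0\gamma_j)$ for every $j\ge 1$.

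Next, fixing $a=b_0\gamma_j$, I would substitute the closed form $D(a)=(1-\pi_1)\mu_0 a+\pi_1\int F(aw)\,dQ_1(w)$ from~\eqref{eqn:da} together with $G(a)=(1-\pi_1)a+\pi_1 F(a)$, and apply the informative-weighting identity $(1-\pi_1)(1-\mu_0)=\pi_1(\mu_1-1)$ to obtain
\[
D(a)-G(a)\;=\;\pi_1\int\bigl[F(aw)-F(a)-a(w-1)\bigr]\,dQ_1(w).
\]
Thus the theorem reduces to showing that this integral is non-negative.

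The remaining assumptions feed into this step as follows. Because $a\le b_0\gamma_1<a_0$ and $\omega<a_0/(b_0\gamma_1)$ almost surely under $H_t=1$, we get $a\omega<a_0$ almost surely on the support of $Q_1$, so the condition $f>1$ on $[0,a_0]$ applies throughout. I would then rewrite the integrand as
\[
F(aw)-F(a)-a(w-1)\;=\;\int_a^{aw}(f(u)-1)\,du,
\]
which is non-negative for $w\ge 1$ and non-positive for $w\le 1$. Combined with the informative-weighting mean condition $\E_{Q_1}[\omega]=\mu_1>1$, the positive contribution from $\{w>1\}$ should outweigh the negative contribution from $\{w<1\}$.

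The hard part will be making this cancellation rigorous, since the pointwise integrand changes sign at $w=1$ and the mean condition $\mu_1>1$ does not automatically dominate without engaging the structure of $f$. My approach would be a Fubini / layer-cake reformulation,
\[
\int\bigl[F(aw)-F(a)-a(w-1)\bigr]\,dQ_1(w)\;=\;a\int_0^{a_0/a}(f(as)-1)\bigl[\Pr_{Q_1}[\omega>s]-\1\{s<1\}\bigr]\,ds,
\]
where the cutoff at $s=a_0/a$ follows from the support bound on $\omega$. This recasts the question as a one-dimensional integral pairing a non-negative weight $f(as)-1$ against a signed measure whose total mass $\int_0^{a_0/a}[\Pr_{Q_1}[\omega>s]-\1\{s<1\}]\,ds=\mu_1-1$ is strictly positive, after which a monotonicity/rearrangement argument on $[0,a_0/a]$ (using that $f-1>0$ there) should deliver the required non-negativity and hence $D(b_0\gamma_j)\ge G(b_0\gamma_j)$.
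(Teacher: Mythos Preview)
Your reduction to the pointwise comparison $D(b_0\gamma_j)\ge G(b_0\gamma_j)$ and the Fubini/layer-cake identity are both correct and mirror the paper's route. The gap is in your last paragraph. You arrive at
\[
a\int_0^{a_0/a}\bigl(f(as)-1\bigr)\,\nu(ds),\qquad \nu(ds)=\bigl[\Pr_{Q_1}(\omega>s)-\1\{s<1\}\bigr]\,ds,
\]
with $f(as)-1>0$ throughout and $\int\nu(ds)=\mu_1-1>0$. But a strictly positive function integrated against a signed measure of positive total mass need not give a non-negative result; that would require control on the \emph{shape} of $f(as)-1$ relative to where $\nu$ changes sign. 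Here $\nu\le 0$ on $[0,1)$ and $\nu\ge 0$ on $[1,a_0/a]$, while for a typical decreasing alternative density the factor $f(as)-1$ is \emph{largest} exactly on $[0,1)$, which works against you. No ``monotonicity/rearrangement'' argument repairs this without an extra hypothesis such as $\omega\ge 1$ $Q_1$-a.s.\ or $f$ non-decreasing on $[0,a_0]$, neither of which is assumed. A concrete obstruction: with $\Psi(x):=F(x)-x$ the quantity in question is $\E_{Q_1}[\Psi(a\omega)]-\Psi(a)$; take $f\equiv M$ on $[0,\epsilon]$ and $f\equiv 1+\delta$ on $(\epsilon,a_0]$ with $M$ large and $\delta$ small, set $a=b_0\gamma_1=\epsilon$, and let $Q_1$ place mass $\tfrac12$ near some small $\eta>0$ and mass $\tfrac12$ near $2\mu_1$. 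Then $\Psi(a)=(M-1)\epsilon$, whereas $\E_{Q_1}[\Psi(a\omega)]\approx\tfrac12(M-1)\epsilon\eta+\tfrac12\bigl[(M-1)\epsilon+\delta\epsilon(2\mu_1-1)\bigr]<\Psi(a)$ as soon as $M-1>\delta(2\mu_1-1)/(1-\eta)$.

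For comparison, the paper reaches the same juncture via the Mean Value Theorem, writing $F(aw)-F(a)=f(\xi)\,a(w-1)$ and then asserting $\int f(\xi)(w-1)\,dQ_1(w)\ge\int(w-1)\,dQ_1(w)$ ``since $f(\xi)>1$''. This is the identical issue in different notation: the pointwise bound $f(\xi)(w-1)\ge(w-1)$ holds when $w>1$ but \emph{reverses} when $w<1$, so the integrated inequality does not follow from $f(\xi)>1$ alone. Your layer-cake formulation makes the obstruction more transparent than the paper's MVT step, but neither argument closes the gap under the stated assumptions.
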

As discussed earlier since the general weighting scheme includes the context-weighting scheme, so the results here indicate that using the informative context-weighting in the LORD rules will help in making more true discoveries.

\begin{remark}
Intuitively, the condition implies that to achieve higher power while controlling FDR, the weights given to alternate hypotheses cannot be too large. Let us discuss this point in the context of weighted LORD++ and context-weighted LORD++. 

In weighted LORD++, we can always assign large weights to make the reweighted p-values small enough to be rejected, in order to achieve high power. But this can lead to a loss in FDR control which is why we need the restriction of $\E[\omega] = 1$ for proving the FDR control in~\prpref{FDR-general-weight}. Therefore, it is natural to have weights not too large.

If we consider reweighting the significance levels as in CwLORD++, assigning a large weight will not affect the FDR control (we prove that FDR is controlled for any choice of weights in~\thmref{fdr-control}). However, the price is paid in terms of power. When we use a large weight, the penalty $\phi_t$ increases and therefore the wealth might go quickly down to zero. Once the wealth is exhausted, the significance levels afterwards must all be zero and thus preventing any further discoveries.
\end{remark}
We now conclude this discussion with a simple example of how the conditions of Theorem~\ref{thm:comparison} are easily satisfied in a common statistical model.

\begin{example}
To further interpret the weight condition in~\thmref{comparison}, let us take a concrete example. Consider the hypotheses $(H_1, \dots, H_T)$ concerning the means of normal distributions (referred to as {\em normal means model}). This model corresponds to getting test statistics $Z_t \sim \cN(\mu, 1)$.  So the two-sided p-values are $P_t = 2 \Phi(-|Z_t|)$, where $\Phi$ is the CDF of standard normal distribution.  Suppose under the null hypothesis $\mu = 0$, and under the alternative hypothesis $ 0< \mu \le4$. Then from simple computation we obtain that $a_0 > 0.022$ for any $\mu$ such that $0 <\mu \le 4$. In fact, $a_0$ increases as $\mu$ decreases. Typically, we set $\alpha = 0.05$, so $b_0 = \alpha/ 2 = 0.025$. Using the sequence of hyperparameters $\{\gamma_t\}_{t \in \mathbb{N}}$ (where $\gamma_t = 0.0722 \log(t \vee 2)/(t \exp(\sqrt{\log t}))$) as suggested by~\citep{javanmard2018online} for this normal means model, we compute that $\gamma_1 \approx 0.117$ when the number of total hypotheses $T = 10^5$. Therefore, we get $a_0/(b_0 \gamma_1) \approx 7.52 > 1$. As long as the weight $\omega$ is less than $7.52$ with probability 1 under the alternative hypotheses (for any $\mu$ such that $0 < \mu < 4$), the condition needed for~\thmref{comparison} is satisfied.
\end{example}

\section{Online FDR Control Experiments using Context-weighted GAI} \label{sec:algorithm}
In this section, we propose a practical procedure for contextual online FDR control based on context-weighted GAI rules, and present numerical experiments to illustrate the performance gains achievable with this procedure. Remember that a context-weighted GAI rule is a contextual GAI rule with
$$\alpha_t(R_1,\dots,R_{t-1},X_1,\dots,X_t) = \alpha_t(R_1, \dots, R_{t-1}) \cdot \omega (X_t; \theta),$$
where $\omega (X_t; \theta)$ is a parametric function (with parameter set $\theta \in \Theta$)  that is used to model the weight function. Technically, we can use any parametric function here. In this paper, we choose a deep neural network (multilayer perceptron) for this modeling due to its expressive power, as noted in a recent batch FDR control result by~\citep{xia2017neuralfdr}.

Given this, a natural goal will be to maximize the number of empirical discoveries while controlling FDR. To do so, we are going to find  $\theta \in \Theta$ that maximizes the number of discoveries (or discovery rate), while empirically controlling the FDR. Note that if the function $\alpha_t (R_1, \dots, R_{t-1})$ is monotone (such as with LORD or LORD++) with respect to the $R_i$'s, the function $\alpha_t(R_1, \dots, R_{t-1}) \cdot \omega (X_t; \theta)$ is also monotone with respect to the $R_i$'s. In particular, this means that under the assumptions on the p-values from Theorem~\ref{thm:fdr-control}, FDR is controlled.

\paragraph{Training the Network, Setting $\theta$.}  Algorithm~\ref{alg:NeuralOnlineFDR} is used for training the network, with the goal of adaptively increasing the discovery rate over time. Given a stream $((P_t,X_t))_{t \in \mathbb{N}}$, the algorithm processes the stream in batches, in a single pass. Let $b \geq 1$ denote the batch size. Let $\theta_j$ be the parameter obtained before batch $j$ is processed, thus $\theta_j$ is only based on all previous p-values and contextual features which are assumed to be independent of all future batches. For each batch, the algorithm fixes the parameters to compute the significance levels for hypothesis in that batch. Define, the Empirical Discovery Rate for batch $j$ as follows:

\beq
\DR_{j} = \frac{\sum_{i = jb+1}^{(j+1)b}\1\{P_i \le \alpha_i(X_i;\theta)\}}{b}.
\eeq
Since the above function is not differentiable, we use the sigmoid function $\sigma$ to approximate the indicator function, and define
\beq
\DR_{j} = \frac{\sum_{i = jb+1}^{(j+1)b} \sigma (\lambda (\alpha_i(X_i;\theta) - P_i))}{b}.
\eeq
Here $\lambda$ is a large positive hyperparameter.
With this, the parameter set $\theta$ can now be optimized by using standard (accelerated) gradient methods in an online fashion. Note that we are only maximizing empirical discovery rate subject to empirical FDR control, and the training does not require any ground truth labels on the hypothesis. In fact, the intuition behind Algorithm~\ref{alg:NeuralOnlineFDR} is very similar to the {\em policy gradient descent} method popular in reinforcement learning, which aims to find the best policy that optimizes the rewards. In an online multiple testing problem, we can regard the number of discoveries as reward and parametric functions as policies. 

%

\begin{algorithm}[H]
	\caption{Online FDR Control with a Context-Weighted GAI Procedure}
	Model the weight function as a multi-layer perceptron (MLP)\;
	\medskip
	\KwIn{A sequence of p-value, contextual feature vector pairs $((P_1,X_1), (P_2,X_2),\dots)$, a monotone GAI rule (such as LORD++) denoted by $\mathbb{G}$ with desired FDR control, batch size $b$, learning rate~$\eta$}
	\KwOut{Neural network model parameter set $\theta \in \Theta$}
	\medskip
	Randomly initialize the parameter set $\theta_0$; batch index $j = 0$\\
	\Repeat{convergence or end-of-stream}{
	\For{$i=1$ to $b$}{
	Consider the pair $(P_{jb+i}, X_{{jb+i}})$  ($i$th hypothesis in the $j$th batch)\\
	Let $\tilde{\alpha} \leftarrow \alpha_{jb+i}(R_1,\dots,R_{jb+i-1})$ (computed as defined by the GAI rule $\mathbb{G}$) \\
	Accept/reject this hypothesis, while satisfying GAI $\mathbb{G}$, with significance level reweighed as $\tilde{\alpha} \cdot \omega(X_{{jb+i}};\theta_j)$	}
	Use the decisions in the $j$th batch to update the empirical discovery proportion ($\DR_{j}$) \\
	Compute the gradient with respect to the parameter set $\frac{\partial\, \DR_j}{\partial\, \theta}$ \\
	Update the parameter set: $\theta_{j+1} \leftarrow \theta_j + \eta \frac{\partial\, \DR_j}{\partial\, \theta}$ \\
	$j \leftarrow j + 1$
	}
	Return $\theta_j$
	\label{alg:NeuralOnlineFDR}
\end{algorithm}

In all our experiments, we use a multilayer perceptron to model the weight function, which is constructed by 10 layers and 10 nodes with ReLU as the activation function in each layer, and exponential function of the output layer, since the weight has to be non-negative. In the following, we use \CwLORDplus (\textbf{CwLORD++}) to denote the testing rule obtained from Algorithm~\ref{alg:NeuralOnlineFDR} by using LORD++ as the monotone GAI rule. Before the experiments, we start with some additional comments about Algorithm~\ref{alg:NeuralOnlineFDR}.

\paragraph{Discussion about Algorithm~\ref{alg:NeuralOnlineFDR}.} A natural question to ask is whether one can check for the informative-weighting assumption~\eqref{weight-condition} in practice. If we assume the feedback (true labels) are given after testing each batch, then this condition can be verified in the online learning process. With the feedback after each batch, we can compute the average weights of the true alternatives and nulls, to see whether the former is greater than the latter. If so, then the weights learned so far are informative and can be utilized further. If not, for next batch we can revert to previous informative weights, or even start over from the baseline unweighted procedure. 

Algorithm~\ref{alg:NeuralOnlineFDR} deals with the case when there is no feedback. The algorithm learns the weight function in an online fashion, by regarding the previous decisions as ground truth, informally meaning that it will regard previously rejected hypotheses as true alternatives and thereby assigning a larger weighting in CwLORD++ for future hypothesis with contextual features similar to those of previously rejected hypotheses. Online FDR control is always guaranteed by Algorithm~\ref{alg:NeuralOnlineFDR}. The hope is that by maximizing the number of empirical discoveries, the algorithm can learn an informative-weighting (as possibly corroborated by our experiments).  However, without feedback, or additional modeling assumptions, it is hard to verify whether the informative-weighting condition is satisfied. Along the same lines, the convergence of Algorithm~\ref{alg:NeuralOnlineFDR} (under a suitable generative model) is a very interesting open problem which will be considered in our future work.

\subsection{Experiments} \label{sec:experiments}
In this section, we present results for numerical experiments with both synthetic and real data to compare the performance of our proposed CwLORD++ testing rule with the current state-of-the-art online testing rule LORD++~\citep{ramdas2017online}. The synthetic data experiments are based on the normal means model, which is commonly used in hypothesis testing literature for comparing the performance of various testing procedures. Our real data experiments focus on a diabetes prediction problem and gene expression data analyses. The primary goal of this section is to show the increased power (under FDR control) in online multiple testing setup that can be obtained with our proposed contextual weighting in many real world problems. Experiments with the SAFFRON procedure are presented in Appendix~\ref{app:saffron}.

\subsubsection{Synthetic Data Experiments} \label{sec:syndata}
For the synthetic data experiments, we consider the hypotheses $\cH (T) = (H_1, \dots, H_T)$ coming from the normal means model. The setup is as follows: for $t \in [T]$, under the null hypothesis, $H_t: \mu_t = 0$, versus under the alternative, $\mu_t = \mu(X_t)$ is a function of $X_t$.  We observe test statistics $Z_t = \mu_t + \eps_t$, where $\eps_t$'s are independent standard normal random variables, and thus the two-sided p-values are $P_t = 2 \Phi(-|Z_t|)$. For simplicity, we consider a linear function $\mu(X_t) = \langle \boldsymbol{\beta}, X_t \rangle$ for $\boldsymbol{\beta}$ unknown to the testing setup. We choose the dimension of the features $X_t$'s as $d = 10$ in all following experiments. 

\begin{figure}[!ht]
\centering
\begin{tabular}{ll}
\begin{subfigure}[b]{0.45\textwidth} 
\centering 
\includegraphics[width=\textwidth]{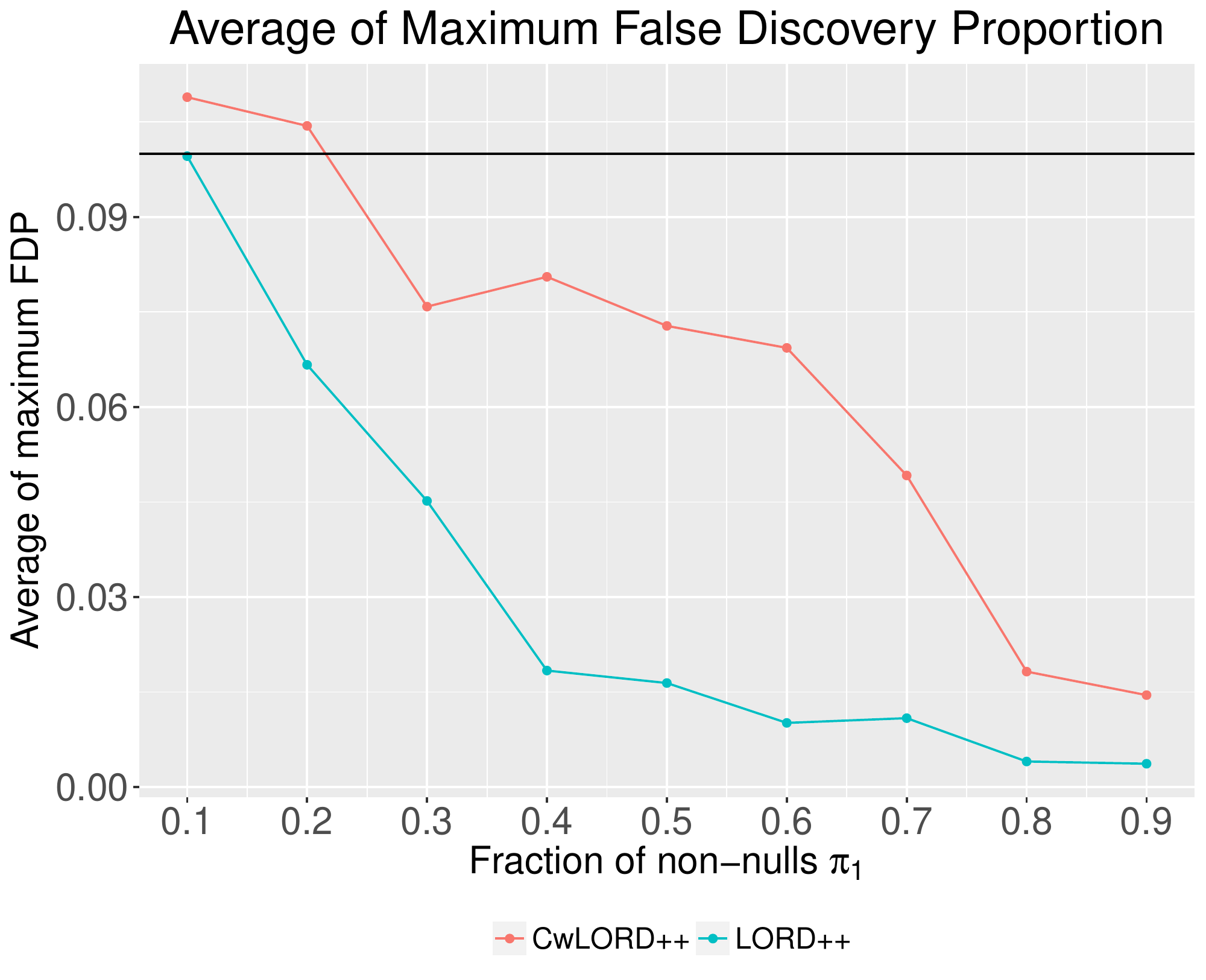} \caption{}
\end{subfigure} & 
\begin{subfigure}[b]{0.45\textwidth} \centering \includegraphics[width=\textwidth]{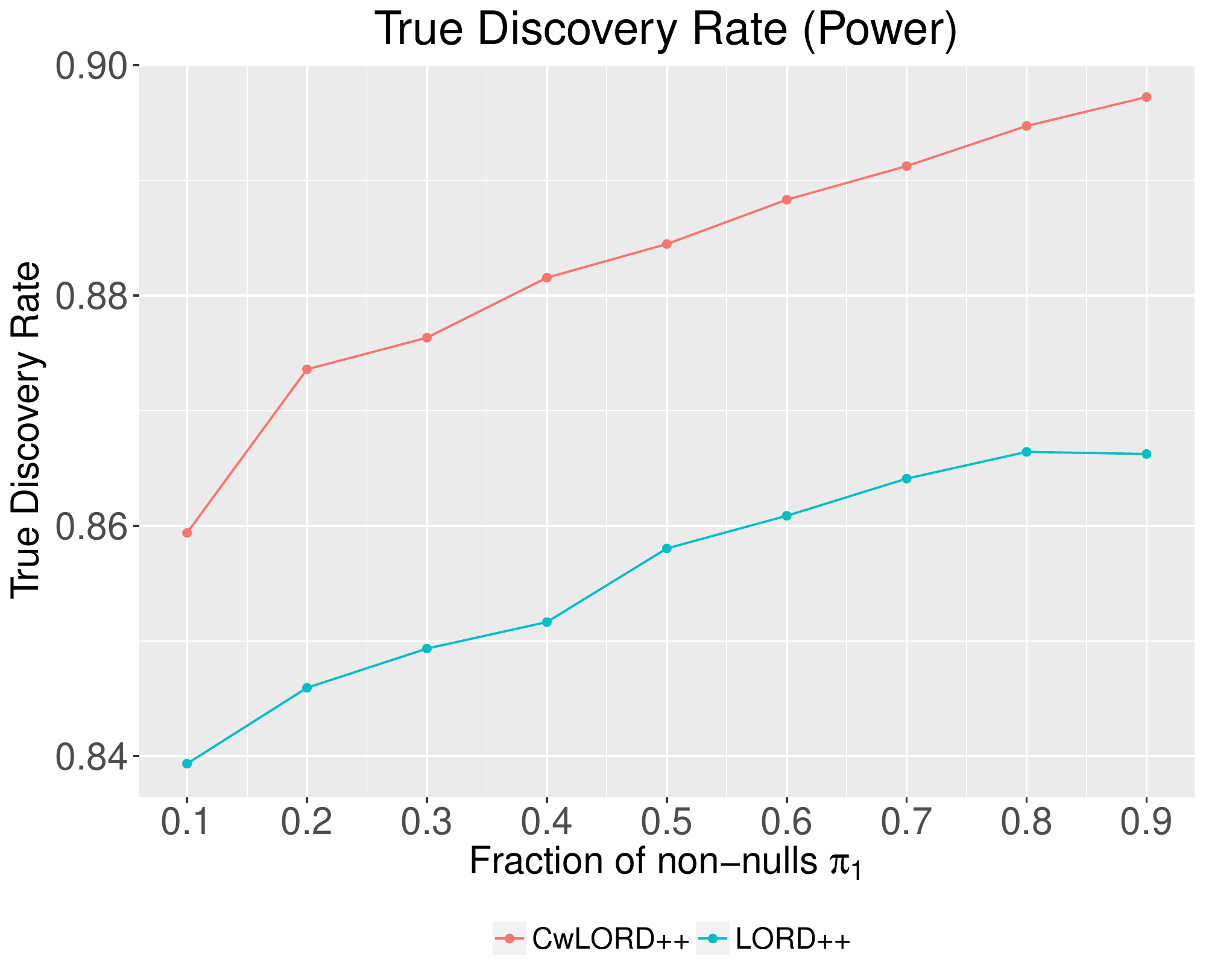}  \caption{} \end{subfigure} \\
\begin{subfigure}[b]{0.45\textwidth}\centering \includegraphics[width=\textwidth]{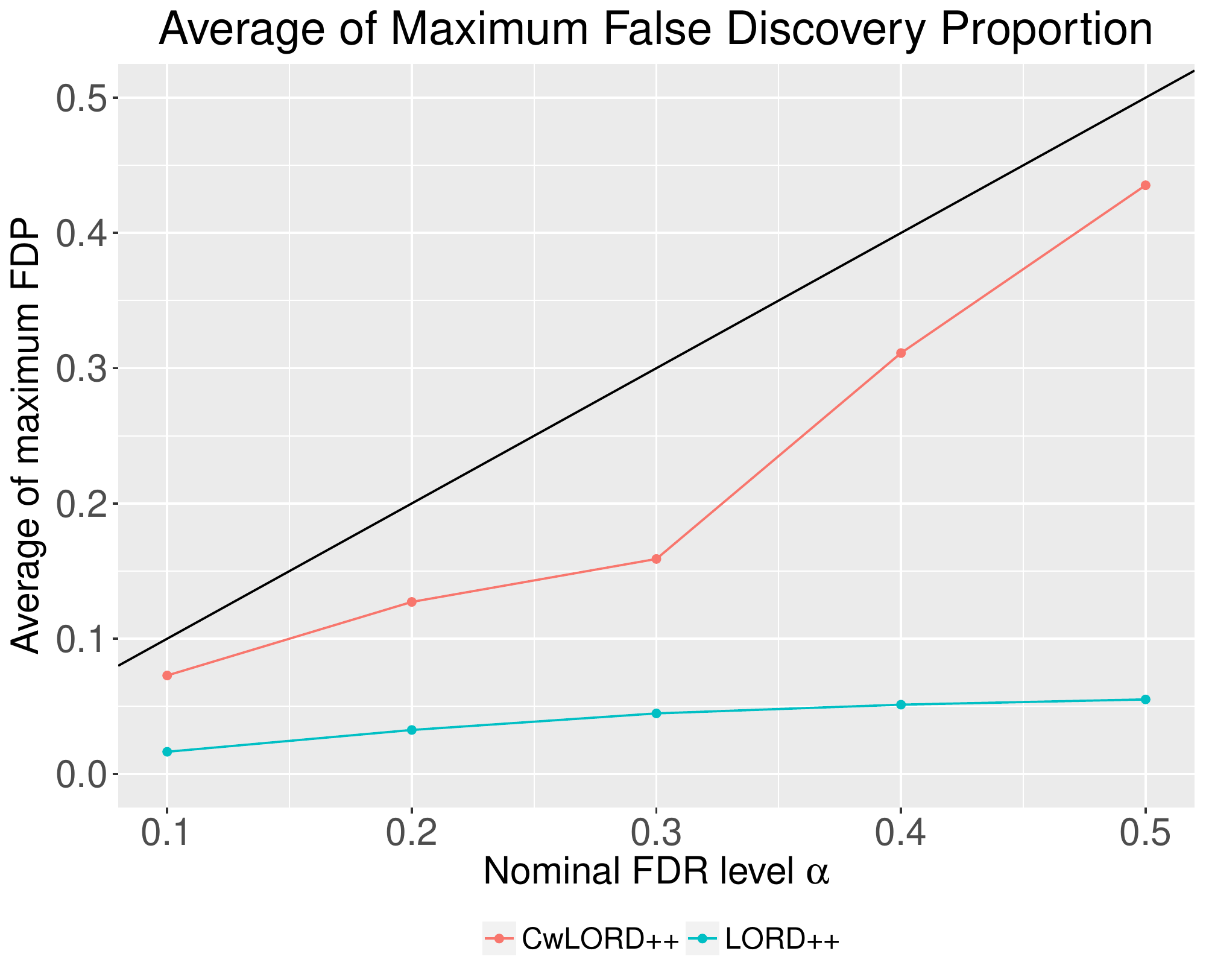}  \caption{} \end{subfigure}  &
\begin{subfigure}[b]{0.45\textwidth} \centering \includegraphics[width=\textwidth]{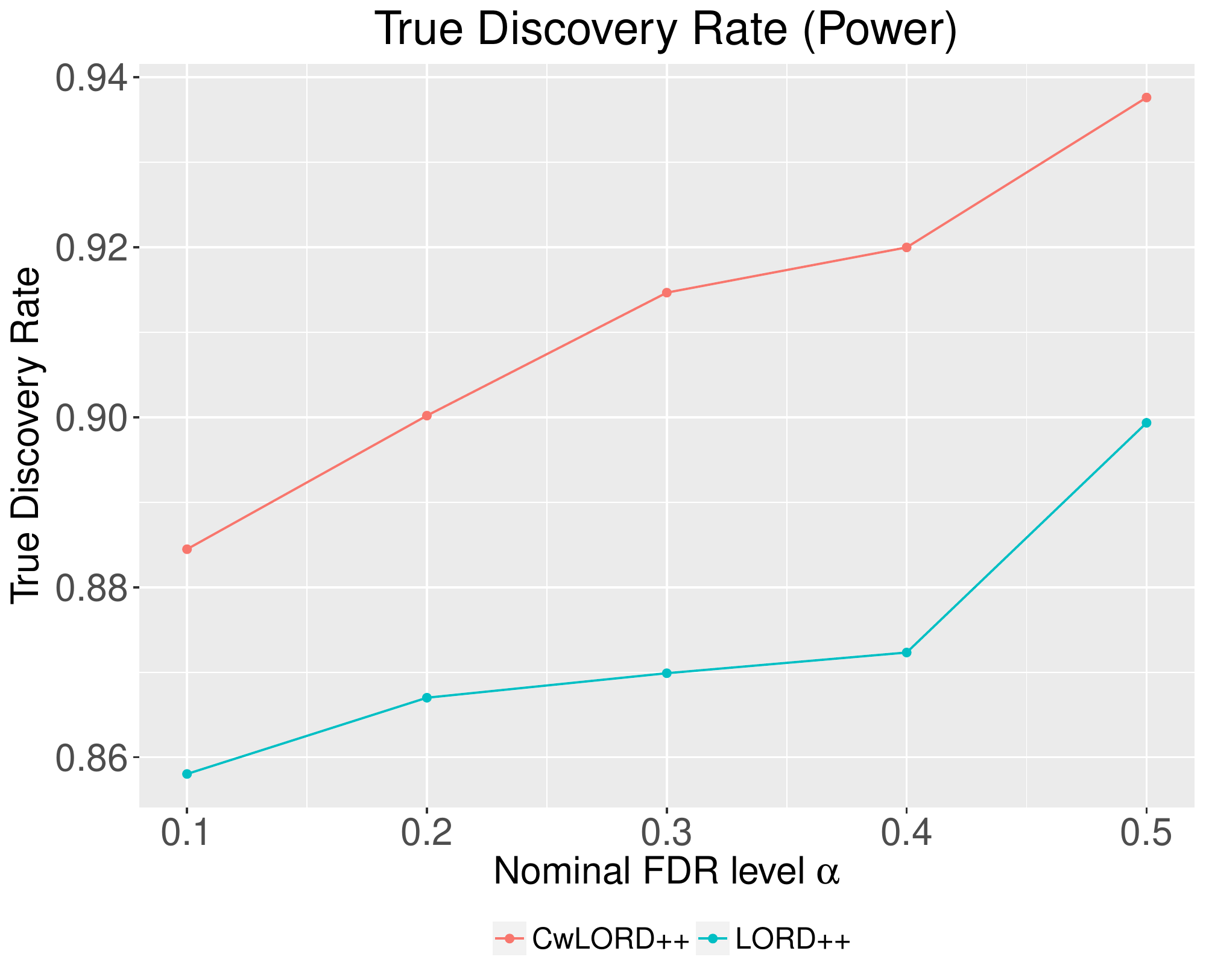} \caption{} \end{subfigure}
\end{tabular}
\caption{The top rows plots the average of max FDP and TDR (power) for our proposed CwLORD++ and LORD++ as we vary the fraction of non-nulls ($\pi_1$) under the normal means model.  The nominal FDR control level $\alpha = 0.1$. The bottom row plots the same with varying nominal FDR levels. In this case, we set the fraction of non-nulls $\pi_1=0.5$. As mentioned in the text, the average of max FDP is an overestimate of FDR. }
\label{fig:fraction}
\end{figure}

We set the total number of hypotheses as $T = 10^5$. We generate each \textit{d}-dimensional vector $X_t$ i.i.d.\ from $\cN (0, \sigma^2 I_d)$ with $\sigma^2 = 2 \log T$. The choice of the $\sigma^2 $ is to put the signals in a detectable (but not easy) region. This is because under the global null hypothesis where $Z_t \sim \cN(0,1) $ for all $t = 1, \dots, T$, we have $\max_{t \in [T]} Z_t \sim \sqrt{2 \log T}$ with high probability.  Here, $\boldsymbol{\beta}$ is a deterministic parameter vector of dimension $d = 10$, we generate the $i$th coordinate in $\boldsymbol{\beta}$  as $\beta_i \sim \text{Uniform}(-2,2)$ and fix $\boldsymbol{\beta}$ throughout the following experiments. Let $\pi_i$ denote the fraction of non-null hypotheses.
For LORD++, we choose the sequence of hyperparameters $\{\gamma_t\}$ (where $\gamma_t = 0.0722 \log(t \vee 2)/(t \exp(\sqrt{\log t}))$) as suggested by~\citep{javanmard2018online}. 

In Figure~\ref{fig:fraction}, we report the maximum FDP and the statistical power  of the two compared procedures as we vary the fraction of non-nulls $\pi_1$ and desired level $\alpha$. The average is taken over 20 repeats. 

In the first set of experiments (Figures~\ref{fig:fraction}(a) and \ref{fig:fraction}(b)), we set  $\alpha = 0.1$, and vary the fraction of non-nulls  $\pi_1$ from $0.1$ to $0.9$. We can see that FDP of both rules (CwLORD++ and LORD++) are almost always under the set level $\alpha = 0.1$ and are decreasing with the increasing fraction of non-nulls. As expected, the power increases with increasing $\pi_1$, however the power of CwLORD++ uniformly dominates that of LORD++.

Note that we take the average of maximum FDP over 20 repeats, which is an estimate for $\E[\sup \fdp]$. Due to the fact that $\E[\sup \fdp] \ge \sup \E[\fdp] = \sup \fdr$, the reported average of maximum FDP is probably higher than the true maximum FDR. In Figure~\ref{fig:fraction}(a), we see that the average of maximum FDP is almost always controlled under the black line, which means the true maximum FDR should be even lower than that level. When $\pi_1$ is really small (like $0.1$), the number of non-nulls is too sparse to make a high proportion of true discoveries, which leads to a higher average of maximum FDP that also have a higher variance in Figure~\ref{fig:fraction}(a). 

In the second set of experiments (Figures~\ref{fig:fraction}(c) and \ref{fig:fraction}(d)), we vary the nominal FDR level $\alpha$ from $0.1$ to $0.5$. The fraction of non-nulls is set as $0.5$. Again we observe while both rules have FDR controlled under nominal level (the black line), and our proposed CwLORD++ is more powerful than the LORD++ with respect to the true discovery rate. On average, we notice about $3$-$5\%$ improvement in the power with CwLORD++ when compared to LORD++. 

\subsubsection{Diabetes Prediction Problem}
In this section, we apply our online multiple testing rules to a real-life application of diabetes prediction. Machine learning algorithms are now commonly used to construct predictive health scores for patients. In this particular problem, we want a test to identify patients that are at risk of developing diabetes.  A high predicted risk score can trigger an intervention (such as medical follow-up, medical tests), which can be expensive and sometimes unnecessary,  and therefore it is important to control the fraction of alerts that are false discoveries. That is, for each patient $i$, we form the null hypothesis $H_i$ as the ``patient will not develop diabetes" versus its alternative. The dataset was released as part of a Kaggle competition\footnote{\url{http://www.kaggle.com/c/pf2012-diabetes}}, which contains de-identified medical records of 9948 patients (labeled as $1,2,\dots$).  For each patient, we have a response variable $Y$ that indicates if the patient is diagnosed with Type 2 diabetes mellitus, along with patient's biographical information and details on medications, lab results, immunizations, 
allergies, and vital signs. 
In the following, we train a predictive score based on the available records, and then will apply our online multiple testing rule rules to control FDR on test set. Our overall methodology is similar to that used by~\citet{javanmard2018online} in their FDR control experiments on this dataset.
We proceed as follows. We construct the following features for each patient. 
\begin{CompactEnumerate}
\item Biographical information: Age, height, weight, BMI (Body Mass Indicator), etc.
\item Medications: We construct TF-IDF vectors from the medication names.
\item Diagnosis information: We derive 20 categories from the ICD-9 codes and construct an one-hot encoded vector.
\item Physician specialty: We categorize the physician specialties and create features that represents how many times a patient visited certain specialist.
\end{CompactEnumerate}
We regard the biographical information of patients as treated as contextual features. The choice of using biographical information as context is loosely based on the idea of {\em personalization} common in machine learning applications.  Note that in theory, for our procedure, one could use other features too as context. 

Second, we split the dataset into four parts \textbf{Train1}, comprising 40\% of the data, \textbf{Train2}, 20\% of the data, \textbf{Test1}, 20\% of the data and \textbf{Test2}, 20\% of the data. The \textbf{Train} sets are used for training a machine learning model (\textbf{Train1}) and for computing the null distribution of test statistics (\textbf{Train2}), which allows us to compute the p-values in the \textbf{Test} sets. We first learn the neural network parameters in the CwLORD++ procedure in an online fashion by applying it to p-values in \textbf{Test1}, and then evaluate the performance of both LORD++ and CwLORD++ on \textbf{Test2}. This process is explained in more detail below.

We note that our experimental setup is not exactly identical to that of \cite{javanmard2018online}, since we are using a slightly different set of features and data cleaning for the logistic regression model. We also split the data to four subsets instead of three as they did, which gives less training data for the predictive model. Our main focus, is to compare the power of LORD++ and CwLORD++, for a reasonable feature set and machine learning model.

\paragraph{Training Process.} We start by training a logistic model similar to~\citep{javanmard2018online}.\!\footnote{Even though the chosen logistic model is one of the best performing models on this dataset in the Kaggle competition, in this paper, we do not actively optimize the prediction model in the training process.} 
Let $\boldsymbol{x_i}$ denote the features of patient $i$. We use all the features to model the probability that patient does not have diabetes through a logistic regression model as
\beq
\Pr[Y_i = 0 \mid \boldsymbol{x} = \boldsymbol{x_i}] = \frac{1}{1 + \exp (\langle \boldsymbol{\beta}, \boldsymbol{x_i} \rangle )}.
\eeq
The parameter $\boldsymbol{\beta}$ is estimated from the \textbf{Train1} set.

\paragraph{Construction of the p-values.}
Let $S_0$ be the subset of patients in \textbf{Train2} set with labels as $Y = 0$, and let $n_0 = |S_0|$. For each $i \in S_0$, we compute its predictive score as $q_i = 1/ (1+ \exp (\langle \boldsymbol{\beta}, \boldsymbol{x_i} \rangle))$.
The empirical distribution of $\{q_i: i \in S_0 \} $ serves as the null distribution of the test statistic, which allows for computation of the p-values. Explicitly, for each $j$ in either \textbf{Test1} or \textbf{Test2} sets, we compute $q_j^{\text{Test}} = 1/ (1+ \exp (\langle \boldsymbol{\beta},\boldsymbol{x_j} \rangle))$, and construct the p-value $P_j$ by 
\beq
P_j = \frac{1}{n_0} \big | \{i \in S_0: q_i \le q_j^{\text{Test}} \} \big |.
\eeq
Smaller p-value indicates that the patient has higher risk of developing diabetes. We use the p-values computed on the patients in \textbf{Test1} to train the weight function in CwLORD++, and the p-values on the patients in \textbf{Test2} to the compare performance of CwLORD++ and LORD++. Note that the training of the neural network does not utilize the labels of the hypothesis in the \textbf{Test1} set. Since the dataset does not have timestamps of hypotheses, we consider an ordering of hypotheses in the ascending order of corresponding p-values, and use this ordering for both LORD++ and CwLORD++. Note that, since the \textbf{Train} and \textbf{Test} sets are exchangeable, the null p-values will be uniform in expectation (and asymptotically uniform under mild conditions).

\paragraph{Online Hypothesis Testing Process and Results.}
We set the desired FDR control level at $\alpha = 0.2$. The set of hyperparameters $\{\gamma_t\}$ is chosen as in the synthetic data experiments.  
For the patients in \textbf{Test1} set, we use their biographical information of patients as contextual features in the training process for CwLORD++ for learning the neural network parameters. We apply the LORD++ and CwLORD++ procedures to the p-values in the \textbf{Test2} set and  compute the false discovery proportion and statistical power. Let $T_2$ be the set of patients in \textbf{Test2} set. Note that for a patient in \textbf{Test2} set, for both CwLORD++ and LORD++, the p-values are identically computed from all the features (including the patient's biographical information). This generates a sequence of p-values $(P_i)_{i \in T_2}$. Now, while LORD++ is applied to this p-value sequence directly, CwLORD++ is applied to the sequence of $(P_i,X_i)$ where $X_i$ is the biographical information of patient $i \in T_2$. For CwLORD++, the neural network parameters are fixed in this testing over the \textbf{Test2} set.

We repeat the whole process and average the results over 30 random splittings of the dataset. Table~\ref{table:diabetes} presents our final result.  We can use biographical information again as contextual features in training CwLORD++ because the p-values under the null are uniformly distributed, no matter which features are used in logistic modeling. This guarantees that the p-values under the null are independent to any features, which is the only condition we need to have a FDR control, assuming the p-values themselves are mutually independent.

Notice that while FDR is under control for both procedures, the statistical power of CwLORD++ is substantially more (about $51\%$)  than LORD++. This improvement illustrates the benefits of using contextual features for improving the power with FDR control in a typical machine learning setup. 
A possible reason for the observed increase in power with CwLORD++ is that in addition to using the labeled data in the \text{Train1} set for training a supervised model, CwLORD++ uses in an unsupervised way (i.e., without considering labels) some features of the data in the \textbf{Test1} set in its online training process with the intent of maximizing discoveries. 

One could also see the effect of training LORD++ on larger dataset. For example, previously we trained LORD++ only on \textbf{Train1} set ($40\%$ of the data) and we completely ignored the \textbf{Test1} set for LORD++. Suppose, we instead train logistic model for LORD++ (but not for CwLORD++) on the union of \textbf{Train1} and \textbf{Test1} set to make the baseline method stronger, with the same splits of the datasets. With this change,   LORD++ has FDR at $0.143$ and power at $0.427$. While this is not completely a fair comparison for CwLORD++ as we have now used more labeled data in training the logistic model for LORD++ than CwLORD++, we observe that  CwLORD++ still beats this stronger baseline with over $35\%$ power increase. 

\begin{table}
\centering
\begin{tabular}{c c c c} 
\hline \hline 
& FDR & Power \\ [0.5ex]
\hline
LORD++ & 0.147 & 0.384 \\
CwLORD++ & 0.176 & 0.580 \\ [0.5ex]
\hline
\end{tabular}	
\caption{Results from diabetes dataset with nominal FDR control level $\alpha = 0.2$.}
\label{table:diabetes}
\end{table}

\begin{figure}[!ht]
\centering \label{figures:diabetes}
\includegraphics[width =  7cm]{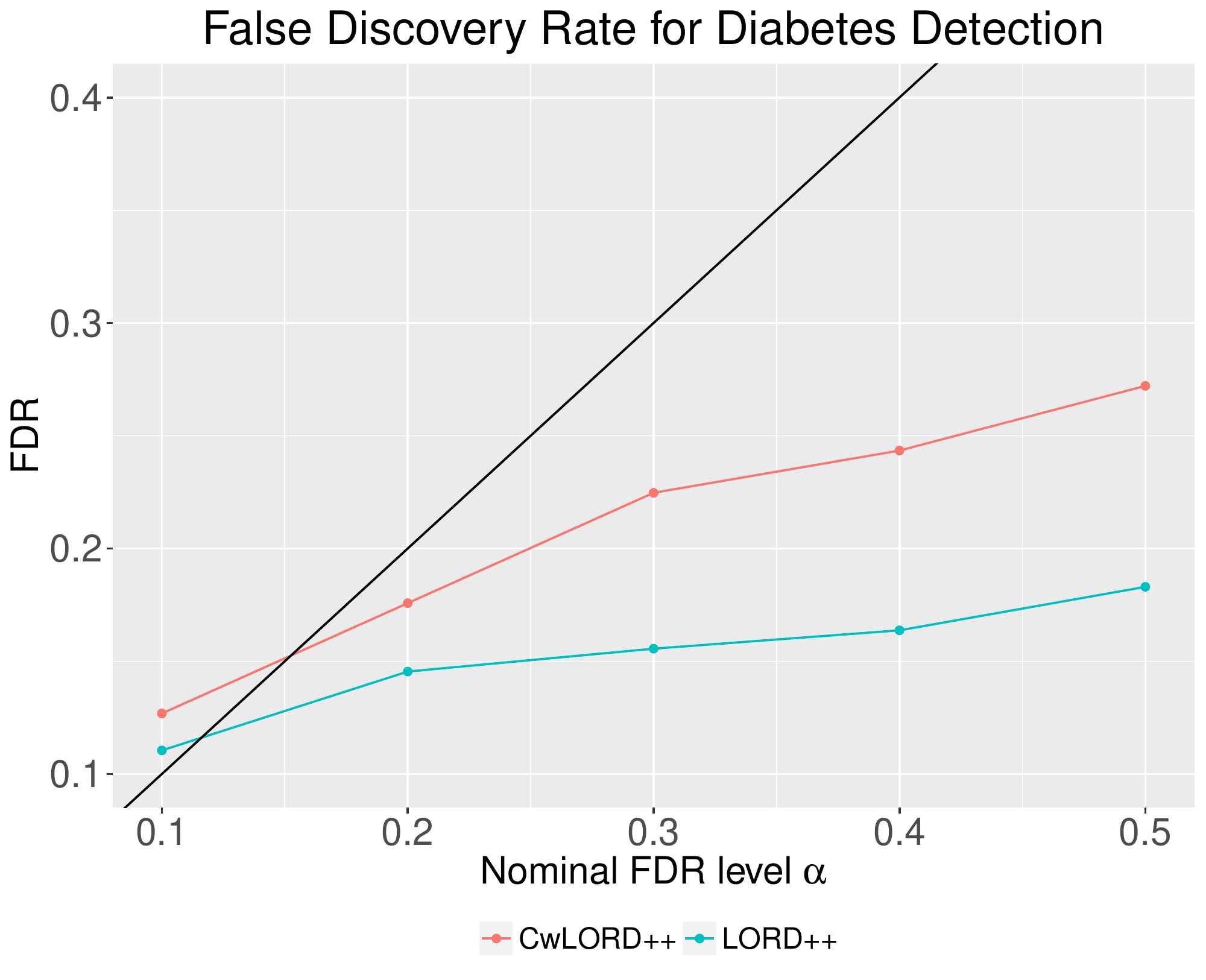}
\includegraphics[width = 7cm]{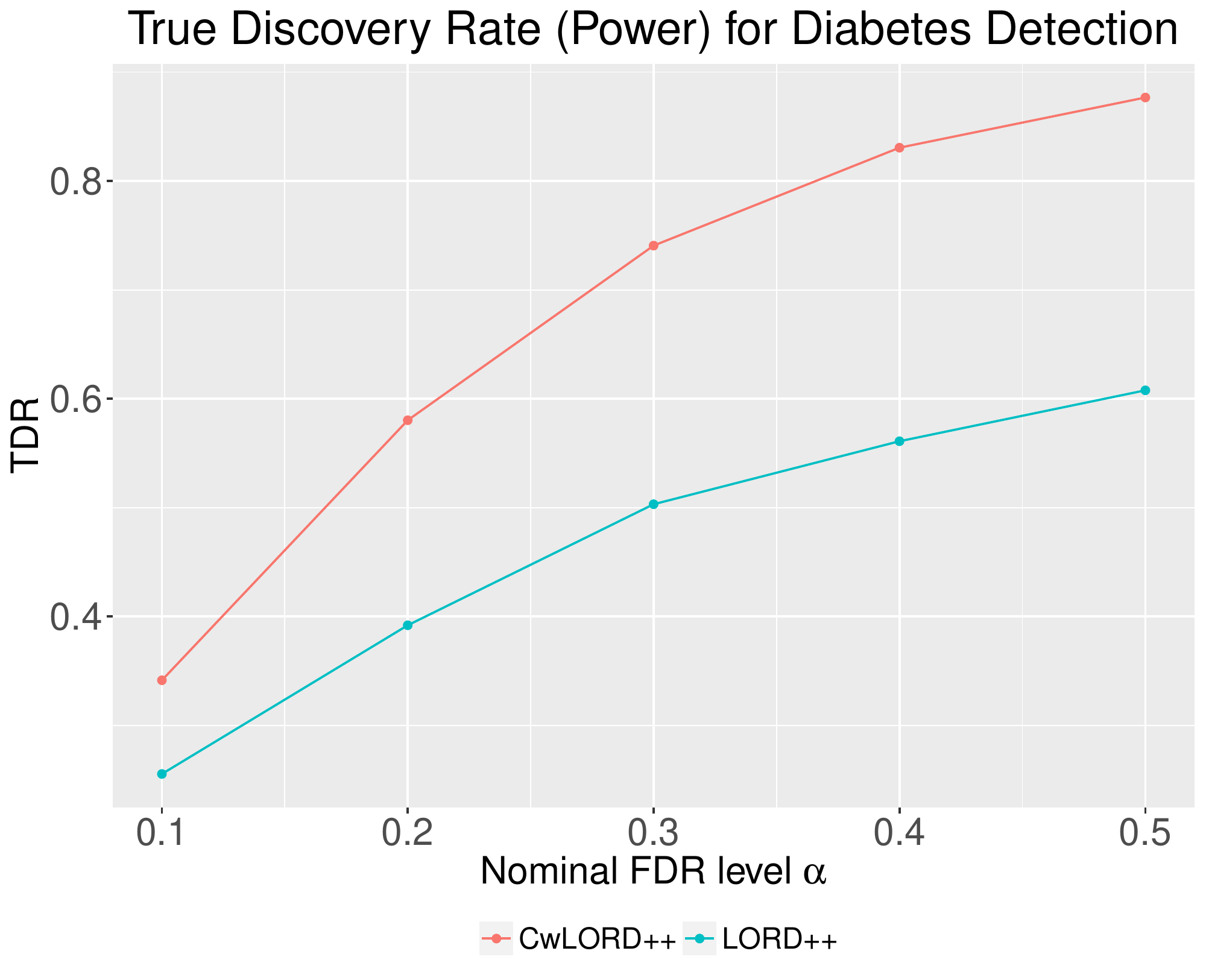}
\caption{FDR and TDR results on diabetes dataset as we vary the nominal FDR level $\alpha$. Note that the power of CwLORD++ uniformly dominates that of LORD++, with an average improvement in power of about $44\%$.}
\label{fig:diabetes}
\end{figure}

In order to further probe some of these improvements, we repeated the experiment with different nominal FDR levels ranging from $0.1$ to $0.5$. The results (see Figure~\ref{fig:diabetes}) demonstrate that our CwLORD++ procedure achieves more true discoveries than the LORD++ procedure while controlling FDR under the same level. The FDR is controlled exactly under the desired level starting around $\alpha \geq 0.15$, while it is close to the desired level even when $\alpha$ is as small as $0.1$. This phenomenon can also be observed in~\citep{javanmard2018online}, where the FDR is $0.126$ for LORD under the target level $\alpha = 0.1$. This is probably because both the experiments here and in~\citep{javanmard2018online} do not adjust for the dependency among the p-values, which violates the theoretical assumption behind the FDR control proof, and is more of a concern when target $\alpha$ level is small.

\subsubsection{Gene Expression Data}
Our final set of experiments are on gene expression datasets. In particular, we use the Airway RNA-Seq  and GTEx datasets\footnote{Datasets are at: \url{https://www.dropbox.com/sh/wtp58wd60980d6b/AAA4wA60ykP-fDfS5BNsNkiGa?dl=0}.} as also studied by~\citep{xia2017neuralfdr}. For both experiments, we use the original ordering of hypotheses as provided in the datasets. Since we don't know the ground truth, we only report the empirical FDR and the empirical discovery rate number in the experiments.

In the Airway RNA-Seq data, the aim is to identify glucocorticoid responsive (GC) genes that modulate cytokine function in airway smooth muscle cells. The dataset contains $n = 33469$ genes. The p-values are obtained in regular two-sample differential analysis of gene expression levels. Log counts of each gene serves as the contextual feature in this case.  \figref{airway} reports the empirical FDR and the discovery number. We see that our CwLORD++ procedure make about $10\%$ more discoveries than the LORD++ procedure.

In the GTEx study, the question is to quantify the expression Quantitative Trait Loci (eQTLs) in human tissues. In the eQTL analysis, the association of each pair of single nucleotide polymorphism (SNP) and nearby gene is tested. The p-value is computed under the null hypothesis that the SNP genotype is not correlated with the gene expression. The GTEx dataset contains 464,636 pairs of SNP-gene combination from chromosome 1 in a brain tissue (interior caudate). Besides the p-values from the correlation test, contextual features may affect whether a SNP is likely to be an eQTL, and thus we can discover more of the true eQTLs if we utilize them in tests. For the tests, we consider the three contextual features studied by~\citep{xia2017neuralfdr}: 1) the distance (GTEx-dist) between the SNP and the gene (measured in log base-pairs); 2)
the average expression (GTEx-exp) of the gene across individuals (measured in log rpkm); and 3) the
evolutionary conservation measured by the standard PhastCons scores (GTEx-PhastCons). We apply LORD++ to the p-values, and CwLORD++ to the p-value, contextual feature vector pairs. \figref{gtex} reports the empirical FDR and the discovery number. In GTEx 1D experiments, we use each contextual feature separately in CwLORD++, which increases the discovery number by $5.5\%$ (using GTEx-dist), $2.6\%$ (using GTEx-exp), $2\%$ (using GTEx-PhastCons) compared to the LORD++ procedure.  In GTEx 2D experiments, GTEx-dist and GTEx-exp  features are used together in CwLORD++ as a two-dimensional vector, and here CwLORD++ procedure make about $6.1\%$ more discoveries than the LORD++ procedure. In GTEx 3D experiments, CwLORD++ uses all the three available features and makes about $6.6\%$ more discoveries than LORD++. These results indicate that additional contextual information could be helpful in making more discoveries.
\begin{figure}[H]
\vspace*{-.5in}
\centering 
	\begin{subfigure}[b]{0.45\textwidth} \centering \includegraphics[width=\textwidth]{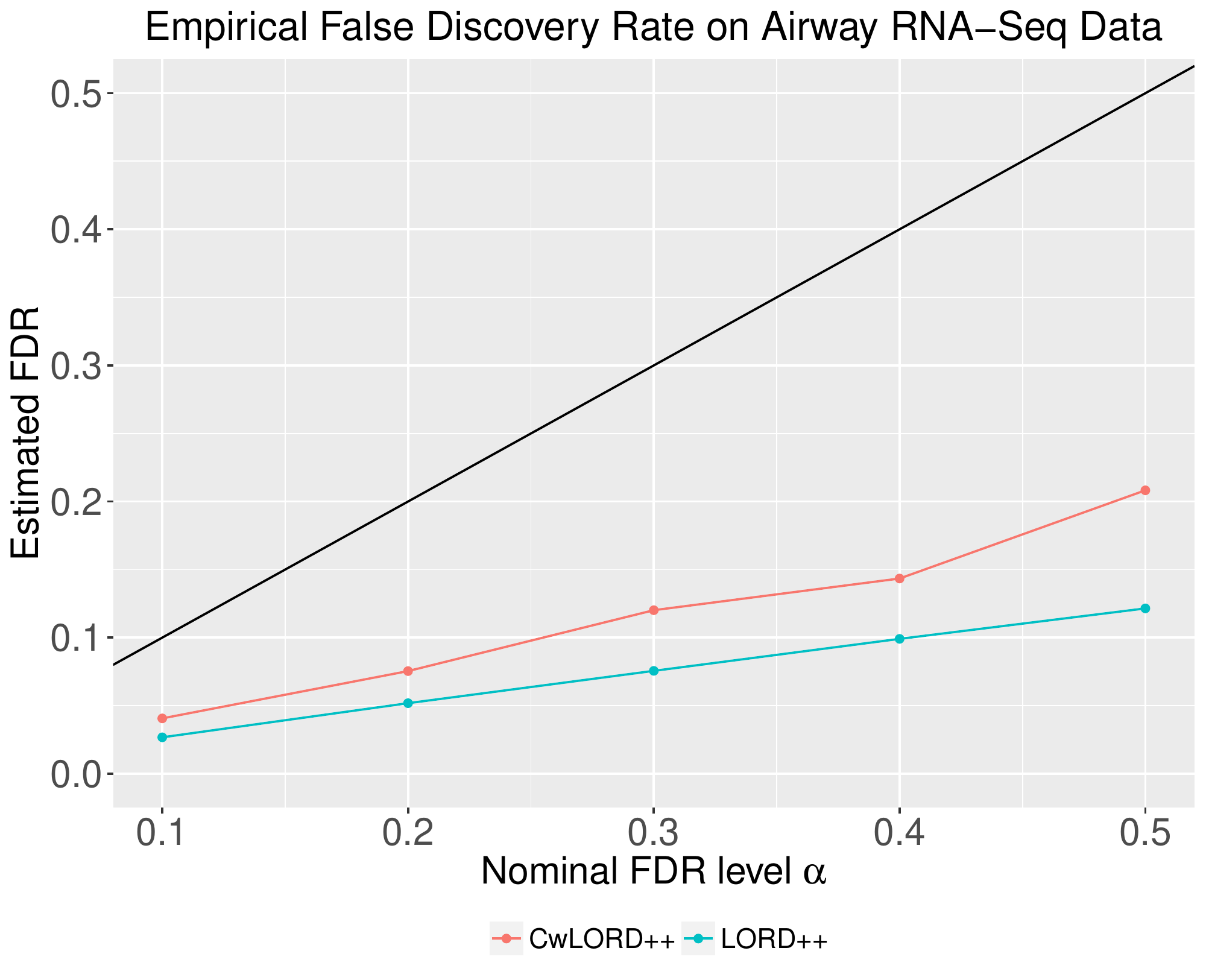} \end{subfigure}
	\begin{subfigure}[b]{0.45\textwidth} \centering \includegraphics[width=\textwidth]{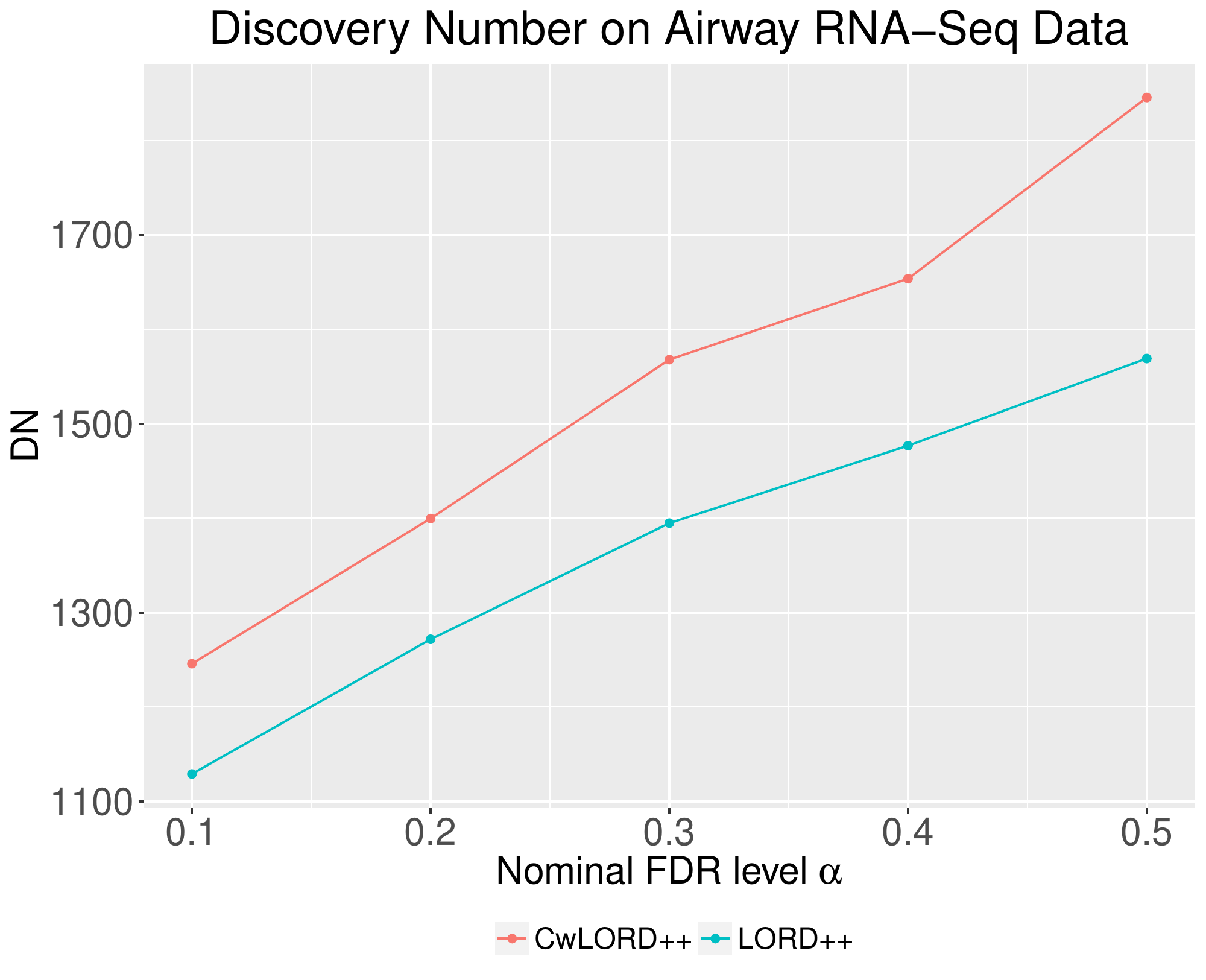} \end{subfigure} 
	\caption{Results on Airway RNA-Seq dataset.}
	\label{fig:airway}
	\vspace*{2ex}
	\begin{subfigure}[b]{0.45\textwidth} \centering \includegraphics[width=\textwidth]{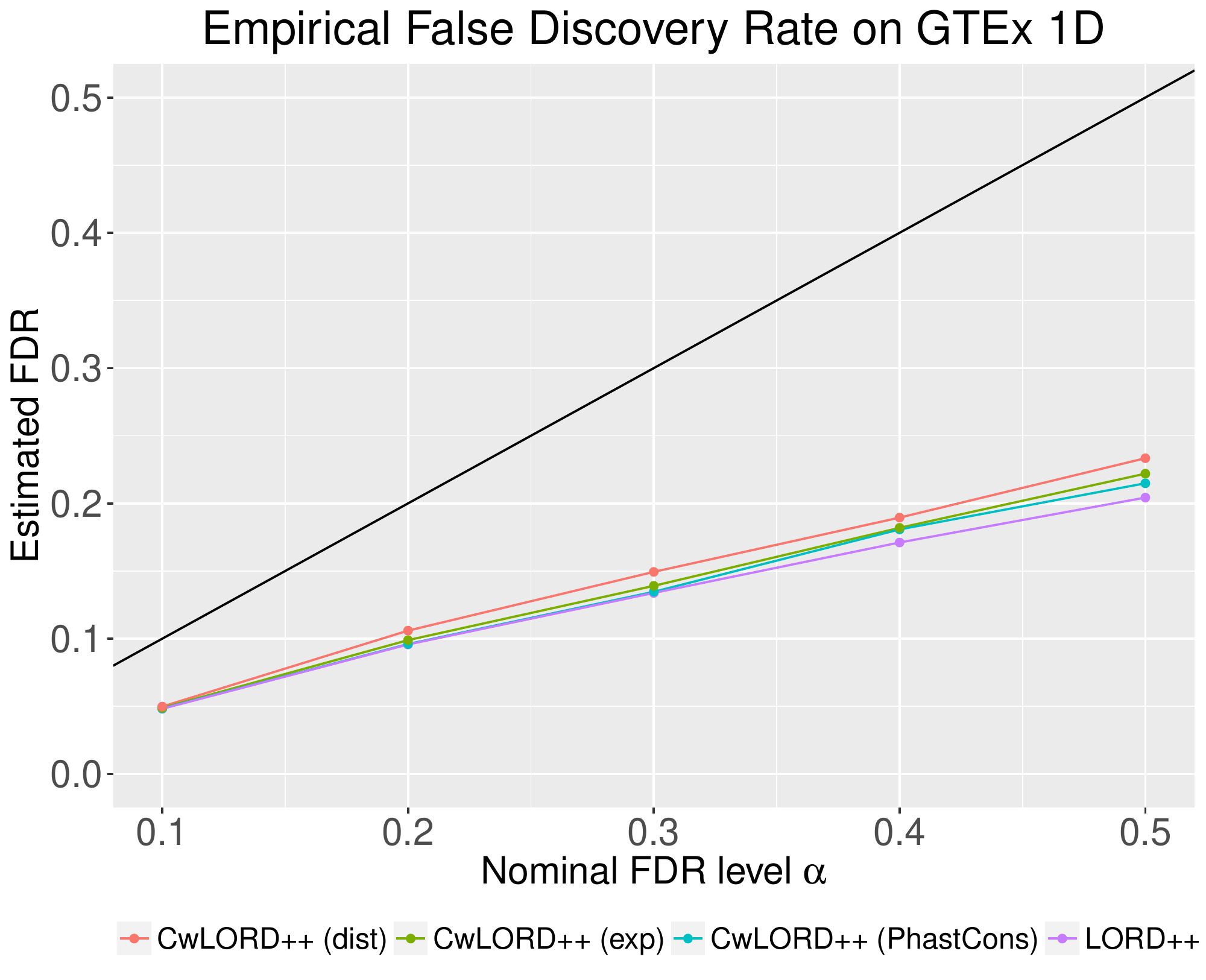} \end{subfigure}
	\begin{subfigure}[b]{0.45\textwidth} \centering \includegraphics[width=\textwidth]{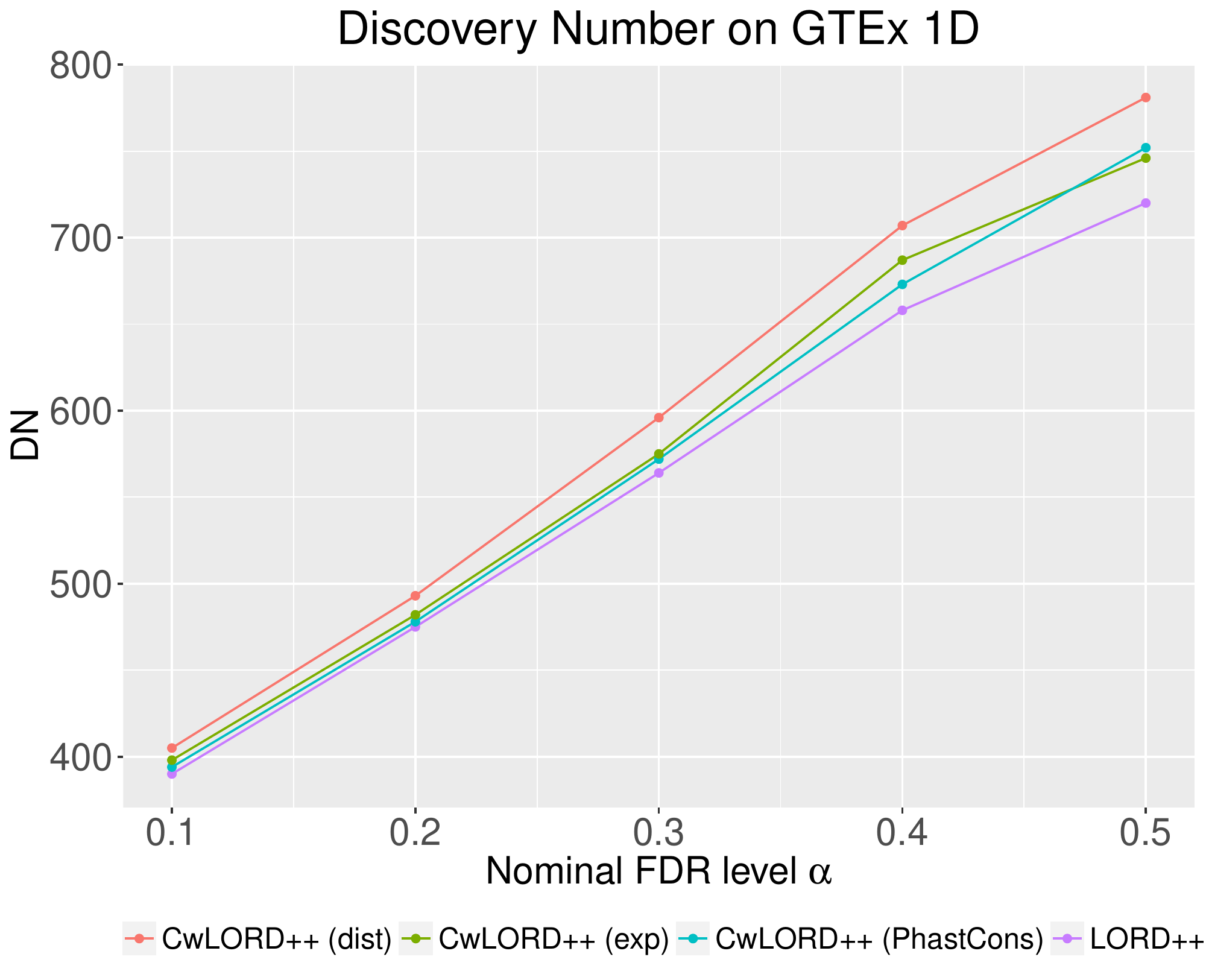} \end{subfigure}
	\begin{subfigure}[b]{0.45\textwidth} \centering \includegraphics[width=\textwidth]{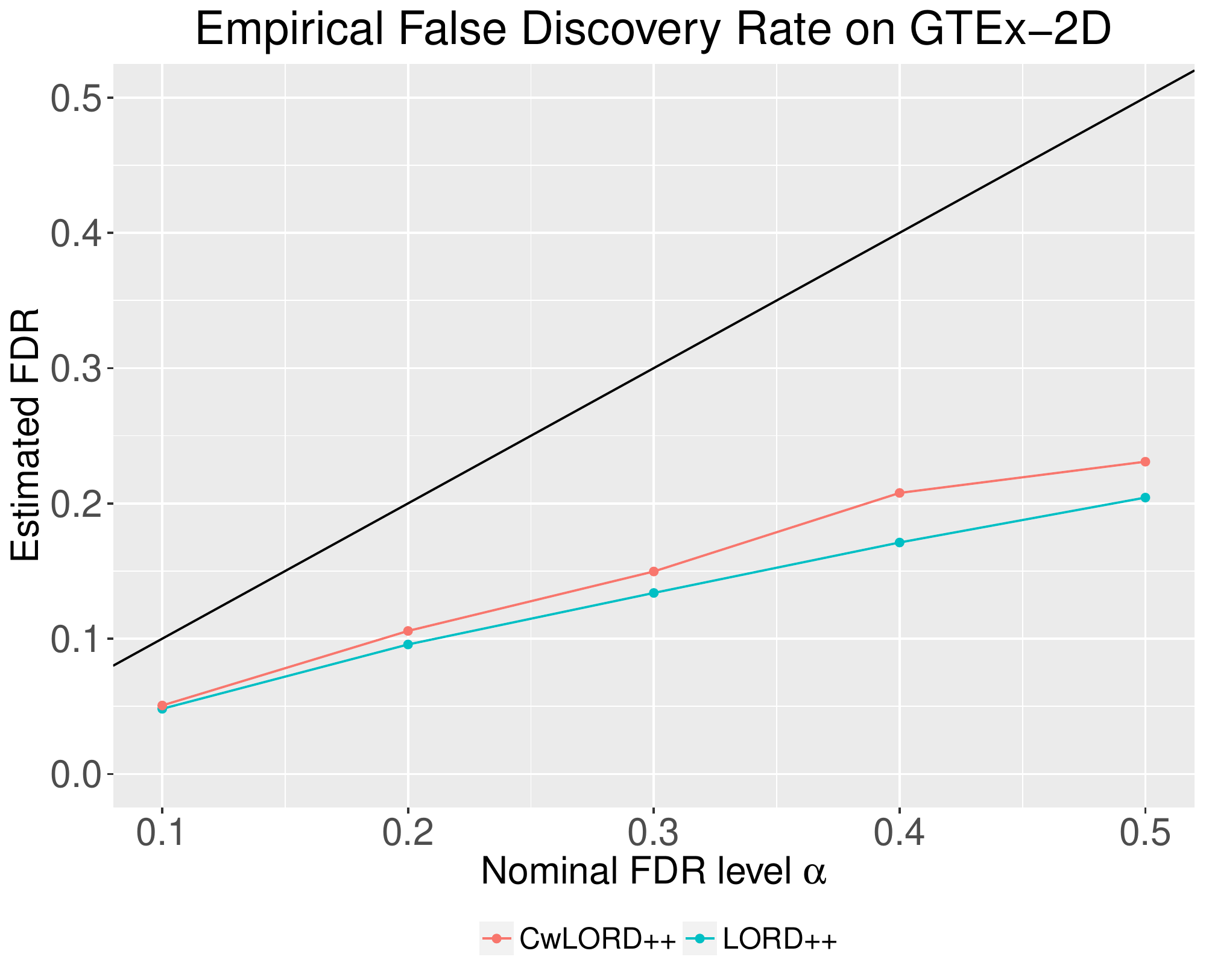} \end{subfigure}
	\begin{subfigure}[b]{0.45\textwidth} \centering \includegraphics[width=\textwidth]{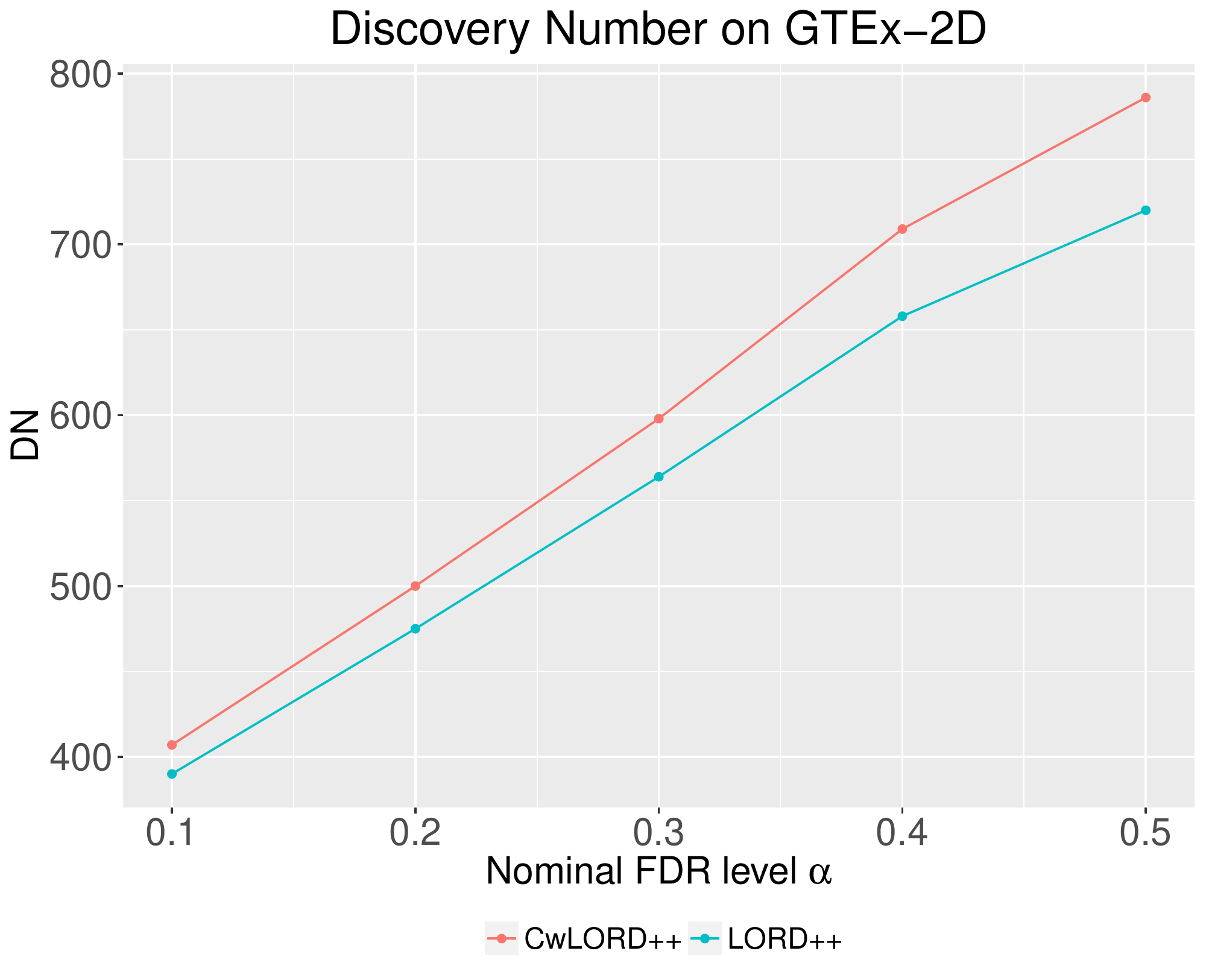} \end{subfigure}
	\begin{subfigure}[b]{0.45\textwidth} \centering \includegraphics[width=\textwidth]{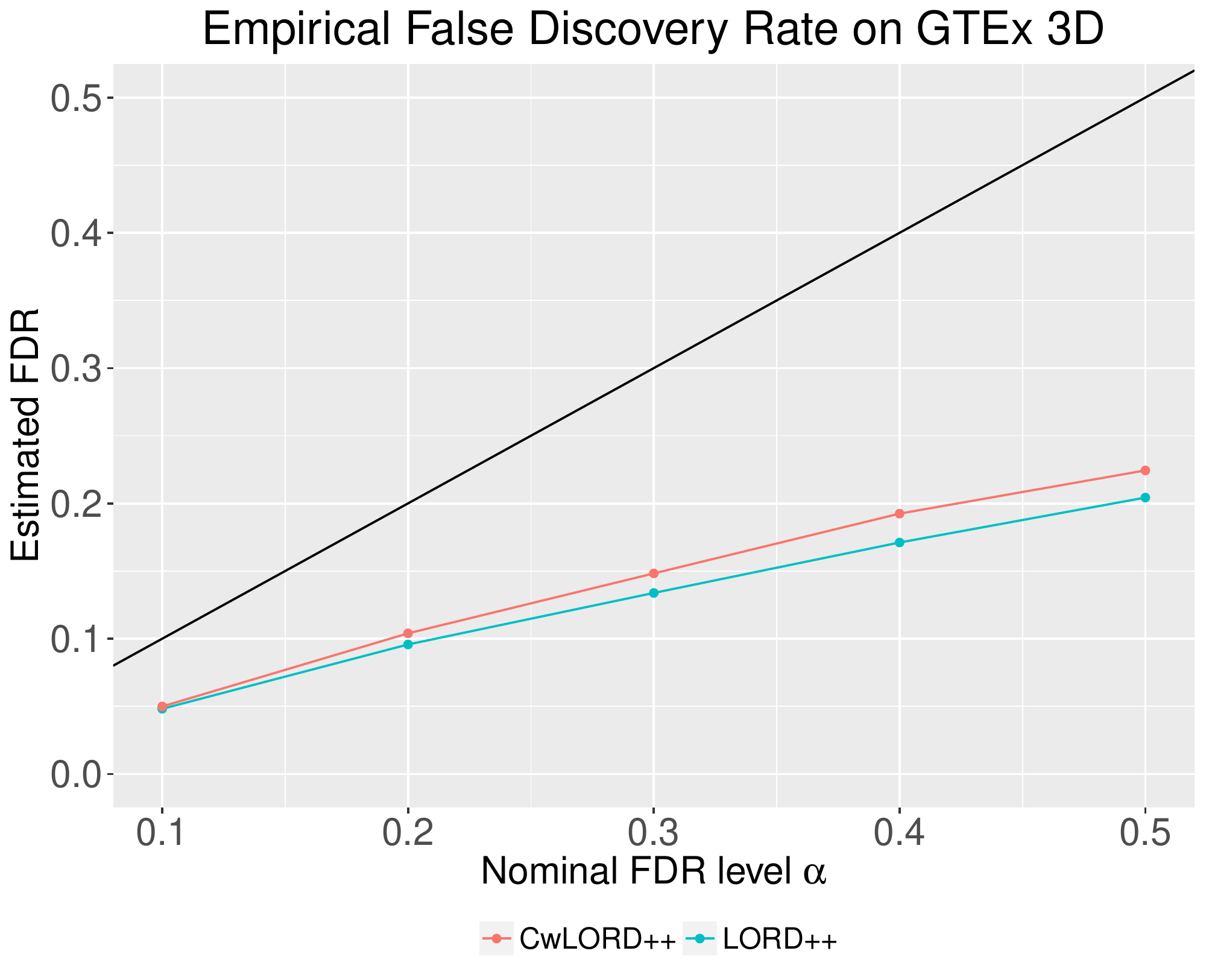} \end{subfigure}
	\begin{subfigure}[b]{0.45\textwidth} \centering \includegraphics[width=\textwidth]{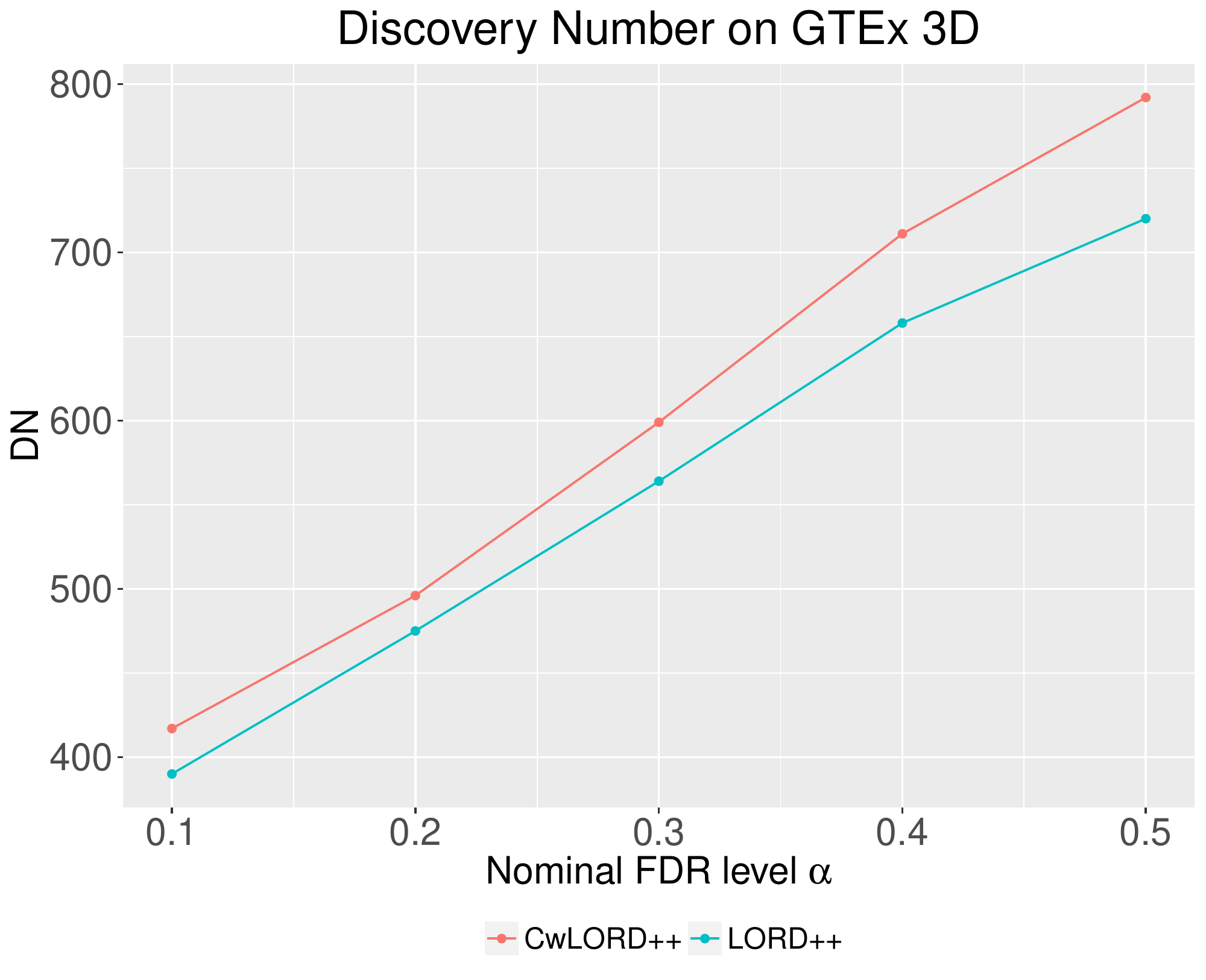} \end{subfigure}
	\caption{Results on GTEx dataset.}
	\label{fig:gtex}
\end{figure}

\section{Conclusion}
In this paper, we introduced a new, rich class of online testing rules which incorporates the available contextual information in the testing process, and can control both the online FDR and mFDR under some standard assumptions. We then focused on a subclass of these rules, based on weighting the significance levels using contextual features, to derive a practical algorithm that learns a parametric weight function in an online fashion to maximize the number of empirical discoveries. We also theoretically proved that under a natural informative-weighting assumption, our procedure can achieve higher statistical power compared to a popular online FDR control procedure, while still controlling the FDR at the same level. Our experiments on both synthetic and real datasets demonstrates the superior performance of our procedure compared to other state-of-the-art online multiple testing procedures.

\section*{Acknowledgements}
We are grateful for the numerous helpful discussions with Dean Foster, David Heckerman, Nina Mishra, and Robert Stine. 
\bibliographystyle{apa-good}
\bibliography{ref}
\appendix

\section{Missing Details from Section~\ref{sec:CWFDR}}\label{app:CWFDR}

\paragraph{Identifiability of $f_1(p\mid X)$.}
We present a simple example from~\citet{lei2018adapt} that illustrates why $f_1(p\mid X)$ (distribution of $p$ under the alternate) is not identifiable.  Consider the following mixture model:
\begin{eqnarray*}
& H_t \mid X_t \stackrel{\text{i.i.d.}}\sim \text{Bernoulli}(\pi_1), & \\
&P_t \mid H_t, X_t  =  \begin{cases} 
	\text{Uniform(0,1)} & \text{if } H_t = 0,\\ 
	f_1(p\mid X_t) & \text{if } H_t = 1.
\end{cases}&
\end{eqnarray*}
Now consider the conditional mixture density $f(p\mid X) = (1-\pi_1)+ \pi_1 f_1(p\mid X)$. Note that the $H_t$'s are not observed. Thus, while $f$ is identifiable from the data, $\pi_1$ and $f_1$ are not: for example, $\pi_1 = 0.5$, $f_1(p \mid  X) = 2(1-p)$ and $\pi_1 = 1$, $f_1(p \mid  X) = 1.5-p$ result in exactly the same mixture density $f(p\mid X)$.

\begin{lem}[\lemref{super-uniform} Restated]
	Let $g: \{0, 1\}^{T} \to \mathbb{R}$ be any coordinatewise non-decreasing function such that $g(\bR) > 0$ for any vector $\bR \neq (0, \dots, 0)$. Then for any index $t \le T$ such that $t \in \cH^0$, we have
	\begin{align*}
		&\E \bigg [\frac{\1 \{P_t \le \alpha_t(R_1, \dots, R_{t-1}, X_1, \dots, X_t)\}}{g(R_1, \dots, R_T) \vee 1}  \bigg \vert \sigma(\cF^{t-1} \cup \cG^t) \bigg ] 
		&\le \E \bigg [ \frac{\alpha_t (R_1, \dots, R_{t-1}, X_1, \dots, X_T)}{g(R_1, \dots, R_T) \vee 1} \bigg \vert \sigma(\cF^{t-1} \cup \cG^t) \bigg ].
	\end{align*}
\end{lem}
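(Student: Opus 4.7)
The plan is to use a leave-one-out technique that decouples the dependence on $P_t$ in the numerator from the random denominator, so that the super-uniformity statement in~\eqref{eqn:super-uniformity} can be invoked directly. I would define a ``hallucinated'' decision sequence that pretends hypothesis $t$ was rejected regardless of the actual value of $P_t$: set $\tilde{R}_t := 1$, and for $i > t$ inductively
\[
\tilde{R}_i := \1\{P_i \le \alpha_i(R_1,\dots,R_{t-1},1,\tilde{R}_{t+1},\dots,\tilde{R}_{i-1},X_1,\dots,X_i)\}.
\]
The key observation is that on the event $\{P_t \le \alpha_t\}$ we already have $R_t = 1$, so the hallucinated trajectory coincides with the real one and
\[
\frac{\1\{P_t \le \alpha_t\}}{g(R_1,\dots,R_T)\vee 1} \;=\; \frac{\1\{P_t \le \alpha_t\}}{g(R_1,\dots,R_{t-1},1,\tilde{R}_{t+1},\dots,\tilde{R}_T)\vee 1}
\]
pointwise (and both sides are zero otherwise).

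Next I would take conditional expectations and exploit the following independence picture: because $t \in \cH^0$, mutual independence of the p-values together with independence of $P_t$ from all the $X_i$'s implies that $P_t$ is independent of the sigma-field generated by $\sigma(\cF^{t-1}\cup\cG^t)$ augmented with the future p-values $(P_{t+1},\dots,P_T)$ and the future contexts $(X_{t+1},\dots,X_T)$. The hallucinated denominator is measurable with respect to this enlarged sigma-field, so conditioning on it I can pull the denominator outside and apply super-uniformity (using $\alpha_t \in \sigma(\cF^{t-1}\cup\cG^t)$) to replace the indicator by $\alpha_t$. A tower-property step back down to $\sigma(\cF^{t-1}\cup\cG^t)$ then yields
\begin{align*}
&\E\!\left[\frac{\1\{P_t \le \alpha_t\}}{g(R_1,\dots,R_{t-1},1,\tilde{R}_{t+1},\dots,\tilde{R}_T)\vee 1}\,\Big|\,\sigma(\cF^{t-1}\cup\cG^t)\right] \\
&\qquad \le \E\!\left[\frac{\alpha_t}{g(R_1,\dots,R_{t-1},1,\tilde{R}_{t+1},\dots,\tilde{R}_T)\vee 1}\,\Big|\,\sigma(\cF^{t-1}\cup\cG^t)\right].
\end{align*}

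To close the argument I would couple the hallucinated denominator back to the real one by monotonicity. A short induction on $i > t$ using the monotonicity condition~\eqref{eq5} on $\alpha_i$ gives $\tilde{R}_i \ge R_i$: replacing $R_t$ by $1 \ge R_t$ in the $t$th slot can only increase $\alpha_{t+1}$, so $\tilde{R}_{t+1} \ge R_{t+1}$, and this propagates coordinatewise to every later step because larger $R$-arguments feed into larger $\alpha$'s, hence larger rejections. Combined with $1 \ge R_t$ itself and the coordinatewise monotonicity of $g$, this yields $g(R_1,\dots,R_{t-1},1,\tilde{R}_{t+1},\dots,\tilde{R}_T) \ge g(R_1,\dots,R_T)$; dividing (with the $\vee 1$) preserves the inequality and gives the claim. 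The main subtlety, and the only place the hypotheses on the p-values really enter, is the sigma-field bookkeeping in the second step: one has to carefully enlarge the conditioning sigma-field to include the future $P$'s and $X$'s so that the hallucinated denominator becomes measurable, and then justify conditional super-uniformity via the stated mutual independence of the p-values and the independence of the null $P_t$ from all contextual features.
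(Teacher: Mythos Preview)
Your proposal is correct and follows essentially the same leave-one-out argument as the paper: your ``hallucinated'' sequence obtained by forcing $\tilde{R}_t=1$ is exactly the paper's $\widetilde{\bR}^{-t}$ obtained by setting $P_t=0$, and the three steps (pointwise equality on $\{P_t\le\alpha_t\}$, super-uniformity after decoupling the denominator via independence of the null $P_t$ from the other $P_i$'s and all $X_i$'s, and the monotonicity-based comparison $g(\widetilde{\bR}^{-t})\ge g(\bR)$) match the paper's proof line by line. Your explicit enlargement of the conditioning sigma-field to include the future $P$'s and $X$'s is a slightly more careful rendition of what the paper states informally as ``$P_t$ is independent of $\widetilde{\bR}^{-t}$.''
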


\begin{proof}
Let $\bP = (P_1, \dots, P_T)$ be the sequence of p-values, and $\bX = (X_1, \dots, X_T)$ be the sequence of the contextual feature vectors until sometime $T$. We define a ``leave-one-out" vector of p-value as $\widetilde{\bP}^{-t} = (\widetilde{P}_1, \dots, \widetilde{P}_{T})$, which was obtained from $\bP$ by setting $P_t = 0$, i.e.,

\beq 
\widetilde{P}_i = \begin{cases} 
	P_i & \quad \text{if } i \neq t,\\ 
	0 &  \quad \text{if } i = t.
\end{cases}
\eeq
Let $\bR = (R_1, \dots, R_T)$ be the sequence of decisions on the input $\bP$ and $\bX$, and $\widetilde{\bR}^{-t} = (\widetilde{R}_1, \dots, \widetilde{R}_T)$ be the sequence of decisions by applying the same rule on the input $\bP^{-t}$ and $\bX$. Note here we just set one p-value as zero but are not changing the contextual feature vectors.

By the construction of p-values, we have that $R_i  = \widetilde{R}_i$ for $i < t$, and hence
\beq
\alpha_i(R_1, \dots, R_{i-1}, X_1, \dots, X_i) = \alpha_i(\widetilde{R}_1, \dots, \widetilde{R}_{i-1}, X_1, \dots, X_i), \quad \text{for all } i \le t.
\eeq
We also know that $\widetilde{R}_{t} = 1$ always holds due to the fact $\widetilde{P}_t = 0 \le \alpha_t$. Therefore, if the event $\{P_t \le \alpha_t(R_1, \dots, R_{t-1}, X_1, \dots, X_t )\}$ occurs, we have $R_t = \widetilde{R}_{t}$ and thus $\bR = \widetilde{\bR}^{-t}$.

From the above arguments, we conclude that 
\beq
\frac{\1 \{P_t \le \alpha_t(R_1, \dots, R_{t-1}, X_1, \dots, X_t)\}}{g(\bR) \vee 1} = \frac{\1 \{P_t \le \alpha_t(R_1, \dots, R_{t-1}, X_1, \dots, X_t)\}}{g(\widetilde{\bR}^{-t}) \vee 1}.
\eeq

Due to the fact that $t \in \cH^0$ ($H_t=0$), $P_t$ is independent to all contextual features $\bX$ by assumption (as $P_i$'s and $X_i$'s are independent under the null), which gives that $P_t$ is independent of $\sigma(\cF^{t-1} \cup \cG^t)$. And since $P_t$ is independent of $\widetilde{\bR}^{-t}$, we have,
\begin{align}
	 & \E \bigg [\frac{\1 \{P_t \le \alpha_t(R_1, \dots, R_{t-1}, X_1, \dots, X_t)\}}{g(\bR) \vee 1}  \bigg \vert \sigma(\cF^{t-1} \cup \cG^t) \bigg ] \nonumber \\
	& = \E \bigg [\frac{\1 \{P_t \le \alpha_t(R_1, \dots, R_{t-1}, X_1, \dots, X_t)\}}{g(\widetilde{\bR}^{-t}) \vee 1}  \bigg \vert \sigma(\cF^{t-1} \cup \cG^t) \bigg ] \nonumber \\
	& \le \E \bigg [\frac{ \alpha_t(R_1, \dots, R_{t-1}, X_1, \dots, X_t) }{g(\widetilde{\bR}^{-t}) \vee 1}  \bigg \vert \sigma(\cF^{t-1} \cup \cG^t) \bigg ]  \label{ineq1} \\ 
	& \le  \E \bigg [\frac{ \alpha_t(R_1, \dots, R_{t-1}, X_1, \dots, X_t) }{g(\bR) \vee 1}  \bigg \vert \sigma(\cF^{t-1} \cup \cG^t) \bigg ]  \label{ineq2}
\end{align}
where inequality \eqref{ineq1} follows by taking expectation with respect to $P_t$ and using conditional super-uniformity \eqref{eqn:super-uniformity}, and inequality \eqref{ineq2} is derived by the following observation.

Since $\widetilde{P}_t = 0 \le \alpha_t$, we have $\widetilde{R}_{t} = 1 \ge R_t$. Due to the monotonicity of the significance levels, we have 
\beq
\alpha_i(\widetilde{R}_1, \dots, \widetilde{R}_{i-1}, X_1, \dots, X_i ) \ge  \alpha_i(R_1, \dots, R_{i-1}, X_1, \dots, X_i), \quad \text{for all } i > t,
\eeq
ensuring $\widetilde{R}_i \ge R_i$ for all $i $, and thus $g(\widetilde{\bR}^{-t}) \ge g(\bR)$ by the non-decreasing assumption on the function $g$.
\end{proof}

\begin{thm} [\thmref{fdr-control} Restated] \label{thm:proof-fdr}
Consider a sequence of $((P_t,X_t))_{t \in \mathbb{N}}$  of p-values and contextual features. If the p-values $P_t$'s are independent, and additionally $P_t$ are independent of all $(X_t)_{t \in \mathbb{N}}$ under the null $($whenever $H_t=0)$, then for any monotone contextual generalized alpha-investing rule $($i.e., satisfying conditions~\eqref{eq1},~\eqref{eq2},~\eqref{eq3},~\eqref{eq4}, and~\eqref{eq5}$)$, we have online FDR control,
	\beq
	\sup_{T} \; \text{\em FDR}(T) \le \alpha.
	\eeq
\end{thm}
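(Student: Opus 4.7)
The plan is to adapt the standard FDR-control template for monotone generalized alpha-investing rules to the contextual setting, with the main new ingredient being that the conditioning sigma-field at step $t$ is $\sigma(\cF^{t-1}\cup\cG^t)$ rather than $\cF^{t-1}$, so that the super-uniformity lemma (\lemref{super-uniform}) absorbs the dependence on the contextual features. Once \lemref{super-uniform} is in hand, the remainder is a wealth-accounting argument that leverages both halves of the upper bound~\eqref{eq4} on the reward $\psi_t$.

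Concretely, the first step is to decompose
\[
\text{FDR}(T) \;=\; \E\!\left[\frac{V(T)}{R(T)\vee 1}\right] \;=\; \sum_{t\in\cH^0,\,t\le T}\E\!\left[\frac{R_t}{R(T)\vee 1}\right],
\]
and, for each null index $t$, apply \lemref{super-uniform} with $g(R_1,\dots,R_T)=R(T)$ (which is coordinatewise non-decreasing and strictly positive off $\mathbf{0}$). This gives
\[
\E\!\left[\frac{R_t}{R(T)\vee 1}\,\Big|\,\sigma(\cF^{t-1}\cup\cG^t)\right] \;\le\; \E\!\left[\frac{\alpha_t}{R(T)\vee 1}\,\Big|\,\sigma(\cF^{t-1}\cup\cG^t)\right],
\]
which by tower and linearity yields $\text{FDR}(T)\le \E\big[\sum_{t\in\cH^0}\alpha_t/(R(T)\vee 1)\big]$. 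Here monotonicity~\eqref{eq5} is what allows \lemref{super-uniform} to apply, and the fact that null p-values are independent of the full contextual sequence is what lets the super-uniformity bound pass through the enlarged conditioning.

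The second step is to show the almost-sure inequality $\sum_{t\in\cH^0}\alpha_t \le \alpha\,(R(T)\vee 1)$, by a case split on $H_t$ that matches the two components of the minimum in~\eqref{eq4}. For $t\in\cH^0$ we rewrite the constraint $\psi_t\le \phi_t/\alpha_t + b_t - 1$ as $\alpha_t(\psi_t+1-b_t)\le \phi_t$ to convert each null significance level into a wealth-debit $\phi_t$ (the first summand in the wealth update \eqref{eq2}). For $t\in\cH^1$ we use $\psi_t\le \phi_t+b_t$ together with $R_t\le 1$ to control the wealth-credit $R_t\psi_t$ (the second summand in~\eqref{eq2}). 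These per-step bounds are then combined with the telescoping identity $\sum_{t=1}^T\phi_t - \sum_{t=1}^T R_t\psi_t = w_0 - W(T)\le w_0$ supplied by non-negativity~\eqref{eq3}, and with the specific choice $b_t = \alpha - w_0\mathbf 1\{\rho_1>t-1\}$, to collapse everything into $\alpha\,(R(T)\vee 1)$ on the right.

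I expect the main obstacle to be the careful bookkeeping in this second step, specifically handling the initial wealth $w_0$ in the two regimes $R(T)=0$ and $R(T)\ge 1$. The adaptive choice of $b_t$ is precisely engineered for this: before the first rejection $b_t=\alpha-w_0$, so the "missing" $w_0$ gets absorbed into $\alpha\cdot 1$ when $R(T)=0$; after the first rejection $b_t=\alpha$, and the $\psi$-reward collected at the first rejection compensates the initial $w_0$, so the remaining debits sum to at most $\alpha\,R(T)$. Getting this tradeoff to balance on all sample paths, while also being careful that the case-$\cH^1$ bound really only uses $R_t\le 1$ and not any distributional assumption on non-null p-values, is the delicate part; the passage from the almost-sure bound on $\sum_{t\in\cH^0}\alpha_t$ back to $\text{FDR}(T)\le\alpha$ is then immediate and uniform in $T$.
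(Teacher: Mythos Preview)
Your first step is correct and matches the paper: apply \lemref{super-uniform} with $g(R_1,\dots,R_T)=\sum_i R_i$ to each null summand, obtaining $\fdr(T)\le \E\big[\sum_{t\in\cH^0,\,t\le T}\alpha_t/(R(T)\vee 1)\big]$.

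The second step, however, does not go through. The pathwise inequality $\sum_{t\in\cH^0}\alpha_t\le\alpha\,(R(T)\vee 1)$ is \emph{false} for general monotone CGAI rules. A two-step counterexample: take $\alpha=0.2$, $w_0=0.1$, both hypotheses null. At $t=1$ set $\phi_1=\alpha_1=\psi_1=0.1$ (so $b_1=0.1$ and \eqref{eq4} holds with equality). On the event $\{R_1=1\}$ we have $W(1)=0.1$ and $b_2=0.2$; set $\phi_2=0.1$, $\alpha_2=0.125$, $\psi_2=0$ (again \eqref{eq4} is tight). On $\{R_1=0\}$ set $\alpha_2=0$, so monotonicity \eqref{eq5} holds. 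Now on the positive-probability event $\{R_1=1,R_2=0\}$ we have $R(2)=1$ yet $\alpha_1+\alpha_2=0.225>0.2=\alpha\cdot 1$. So the bookkeeping you outline cannot close: the constraints $\alpha_t(\psi_t+1-b_t)\le\phi_t$ and $R_t\psi_t\le\phi_t+b_t$ together with $\sum\phi_t-\sum R_t\psi_t\le w_0$ simply do not force $\sum_{t\in\cH^0}\alpha_t\le\alpha(R(T)\vee 1)$ pathwise. The deterministic $\widehat{\fdp}$ bound you may have in mind is a special feature of LORD-type rules (where $\phi_t=\alpha_t$), not of the full GAI class.

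What the paper does instead is keep the wealth inside the numerator from the start: it bounds $\E\big[(V(T)+W(T))/(R(T)\vee 1)\big]$, decomposing the numerator per step as $w_0/T + R_t(\psi_t+\1\{t\in\cH^0\})-\phi_t$. The two halves of \eqref{eq4} are then used \emph{inside the conditional expectation together with} the super-uniformity lemma: for $t\in\cH^0$, rewrite the numerator as $w_0/T+R_tb_t+(\phi_t/\alpha_t)(R_t-\alpha_t)$ and kill the last term via \lemref{super-uniform}; for $t\notin\cH^0$, use $\psi_t\le\phi_t+b_t$ and $R_t\le 1$. This yields the common bound $\E[(w_0/T+R_tb_t)/(R(T)\vee 1)]$ for every $t$, which sums to $\alpha$. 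The essential difference from your plan is that the lemma and the reward constraint are applied jointly, term by term in expectation, rather than first passing to $\sum_{t\in\cH^0}\alpha_t$ and then seeking a sample-path bound that does not exist.
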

\begin{proof}
Note that the number of false discoveries is $V(T) = \sum_{t = 1}^{T} R_t \1 \{ t \in \cH^0\}$ and the amount of wealth is $W(T) = w_0 + \sum_{t = 1}^{T} (-\phi_t + R_t \psi_t)$.

We can derive the following expression by using the tower property of conditional expectation 
\begin{align}
	\E \bigg [ \frac{V(T) + W(T)}{R(T) \vee 1} \bigg] & = \sum_{t = 1}^{T} \E \bigg[ \frac{R_t \1\{t \in \cH^0 \}+ \frac{w_0}{T} - \phi_t + R_t \psi_t }{R(T) \vee 1} \bigg]  \nonumber \\
	& = \sum_{t = 1}^{T} \E \bigg[ \frac{\frac{w_0}{T} + R_t  (\psi_t + \1\{t \in \cH^0 \}) - \phi_t }{R(T) \vee 1} \bigg] \nonumber \\
	& = \sum_{t = 1}^{T} \E \bigg[ \E \Big [\frac{\frac{w_0}{T} + R_t  (\psi_t + \1\{t \in \cH^0 \}) - \phi_t }{R(T) \vee 1}  \Big \vert \sigma(\cF^{t-1} \cup \cG^t) \Big] \bigg] \label{eqn:comb}
\end{align}
We split the analysis in two cases based on whether $H_t=0$ or $H_t=1$.
\bitem
\item Case 1: Suppose that $t \in \cH^0$.  By applying \lemref{super-uniform}, we have
\begin{align}
	\E \bigg[ \frac{R_t}{R(T) \vee 1} \bigg \vert \sigma(\cF^{t-1} \cup \cG^t) \bigg] & = \E \bigg[ \frac{\1 \{P_t \le \alpha_t\}}{R(T) \vee 1} \bigg \vert \sigma(\cF^{t-1} \cup \cG^t) \bigg] \nonumber  \\
	&\le \E \bigg[ \frac{\alpha_t}{R(T) \vee 1} \bigg \vert \sigma(\cF^{t-1} \cup \cG^t) \bigg]. \label{eqn:expsup}
\end{align}

Since $\psi_t \le \frac{\phi_t}{\alpha_t} + b_t - 1$, we further obtain

\begin{align*}
	 \E \bigg[ \E \Big [\frac{\frac{w_0}{T} + R_t  (\psi_t + \1\{t \in \cH^0 \}) - \phi_t }{R(T) \vee 1}  \Big \vert \sigma(\cF^{t-1} \cup \cG^t) \Big] \bigg] & \le \E \bigg[ \E \Big [\frac{\frac{w_0}{T} + R_t  (\frac{\phi_t}{\alpha_t} + b_t) - \phi_t }{R(T) \vee 1}  \Big \vert \sigma(\cF^{t-1} \cup \cG^t) \Big] \bigg] \\
	& = \E \bigg[ \E \Big [\frac{\frac{w_0}{T} + R_t b_t + \frac{\phi_t}{\alpha_t}(R_t - \alpha_t)}{R(T) \vee 1}  \Big \vert \sigma(\cF^{t-1} \cup \cG^t) \Big] \bigg] \\
	& \le \E \bigg[ \E \Big [\frac{\frac{w_0}{T} + R_t b_t}{R(T) \vee 1}  \Big \vert \sigma(\cF^{t-1} \cup \cG^t) \Big] \bigg],
\end{align*}
where the last inequality follows by applying~\eqref{eqn:expsup}.

\item Case 2: Suppose that $t \not \in \cH^0$. Using the fact that $\psi_t \le \phi_t + b_t$, we have
\begin{align*}
	 \E \bigg[ \E \Big [\frac{\frac{w_0}{T} + R_t  (\psi_t + \1\{t \in \cH^0 \}) - \phi_t }{R(T) \vee 1}  \Big \vert \sigma(\cF^{t-1} \cup \cG^t) \Big] \bigg]  & \le \E \bigg[ \E \Big [\frac{\frac{w_0}{T} + R_t  (\phi_t + b_t) - \phi_t }{R(T) \vee 1}  \Big \vert \sigma(\cF^{t-1} \cup \cG^t) \Big] \bigg] \\
	& =  \E \bigg[ \E \Big [\frac{\frac{w_0}{T} + R_t  b_t + (R_t - 1) \phi_t }{R(T) \vee 1}  \Big \vert \sigma(\cF^{t-1} \cup \cG^t) \Big] \bigg]  \\
	& \le \E \bigg[ \E \Big [\frac{\frac{w_0}{T} + R_t b_t}{R(T) \vee 1}  \Big \vert \sigma(\cF^{t-1} \cup \cG^t) \Big] \bigg].
\end{align*}

\eitem
Combining the bound on $ \E \bigg[ \E \Big [\frac{\frac{w_0}{T} + R_t  (\psi_t + \1\{t \in \cH^0 \}) - \phi_t }{R(T) \vee 1}  \Big \vert \sigma(\cF^{t-1} \cup \cG^t) \Big] \bigg]$ from both cases in~\eqref{eqn:comb} and using the definition of $b_t$, we obtain that,
\begin{align*}
	\E \bigg [ \frac{V(T) + W(T)}{R(T) \vee 1} \bigg] & \le \sum_{t = 1}^{T} \E \bigg[ \frac{\frac{w_0}{T} + R_t  b_t}{R(T) \vee 1} \bigg]  = \E \bigg[\frac{w_0 + \sum_{t= 1}^{T} R_t b_t}{R(T) \vee 1} \bigg] \\
	& \le \E \bigg[\frac{w_0 + \sum_{t= 1}^{T} R_t \alpha - w_0 \1 \{T \ge \rho_1\}}{R(T) \vee 1}\bigg]  = \E \bigg[\frac{w_0 +  \alpha R(T) - w_0 \1 \{T \ge \rho_1\}}{R(T) \vee 1}\bigg]  \le \alpha. 
\end{align*}
This concludes the proof of the theorem.
\end{proof}

\begin{thm} [\thmref{mfdr-control} Restated]
Consider a sequence of $((P_t,X_t))_{t \in \mathbb{N}}$  of p-values and contextual features. If the p-values $P_t$'s are conditionally super-uniform distributed (as in~\eqref{eqn:condsup}) , then for any contextual generalized alpha-investing rule $($i.e., satisfying conditions~\eqref{eq1},~\eqref{eq2},~\eqref{eq3}, and~\eqref{eq4}$)$, we have online mFDR control,
\beq
\sup_{T \in \mathbb{N}} \; \text{\em mFDR}(T) \le \alpha.
\eeq
\end{thm}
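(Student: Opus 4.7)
The plan is to adapt the FDR-control argument of Theorem~\ref{thm:fdr-control}, but with the key simplification that, because mFDR is the ratio of two (unconditional) expectations rather than the expectation of a ratio, there is no denominator $R(T) \vee 1$ sitting inside the expectation. This removes the need for the leave-one-out/super-uniformity trick of Lemma~\ref{lem:super-uniform}, which is precisely the step that required monotonicity. Instead, only the conditional super-uniformity~\eqref{eqn:condsup} of each null p-value will be used, so the result will hold for \emph{any} contextual GAI rule satisfying \eqref{eq1}--\eqref{eq4}, monotone or not.

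First, I would write down the identity
\[
V(T) + W(T) \;=\; w_0 \;+\; \sum_{t=1}^{T}\Bigl(R_t\,\mathbf{1}\{t\in\mathcal{H}^0\} \;-\; \phi_t \;+\; R_t\,\psi_t\Bigr),
\]
take expectations, and analyze each summand by conditioning on $\sigma(\mathcal{F}^{t-1}\cup\mathcal{G}^t)$. I would split into two cases, exactly as in the FDR proof. If $t\in\mathcal{H}^0$, I use the bound $\psi_t \le \phi_t/\alpha_t + b_t - 1$ from~\eqref{eq4} and apply the conditional super-uniformity~\eqref{eqn:condsup} to conclude
\[
\mathbb{E}\!\left[R_t\,\mathbf{1}\{t\in\mathcal{H}^0\} - \phi_t + R_t\psi_t \,\big|\, \sigma(\mathcal{F}^{t-1}\cup\mathcal{G}^t)\right] \;\le\; b_t\,\mathbb{E}[R_t \mid \sigma(\mathcal{F}^{t-1}\cup\mathcal{G}^t)],
\]
since $R_t=\mathbf{1}\{P_t\le\alpha_t\}$ has conditional mean at most $\alpha_t$. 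If $t\notin\mathcal{H}^0$, I instead use $\psi_t \le \phi_t + b_t$ together with $R_t\le 1$, which yields the same upper bound $b_t\,\mathbb{E}[R_t\mid\cdot]$. Note that neither case invokes the monotonicity condition~\eqref{eq5}, which is the crucial point.

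Summing over $t$ and using the tower property gives
\[
\mathbb{E}[V(T) + W(T)] \;\le\; w_0 \;+\; \mathbb{E}\!\left[\sum_{t=1}^{T} b_t R_t\right].
\]
I would then evaluate $\sum_t b_t R_t$ using the explicit form $b_t = \alpha - w_0\mathbf{1}\{\rho_1>t-1\}$: if no rejection has occurred by time $T$ the sum is $0$, and otherwise the first rejection contributes $\alpha - w_0$ while each subsequent rejection contributes $\alpha$, yielding $\sum_t b_t R_t = \alpha R(T) - w_0\,\mathbf{1}\{R(T)\ge 1\}$. Combining this with the non-negativity of $W(T)$ (which follows from \eqref{eq3}) produces
\[
\mathbb{E}[V(T)] \;\le\; w_0\,\Pr[R(T)=0] \;+\; \alpha\,\mathbb{E}[R(T)] \;\le\; w_0 + \alpha\,\mathbb{E}[R(T)].
\]
Finally, since $w_0<\alpha$ by~\eqref{eq1}, the right-hand side is strictly less than $\alpha(\mathbb{E}[R(T)]+1)$, which gives $\mathrm{mFDR}(T)\le\alpha$ for every $T$ and hence the claim.

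The only mildly delicate step is the bookkeeping in the final telescoping of $\sum_t b_t R_t$, where the $w_0\mathbf{1}\{\rho_1>t-1\}$ term must be handled so that the $w_0$ ``debt'' is returned at the first discovery; this is exactly what lets $w_0 \Pr[R(T)=0]$ be absorbed into the $\alpha$ budget. I do not anticipate a genuine obstacle: the removal of the $R(T)\vee 1$ denominator is precisely what makes monotonicity unnecessary, and every other ingredient is already contained in the GAI budget constraints~\eqref{eq3}--\eqref{eq4}.
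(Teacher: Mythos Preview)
Your proposal is correct and follows essentially the same argument as the paper: both expand $V(T)+W(T)$, condition on $\sigma(\mathcal{F}^{t-1}\cup\mathcal{G}^t)$, split into the two cases using the two branches of~\eqref{eq4}, and arrive at $\mathbb{E}[V(T)]\le w_0+\mathbb{E}[\sum_t b_t R_t]$ before cashing in the $b_t$'s. The only cosmetic difference is in the final line: the paper observes pointwise that $w_0\mathbf{1}\{R(T)=0\}+\alpha R(T)\le \alpha\,(R(T)\vee 1)$ and so bounds $\mathrm{mFDR}(T)=\mathbb{E}[V(T)]/\mathbb{E}[R(T)\vee 1]$, whereas you bound $\mathbb{E}[V(T)]/(\mathbb{E}[R(T)]+1)$; since $\mathbb{E}[R(T)\vee 1]=\mathbb{E}[R(T)]+\Pr[R(T)=0]$, your intermediate bound $\mathbb{E}[V(T)]\le w_0\Pr[R(T)=0]+\alpha\,\mathbb{E}[R(T)]$ in fact yields the paper's slightly stronger conclusion as well.
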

\begin{proof}
The conditional super-uniformity implies that under null
\begin{align*}
\E \big[ R_t \big \vert \sigma(\cF^{t-1} \cup \cG^t) \big] \le \alpha_t.
\end{align*}
Now using a proof technique similar to \thmref{proof-fdr}, for any $T \in \mathbb{N}$, we get
\begin{align*}
\E[V(T)] & \le \E[V(T) + W(T)] \\
& = \sum_{t = 1}^{T} \E \bigg[ R_t \1\{t \in \cH^0 \}+ \frac{w_0}{T} - \phi_t + R_t \psi_t \bigg]  \\
& = \sum_{t = 1}^{T} \E \bigg[ \frac{w_0}{T} + R_t  (\psi_t + \1\{t \in \cH^0 \}) - \phi_t \bigg] \\
& = \sum_{t = 1}^{T} \E \bigg[ \E \Big [\frac{w_0}{T} + R_t  (\psi_t + \1\{t \in \cH^0 \}) - \phi_t \Big \vert \sigma(\cF^{t-1} \cup \cG^t) \Big] \bigg] \\
& \le \sum_{t = 1}^{T}  \E \bigg[ \E \Big [\frac{w_0}{T} + R_t b_t \Big \vert \sigma(\cF^{t-1} \cup \cG^t) \Big] \bigg] \\
& = \E \big[w_0 + \sum_{t= 1}^{T} R_t b_t \big] = \E \big[w_0 +  \alpha R(T) - w_0 \1 \{T \ge \rho_1\}\big] \\
& \le \alpha \E \big[R(T) \vee 1 \big],
\end{align*}
where for the second inequality we used an analysis similar to that used in the first case in the proof of~\thmref{proof-fdr}. Therefore, for any $T \in \mathbb{N}$,
\beq
\text{mFDR(T)} = \frac{\E[V(T)]}{\E [R(T) \vee 1 ]} \le \alpha.
\eeq
This concludes the proof of the theorem.
\end{proof}

\section{Missing Details from Section~\ref{sec:power}} \label{app:power}
\begin{prp}[Proposition~\ref{prp:FDR-general-weight} Restated]
Suppose that the weight distribution satisfies the informative-weighting assumption in~\eqref{weight-condition}. Suppose that p-values $P_t$'s are independent, and are conditionally independent of the weights $\omega_t$'s given $H_t$'s. Then the weighted LORD++ rule can control the online FDR at any given level $\alpha$, i.e.,
\beq
\sup_{T \in \mathbb{N}}\; \text{\em FDR}(T) \le \alpha.
\eeq
\end{prp}
\begin{proof}
We start with a frequently used estimator of FDR that is defined as:
\beq
\widehat{\fdp}(T) := \frac{\sum_{t=1}^{T} \alpha_t}{R(T) \vee 1}.
\eeq
As established in Section 4 in \citet{ramdas2017online}, LORD++ applied to any sequence of p-values will ensure that $\sup_{T} \widehat{\fdp}(T) \le \alpha$. We apply LORD++ with the sequence of p-values defined as $\boldsymbol{P'} = (\frac{P_1}{\omega_1}, \frac{P_2}{\omega_2}, \frac{P_3}{\omega_3} \dots)$. Let $P_t' = P_t/\omega_t$ for any $t \in \mathbb{N}$. Then it follows that,
\begin{align} \label{fdr-hat}
\sup_{T \in \mathbb{N}} \widehat{\fdp}(T) = \sup_{T \in \mathbb{N}} \frac{\sum_{t=1}^{T} \alpha_t}{R(T) \vee 1} = \sup_{T \in \mathbb{N}} \frac{\sum_{t=1}^{T} \alpha_t}{(\sum_{t=1}^{T} \1\{P_t' \le \alpha_t \} ) \vee 1}\le \alpha.
\end{align}

 We denote the sigma-field of decisions based on the weighted p-values $\boldsymbol{P'}$ till time $t$ as $\cC^{t} = \sigma (R_1, \dots, R_t)$. By using the ``leave-one-out" method used in~\thmref{proof-fdr}, the FDR of the weighted LORD++ at any time $T$ can be written as,
\begin{align*}
\fdr(T) & = \E \bigg[ \frac{\sum_{t=1}^{T} \1 \{t \in \cH^0: P_t' \le \alpha_t \}}{ (\sum_{t=1}^{T} \1\{P_t' \le \alpha_t \} ) \vee 1}\bigg] \\\
& = \sum_{t=1}^{T}\E \bigg[ \frac{\1 \{t \in \cH^0: \frac{P_t}{\omega_t} \le \alpha_t \}}{R(T) \vee 1}\bigg] \\
& = \sum_{t=1}^{T} \E \bigg[ \E \Big[ \frac{\1 \{t \in \cH^0: \frac{P_t}{\omega_t} \le \alpha_t \}}{R(T) \vee 1} \Big \vert \cC^{t-1} \Big]\bigg] \\
& = \sum_{t=1}^{T} \E \bigg[ \E \Big[ \frac{\1 \{t \in \cH^0: \frac{P_t}{\omega_t} \le \alpha_t \}}{R^{-t}(T) \vee 1} \Big \vert \cC^{t-1} \Big]\bigg],
\end{align*}
where $R^{-t}(T) =\sum_{i=1}^{T} \1\{P_i/\omega_i \le \alpha_i\}$ is obtained by setting $P_t = 0$, while keeping all $\omega_t$'s unchanged. 
The last equality holds due to the fact that $R^{-t}(T) = R(T)$ given the event $ \{P_t /\omega \le \alpha_t \}$. 

Since $\alpha_t \in \cC^{t-1}$, and $P_t, \omega_t$ are independent of $R^{-t}(T)$ and $\cC^{t-1}$, we can take the expectation of the numerator inside the brackets and obtain that 
\begin{align*}
\Pr[P_t/\omega_t \le \alpha_t \mid \cC^{t-1}, H_t = 0] & = \int \Pr [P_t / \omega_t \le \alpha_t \mid \cC^{t-1}, \omega_t = w, H_t = 0] \, \mathrm{d} Q(w \mid  H_t = 0) \\
& = \int w \alpha_t d Q(w\mid H_t = 0) \\
& = u_0 \alpha_t,
\end{align*}
where $u_0 = \E[\omega \mid H_t = 0]$. Plugging this in the bound on $\fdr(T)$ from above gives,
\begin{align}
\fdr(T) & = \sum_{t=1}^{T} \E \bigg[ \E \Big[\frac{u_0 \alpha_t}{R^{-t}(T) \vee 1} \Big \vert \cC^{t-1} \Big]\bigg] \nonumber \\
& \le \sum_{t=1}^{T} \E \bigg[ \E \Big[\frac{\alpha_t}{R^{-t}(T) \vee 1} \Big \vert \cC^{t-1} \Big]\bigg] \label{mean_of_weight} \\
& \le \sum_{t=1}^{T} \E \bigg[ \E \Big[\frac{\alpha_t}{R(T) \vee 1} \Big \vert \cC^{t-1} \Big]\bigg] \label{leave-one}\\
& = \E \bigg[ \frac{\sum_{t=1}^{T} \alpha_t}{R(T) \vee 1}\bigg] = \E \big[  \widehat{\fdp}(T) \big] \le \alpha, \nonumber
\end{align}
where inequality \eqref{mean_of_weight} is due to the assumption that $u_0 < 1$,~\eqref{leave-one} follows by the fact that $R^{-t}(T) \ge R(T)$ due to monotonicity of LORD++, and the last equality is based on~\eqref{fdr-hat}.
\end{proof}

\begin{thm} [Theorem~\ref{thm:lowerbound} Restated]
Let $D(a) = \Pr[P/\omega \le a]$ be the above marginal distribution of weighted p-values. Then, the average power of weighted LORD++ rule is almost surely bounded as follows:
\beq 
	\liminf_{T \to \infty} \text{\em TDR}(T) \ge (\sum_{m =1}^{\infty} \prod_{j = 1}^{m}(1-D(b_0 \gamma_j)))^{-1}.
\eeq
\end{thm}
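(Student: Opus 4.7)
The plan is to reduce the analysis to the LORD++ power analysis carried out by Javanmard and Montanari, replacing the marginal p-value CDF $G$ with the marginal weighted p-value CDF $D$ throughout. The first step is an i.i.d.\ reduction: under the mixture model together with the weighting scheme, the triples $(H_t, P_t, \omega_t)$ are i.i.d.\ across $t$ (since $H_t \stackrel{\text{i.i.d.}}{\sim}\text{Bernoulli}(\pi_1)$, and $P_t$ and $\omega_t$ are conditionally independent given $H_t$ with fixed conditional distributions). Hence the weighted p-values $P_t' := P_t/\omega_t$ form an i.i.d.\ sequence whose marginal CDF is exactly $D(a) = (1-\pi_1)\mu_0 a + \pi_1 \int F(aw)\,\mathrm{d}Q_1(w)$ from equation~\eqref{eqn:da}. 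Running weighted LORD++ on $(P_t,\omega_t)$ is therefore identical to running plain LORD++ on the i.i.d.\ stream $(P_t')$ with marginal CDF $D$, which is precisely the setting of Javanmard--Montanari's power theorem.

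Next, I would analyze the renewal structure of rejections. Let $\tau_k$ denote the number of steps between the $(k-1)$-th and the $k$-th discovery. Because the LORD++ significance level at time $t$ has the form $\alpha_t = \gamma_{t-\tau_t} b_t$, and because $b_t = \alpha - w_0\mathbf{1}\{\rho_1>t-1\} \ge b_0$ with equality only before the first rejection, the per-step rejection probability conditional on not having rejected for $j-1$ steps is at least $D(b_0 \gamma_j)$. Using the i.i.d.\ property of $(P_t')$ and the fact that the next-rejection dynamics only depend on the elapsed time since the previous rejection, the gaps $(\tau_k)_{k \ge 2}$ are i.i.d.\ and one obtains the tail bound $\Pr[\tau_k > m] \le \prod_{j=1}^m (1 - D(b_0 \gamma_j))$, so that
$$\E[\tau_k] \;\le\; \sum_{m=0}^\infty \prod_{j=1}^m \bigl(1 - D(b_0\gamma_j)\bigr).$$
By the strong law of large numbers for renewal processes applied to the partial sums $T_k = \tau_1 + \cdots + \tau_k$, this gives $\liminf_{T\to\infty} R(T)/T \ge 1/\E[\tau_k]$ almost surely.

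Finally, to pass from $R(T)/T$ to $\mathrm{TDR}(T) = S(T)/(N_1(T)\vee 1)$, I would mirror the Javanmard--Montanari computation verbatim, using $N_1(T)/T \to \pi_1$ almost surely by the SLLN on the $H_t$'s and decomposing $R(T) = S(T) + V(T)$, where the false-discovery term $V(T)$ is controlled by Proposition~\ref{prp:FDR-general-weight}; the resulting expression simplifies so that the $\pi_1$ and $F$ dependence collapses neatly into $D$. The main obstacle I anticipate is not the probabilistic machinery (which is just renewal theory) but the bookkeeping in this last step: verifying that the Javanmard--Montanari TDR formula indeed transports from $G$ to $D$ with no extra $\pi_1$ factors surviving, since the natural renewal computation outputs $1/\E[\tau_k]$ for $R(T)/T$ rather than for the power itself. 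The key observation that makes this work is that $D$ already encodes the mixture structure $(1-\pi_1)Q_0 + \pi_1 Q_1$, so the same absorption that yields the stated unweighted bound in~\eqref{lbLORD} applies with $G$ replaced by $D$.
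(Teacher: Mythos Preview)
Your first two steps---the i.i.d.\ reduction of $P_t' = P_t/\omega_t$ to a stream with marginal CDF $D$, and the renewal analysis of the inter-rejection gaps (after lower-bounding LORD++ by LORD via $b_t \ge b_0$)---match the paper exactly. The gap is in your last step. You propose to go from $R(T)/T$ to $S(T)/N_1(T)$ by writing $S(T) = R(T) - V(T)$ and invoking Proposition~\ref{prp:FDR-general-weight} to control $V(T)$. But that proposition only gives $\E\bigl[V(T)/(R(T)\vee 1)\bigr] \le \alpha$: an expectation of a ratio, not an almost-sure bound, and not a bound on $V(T)/T$. Even if one had $V(T) \le \alpha\, R(T)$ pathwise, the resulting inequality would be $S(T)/N_1(T) \ge (1-\alpha)R(T)/N_1(T) \to (1-\alpha)/(\pi_1\,\E[\tau_k])$, which matches the claimed $1/\E[\tau_k]$ only under the extraneous assumption $\pi_1 \le 1-\alpha$. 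The $\pi_1$ factor does \emph{not} ``collapse neatly'' along this route, and FDR control is the wrong tool here.

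The paper (following Javanmard--Montanari) never passes through $R(T)/T$. It runs a renewal--\emph{reward} argument instead: attach to the $i$th rejection the reward $r_i := \mathbf{1}\{\rho_i \in \mathcal{H}^1\}$, so that $\sum_{i \le R(T)} r_i = S(T)$ directly. Because the triples $(H_t, P_t, \omega_t)$ are i.i.d.\ and LORD restarts at each rejection, the pairs $(\Delta_i, r_i)$ are i.i.d.\ for $i \ge 2$, and the renewal--reward SLLN yields $S(T)/T \to \E[r_1]/\E[\Delta_1]$ almost surely. Dividing by $N_1(T)/T \to \pi_1$ (SLLN on the $H_t$'s), the paper identifies $\E[r_i]$ with $\pi_1$ so the $\pi_1$ cancels, and Fatou's lemma passes from $\mathrm{TDP}$ to $\mathrm{TDR}$. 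No FDR-type inequality is invoked anywhere.
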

\begin{proof}
Since we are interested in lower bounds, we consider a version of LORD (as also considered in~\citep{javanmard2018online}) which that is based on the following rule,
\begin{align*}
\textbf{LORD:} W(0) = w_0 = b_0 =\alpha/2,  \quad \phi_t = \alpha_t = b_0 \gamma_{t - \tau_t},  \quad \psi_t = b_0.
\end{align*} 
Note that since $b_t = \alpha - w_0 \1 \{\rho_1 > t-1\} >  b_0$ in LORD++, the test level in LORD++ is at least as large to the test level in LORD. Therefore, for any p-value sequence the power of the LORD from is also a lower bound on the power of LORD++. In the rest of this proof, we focus on LORD for the weighted p-value sequence $\{P_1/\omega_1,P_2/\omega_2,\dots\}$. The bound established below is in fact {\em tight} for LORD under this p-value sequence.

Denote by $\rho_i$ as the time of the $i$th discovery (rejection), with $\rho_0 = 0$, and $\Delta_i = \rho_i - \rho_{i-1}$ as the $i$th time interval between the $(i-1)$st and $i$th discoveries.
Let $r_i := \1 \{\rho_i \in \cH^1 \}$ be the reward associated with inter-discovery $\Delta_i$. 
Since the weighted p-values are i.i.d.\ it can be seen that the times between successive discoveries are i.i.d.\ according to the testing procedure LORD, and the process $R(T) = \sum_{l = 1}^{T} R_l$ is a {\em renewal process}~\citep{cox1967renewal}. In fact, for each $i$, we have
\begin{align*}
	\Pr [\Delta_i \ge m] & = \Pr [\cap_{l = \rho_{i-1}}^{\rho_{i-1} + m} \{P_l /  \omega_l > \alpha_l \} ] = \prod_{l = \rho_{i-1}}^{\rho_{i-1} + m} (1 - D(\alpha_l)) = \prod_{l = \rho_{i-1}}^{\rho_{i-1} + m} (1 - D(b_0 \gamma_{l - \rho_{i-1}})) = \prod_{l = 1}^{m} ( 1- D(b_0 \gamma_l)).
\end{align*}
The above expression is same for every $i$. Therefore, 
\beq
\E [\Delta_i] = \sum_{m = 1}^{\infty} \Pr [\Delta_{i} \ge m] = \sum_{m = 1}^{\infty} \prod_{l = 1}^{m} (1 - D(b_0 \gamma_l)).
\eeq

Applying the strong law of large numbers for renewal-reward processes~\citep{cox1967renewal}, we obtain that the following statement holds almost surely,
\beq
\lim_{T \to \infty} \frac{1}{T} \sum_{i = 1}^{R(T)} r_i = \frac{\E(r_i)}{\E(\Delta_1)} = \pi_1 (\sum_{m = 1}^{\infty} \prod_{l = 1}^{m} (1- D(b_0 \gamma_l))) ^{-1}.
\eeq
Let $|\cH^1(T)|$ be the number of true alternatives till time $T$. Since $\lim_{T \to \infty} |\cH^1(T)|/T = \pi_1$ almost surely, we have
\beq
\lim_{T \to \infty} \frac{1}{|\cH^1(T)|} \sum_{i \in \cH^1(T)} R_i = \lim_{T \to \infty} \frac{1}{|\cH^1(T)|} \sum_{i = 1}^{R(T)} r_i = (\sum_{m = 1}^{\infty} \prod_{l = 1}^{m} (1- D(b_0 \gamma_l))) ^{-1}.
\eeq
Now by using the definition of $\text{TDP}(T)$, almost surely, we have that for any weighted LORD++,
$$\liminf_{T \to \infty} \tdp (T) \ge (\sum_{m =1}^{\infty} \prod_{j = 1}^{m}(1-D(b_0 \gamma_j)))^{-1}.$$
As discussed above, this bound translates into a lower bound for weighted LORD++. 
Furthermore, by using the Fatou's lemma~\citep{carothers2000real}, we can extend the same result for $\text{TDR}(T)$ almost surely,
$$\liminf_{T \to \infty} \tdr (T) = \liminf_{T \to \infty} \E [\tdp (T)] \ge \E [\liminf_{T \to \infty} \tdp (T)] \ge (\sum_{m =1}^{\infty} \prod_{j = 1}^{m}(1-D(b_0 \gamma_j)))^{-1}.$$
\end{proof}

\begin{thm}[Theorem~\ref{thm:comparison} Restated]
Suppose that the parameters in LORD~\eqref{simpleLORD} satisfy $b_0 \gamma_1 < a_0$, and the weight distribution satisfies $\Pr[\omega < a_0/(b_0 \gamma_1 ) \mid H_t = 1] = 1$ for every $t \in \mathbb{N}$ and the informative-weighting assumption in~\eqref{weight-condition}. Then, the average power of weighted LORD is greater than equal to that of LORD almost surely.
\end{thm}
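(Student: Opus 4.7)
The plan is to reduce the comparison of limit powers to a pointwise comparison of the marginal CDFs of weighted vs.\ unweighted p-values, and then prove this pointwise bound using the support condition on $\omega$ together with informative weighting.

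First, I would invoke the formulas for average power: for plain LORD this is exactly $\bigl(\sum_{m\ge 1}\prod_{j=1}^m (1-G(b_0\gamma_j))\bigr)^{-1}$ by~\eqref{lbLORD}; for weighted LORD the same renewal-reward argument as in Theorem~\ref{thm:lowerbound} (applied to the i.i.d.\ weighted p-value sequence, which is marginally distributed with CDF $D$, and with $b_t\equiv b_0$) gives almost-sure power $\bigl(\sum_{m\ge 1}\prod_{j=1}^m (1-D(b_0\gamma_j))\bigr)^{-1}$. Consequently it suffices to prove $D(b_0\gamma_j)\ge G(b_0\gamma_j)$ for every $j\ge 1$: this makes $1-D(b_0\gamma_j)\le 1-G(b_0\gamma_j)$ termwise, shrinks the infinite sum, and enlarges its reciprocal.

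Second, I would rewrite $D(a)-G(a)$ for $a=b_0\gamma_j\le b_0\gamma_1<a_0$. Using the closed form for $D(a)$ from~\eqref{eqn:da}, the informative-weighting identity $(1-\pi_1)(1-u_0)=\pi_1(u_1-1)$, and $a(u_1-1)=a\int(w-1)\,dQ_1(w)$, I would get
\[
D(a)-G(a)=\pi_1\int\bigl[F(aw)-F(a)-a(w-1)\bigr]dQ_1(w)=\pi_1\int\!\!\int_a^{aw}(f(u)-1)\,du\,dQ_1(w).
\]
Third, I would exploit the support hypothesis $\Pr[\omega<a_0/(b_0\gamma_1)\mid H_t=1]=1$: since $a\le b_0\gamma_1$, we have $a\omega\le a_0$ almost surely under the alternative, so the inner integration lies inside $[0,a_0]$, where $f>1$ by the definition of $a_0$. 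This gives a clean sign structure for the integrand: nonnegative when $w\ge 1$, nonpositive when $w\le 1$, vanishing at $w=1$.

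Finally, I would close the inequality by a Jensen/tangent-line argument: on $[0,a_0]$ we have $F'(u)=f(u)>1$, and writing $F(aw)-F(a)=\int_a^{aw}f(u)\,du$ I would bound
\[
E_{Q_1}\!\bigl[F(a\omega)\bigr]-F(a)\;\ge\;a\,f(a)\,(u_1-1)\;\ge\;a(u_1-1),
\]
where the first inequality is the tangent-line/convexity bound of $F$ at $a$ restricted to $[0,a_0]$ and the second uses $f(a)>1$; this forces $D(a)-G(a)\ge 0$. Combined with Steps 1--2 this yields the almost-sure power comparison.

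The main obstacle is the last step: the integrand $F(aw)-F(a)-a(w-1)$ is sign-indefinite across $w=1$, so the scalar bound $u_1>1$ alone controls only the first moment of $\omega-1$ and does not, by itself, dominate the weighted integral of $f(u)-1$. Making the Jensen-style bound above rigorous requires invoking the right shape property of $F$ on $[0,a_0]$ (convexity, or a local rearrangement comparison obtained via Fubini after splitting $\omega<1$ against $\omega\ge 1$ and matching mass using $u_1>1$). The condition $b_0\gamma_1<a_0$ is essential precisely because it confines the integration window to the region where $f>1$, which is what allows the positive-$w$ contribution to absorb the negative-$w$ contribution.
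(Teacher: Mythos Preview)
Your reduction to the pointwise inequality $D(a)\ge G(a)$ for each $a=b_0\gamma_j$, and your expression
\[
D(a)-G(a)=\pi_1\int\bigl[F(aw)-F(a)-a(w-1)\bigr]\,dQ_1(w),
\]
are exactly the paper's strategy. At the critical step the paper applies the Mean Value Theorem termwise, writing $(F(aw)-F(a))/(a(w-1))=f(\xi_w)$ for some $\xi_w$ between $a$ and $aw$; since the support condition forces $\xi_w<a_0$ a.s.\ under $Q_1$, it records $f(\xi_w)>1$ and then passes from $\int f(\xi_w)(w-1)\,dQ_1(w)$ directly to $\int(w-1)\,dQ_1(w)=u_1-1$.

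Your self-identified obstacle is a genuine gap, and the Jensen/tangent-line route you propose does not close it: the bound $F(aw)\ge F(a)+af(a)(w-1)$ requires $F$ to be convex on $[0,a_0]$, which is not implied by $f>1$ there (alternative p-value densities are typically decreasing near zero, making $F$ concave). Moreover, the paper's MVT step runs into the very sign issue you flag: from $f(\xi_w)>1$ one gets $f(\xi_w)(w-1)\ge(w-1)$ only when $w\ge 1$; the inequality \emph{reverses} for $w<1$, so $\int(f(\xi_w)-1)(w-1)\,dQ_1(w)\ge 0$ does not follow from $f(\xi_w)>1$ alone. You have therefore correctly located the crux, and neither your convexity argument nor the paper's one-line MVT passage goes through under only the stated hypotheses; an additional structural assumption (for instance $Q_1$ supported on $[1,\infty)$, or $F$ convex on $[0,a_0]$) would be needed to make either argument rigorous.
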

\begin{proof}
We compare the average power bound of weighted LORD and LORD. It is equivalent to comparing $D(a)$ and $G(a)$ for $a = b_0 \gamma_l$, for $l = 1, \dots, \infty$. Since $u = (1-\pi_1) u_0 + \pi_1 u_1 = 1$, we have $(1-\pi_1) u_0 = 1- \pi_1 u_1$. This means that
\begin{align*}
	D(a) - G(a) & = (1 - \pi_1) u_0 a + \pi_1 \int F(aw)\, \mathrm{d} Q_1(w) - (1 - \pi_1)a - \pi_1 F(a) \\
	& =  (1 - \pi_1)(u_0 -1) a + \pi_1 (\int F(aw)\, \mathrm{d} Q_1(w) - F(a)) \\
	& = \pi_1 (1-u_1) a + \pi_1 (\int F(aw)\, \mathrm{d} Q_1(w) - F(a)).
\end{align*}
So we just need to compare $(\mu_1 - 1) a$ and $\int F(aw)\, \mathrm{d} Q_1(w) - F(a)$, for any $a =  b_0 \gamma_l$, for $l = 1, \dots, \infty$.
Due to the fact that $\{\gamma_l\}$ is a non-increasing sequence, we have $a = b_0 \gamma_l \le b_0 \gamma_1$.
Since $b_0 \gamma_1 < a_0$ and $\Pr[\omega < a_0/(b_0 \gamma_1 ) \mid H = 1] = 1$ by assumption, then $\Pr [\max(a, aw) <a_0 \mid H = 1] = 1$.

For any fixed $a = b_0 \gamma_l > 0$, we have 
\begin{align}
	\frac{\int F(aw)\, \mathrm{d} Q_1(w) - F(a)}{a} & = \int \frac{F(aw) - F(a)}{a}\, \mathrm{d} Q_1(w) \nonumber \\
	& = \int \frac{F(aw) - F(a)}{(w-1)a} (w-1)\, \mathrm{d} Q_1(w)  \nonumber\\
	& = \int f(\xi) (w-1)\, \mathrm{d} Q_1(w) \label{intermediate} \\ 
	& \ge \int (w-1)\, \mathrm{d} Q_1(w) \label{density} \\
	& = \E[W \mid H = 1] - 1 = u_1 -1,\nonumber
\end{align}
for some $\xi \in (\min(a, aw), \max(a, aw))$. Note we assume $Q_1$ is a continuous distribution, so $\Pr[w = 1 \mid H=1]=0$.
The equality \eqref{intermediate} is achieved by applying the Intermediate Value Theorem, and the inequality \eqref{density} is obtained by the fact that $\Pr[\xi < a_0 \mid H = 1] = 1$, i.e., $\Pr[f(\xi) > 1 \mid H = 1] = 1$.

Therefore, we prove that $\int F(aw)\, \mathrm{d} Q_1(w) - F(a) \ge u_1 - 1$, which implies that $D(a) \ge G(a)$ for $a = b_0 \gamma_l$, for $l = 1, \dots, \infty$.
\end{proof}

\section{SAFFRON Procedure}\label{app:saffron}
Let us start with a quick introduction to the SAFFRON procedure proposed by~\cite{ramdas2018saffron}. Since SAFFRON can be viewed as an online analogue of the famous offline Storey-BH adaptive procedure~\citep{storey2002direct}, we start a description of the Storey-BH procedure.

In the offline setting where p-values are all available, the rejection rule is to reject all p-values below some threshold $s$, meaning that $\cR(s) =\{i \mid P_i \le s\}$. Thus an oracle estimate for FDP is given by
\beq
\fdp^*(s):=\frac{|\cH^0|\cdot s}{|\cR(s)| \vee 1}.
\eeq
The world oracle means that $\fdp^*$ cannot be calculated, since $\cH^0$ is unknown. The BH method overestimates $\fdp^*(s)$ by the empirically computable quantity
\beq
\widehat{\fdp}_{\text{BH}}(s):=\frac{n\cdot s}{|\cR(s)|\vee 1},
\eeq
and chooses the threshold $\hat{s}_{\text{BH}}(s) = \max \{s: \widehat{\fdp}_{\text{BH}}(s) \le \alpha \}$. 
However, \cite{storey2002direct} noted that the estimate of $\widehat{\fdp}_{\text{BH}}(s)$ is conservative, and thus proposed a different estimate (referred to as Storey-BH) as
\beq
\widehat{\fdp}_{\text{St-BH}}(s):=\frac{n\cdot s \cdot \hat{\pi}_0}{|\cR(s)|\vee 1},
\eeq
where the fraction of nulls $\hat{\pi}_0$ is estimated by 
\beq
\hat{\pi}_0:= \frac{1 + \sum_{i = 1}^{n} \1 \{P_i > \lambda \} }{n(1-\lambda)},
\eeq
with a well-chosen $\lambda$. There is a bias-variance trade-off in
the choice of $\lambda$. When $\lambda$ grows larger, the bias of $\hat{\pi}_0$ grows smaller while the variance becomes larger.
Through numerical simulations~\citet{storey2002direct} demonstrated that there could be an increase in power (over the BH procedure) with this adaptivity.

Similarly, in the online setting, the oracle FDP estimate now is 
\beq
\fdp^*(T) := \frac{\sum_{t \in [T], t \in \cH^0} \alpha_t}{R(T) \vee 1}
\eeq
The connection between SAFFRON and LORD/LORD++ is the same as that between Storey-BH and BH. Empirically, LORD/LORD++ overestimates the oracle $\fdp^*(T)$ as 
\beq
\widehat{\fdp}_{\text{LORD}}(T) := \frac{\sum_{t \in [T]} \alpha_t}{R(T) \vee 1}
\eeq
SAFFRON estimates the amount of alpha-wealth that was spent testing nulls so far, which is analogous to the proportion of nulls in the offline setting, and controls the following overestimate of oracle FDP,
\beq
\widehat{\fdp}_{\text{SAFFRON}}(T) := \frac{\sum_{t \in [T]} \alpha_t  \frac{\1 \{P_t > \lambda_t \} }{(1-\lambda_t)} }{R(T) \vee 1},
\eeq
where $\{\lambda_t\}_{t = 1}^{\infty}$ is predictable sequence of user-chosen parameters in interval $(0,1)$. Note that when $\lambda_t = 0$, it recovers $\widehat{\fdr}_{\text{LORD}}(T)$. \cite{ramdas2018saffron} proved that, under some constraints, SAFFRON can control online FDR at given level $\alpha$ by showing that 
$$\fdr(T) \le \E [\widehat{\fdp}_{\text{SAFFRON}}(T)].$$

However, when we extend SAFFRON to incorporate contextual features, due to the optimization problem that we solve in Algorithm~\ref{alg:NeuralOnlineFDR}, it seems difficult to ensure FDR control in practice. The possible reason here is analogous to overfitting in machine learning, due to the bias-variance tradeoff inherent in the SAFFRON estimator. When $\lambda_t$'s is set large, the bias of the estimator becomes smaller but then the variance increases. And when we optimize an FDR estimator with large variance in the training process (meaning the precision of FDR estimator is poor), we will lose FDR control in the validation process, just like overfitting.

We still present some empirical evidence that contextual information could help with SAFFRON too, though one has to be careful about FDR control.

\paragraph{Experiments with SAFFRON.}  We consider exactly the same setting as with the synthetic data experiments in~\secref{syndata}, and train a context-weighted SAFFRON (referred to as CwSAFFRON) in the same way as CwLORD++. With varying fraction of non-nulls,  \figref{saffron} reports the maximum FDP and statistical power (TDP) of CwSAFFRON along with three other procedures, SAFFRON, CwLORD++, and LORD++. We observe that SAFFRON and CwSAFFRON have FDR greater than the nominal level  of $0.1$ when the fraction of non-nulls $\pi_1$ is small (less than $0.3$), but is below the nominal level when $\pi_1$ gets larger. And the FDR of both LORD++ and CwLORD++ are generally much smaller than that of SAFFRON and CwSAFFRON. This is consistent with our discussion about SAFFRON in Section~\ref{sec:onlineFDR}. On the other hand, the power of CwSAFFRON dominates that of SAFFRON for all $\pi_1$, and is larger than that of CwLORD++ when $\pi_1$ exceeds $0.3$. As the fraction of non-nulls increases, CwSAFFRON achieves a faster increase in power than CwLORD++. A similar phenomenon can be also seen between SAFFRON and LORD++ (which was also noted by~\citet{ramdas2018saffron}).


\begin{figure}[H]
	\begin{tabular}{ll}
		\begin{subfigure}[b]{0.45\textwidth} 
			\centering 
			\includegraphics[width=\textwidth]{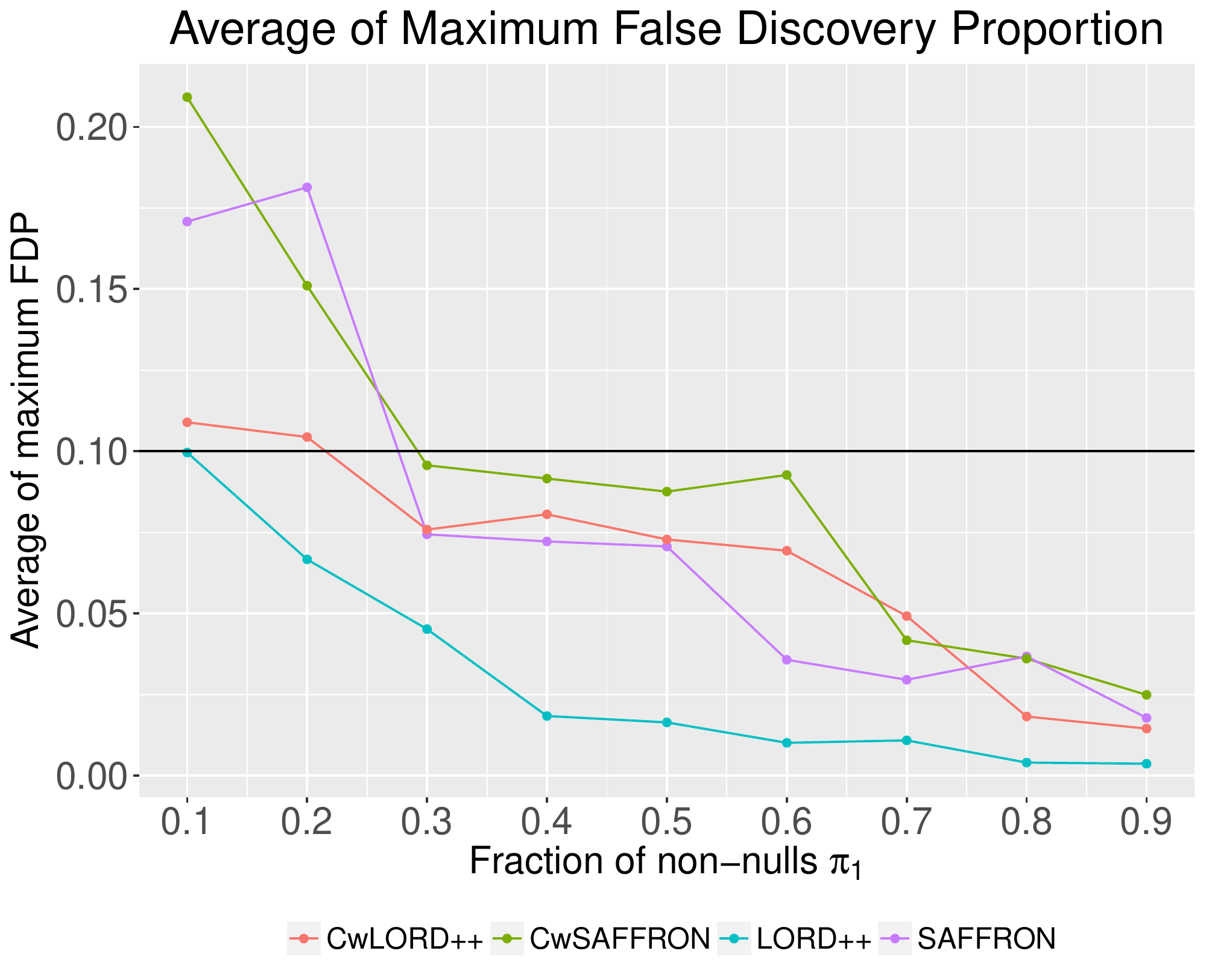} \caption{}
		\end{subfigure} & 
		\begin{subfigure}[b]{0.45\textwidth} \centering \includegraphics[width=\textwidth]{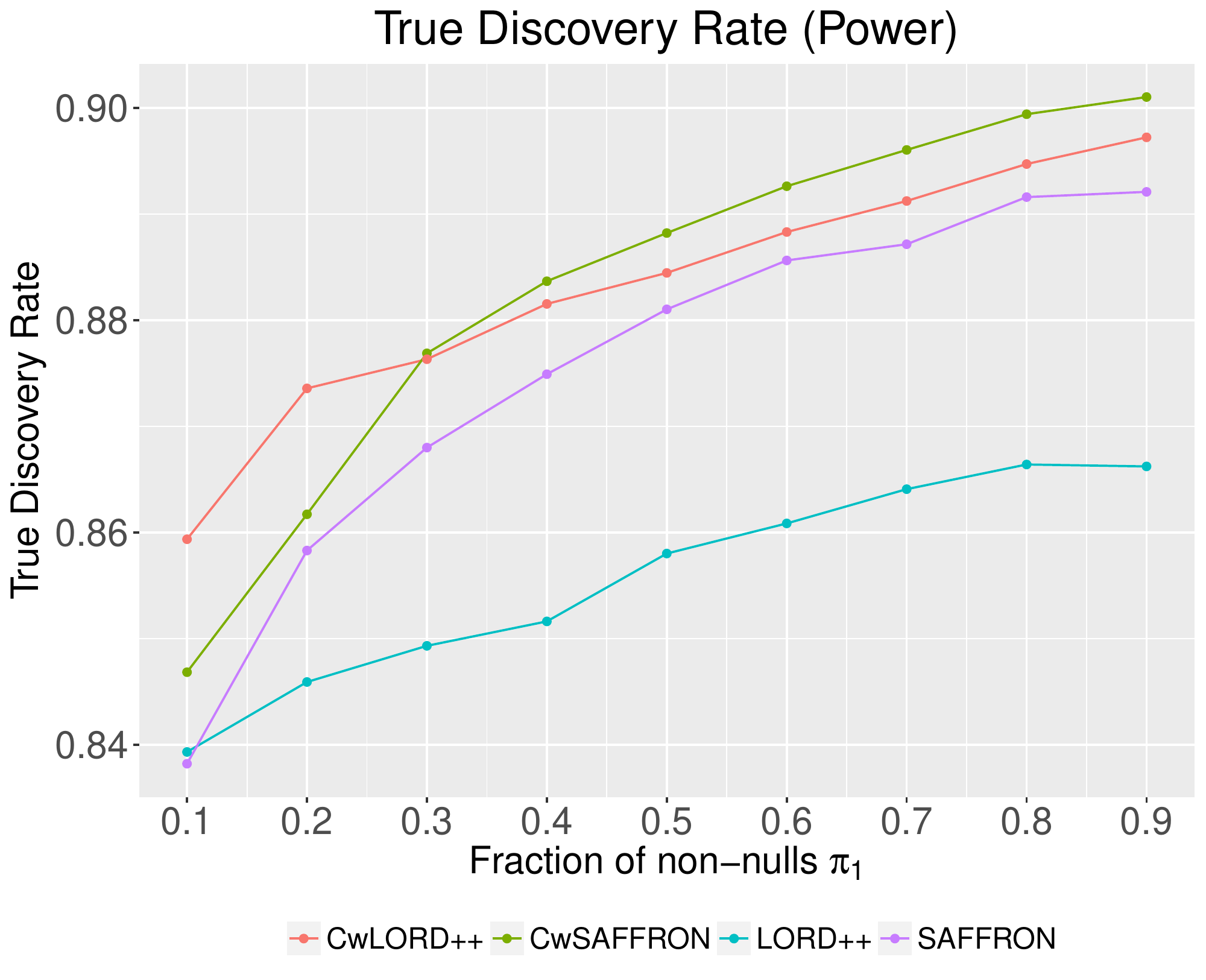}  \caption{} \end{subfigure}
	\end{tabular}
	\caption{The average of max FDP and TDR (power) for our proposed CwSAFFRON, CwLORD++, along with SAFFRON and LORD++ with varying the fraction non-nulls ($\pi_1$) under the normal means model.  The nominal FDR control level $\alpha = 0.1$. As mentioned in the description of the synthetic data experiments, the average of max FDP is an overestimate of FDR. }
	\label{fig:saffron}
	
\end{figure}

\end{document}